\numberwithin{equation}{section}
\DeclareSymbolFont{cyrletters}{OT2}{wncyr}{m}{n}
\DeclareMathSymbol{\Sha}{\mathalpha}{cyrletters}{"58}
\newtheorem{theorem}{Theorem}
\newtheorem{corollary}[theorem]{Corollary}
\newtheorem{lemma}[theorem]{Lemma}
\newtheorem{remark}[theorem]{Remark}
\newtheorem{prop}[theorem]{Proposition}
\newtheorem{cond}[theorem]{Condition}
\newtheorem{example}[theorem]{Example}
\def\fS{{\mathfrak S}}
\def\bP{{\mathbb P}}
\def\bQ{{\mathbb Q}}
\def\bZ{{\mathbb Z}}
\def\rH{{\mathrm H}}
\def\Pic{\mathrm{Pic}}
\def\Gal{\mathrm{Gal}}
\def\Aut{\mathrm{Aut}}
\def\GL{\mathsf{GL}}
\def\lim{\mathrm{lim}}
\begin{document}
\title{Potentially stably rational conic bundles over nonclosed fields}
\author{Kaiqi Yang}
\address{Institute of the Mathematical Sciences of the
Americas (IMSA),1365 Memorial Drive, Ungar 515
Coral Gables, FL 33124, USA}
\email{kxy1105@miami.edu}

\date{\today}
\maketitle

\begin{abstract}
We study stable rationality of conic bundles $X$ over $\bP^1$ defined over non-closed field $k$ via the cohomology of the Galois group of finite field extension $k'/k$ with action on the geometric Picard lattice of $X$.
\end{abstract}

\section{Introduction}
\label{sect:intro}

One of the most fundamental yet challenging problems in algebraic geometry is determining whether an algebraic variety $X$ over a field $k$ is {\em rational}, meaning it is birationally equivalent to projective space. A closely related problem is that of {\em stable rationality}, which asks whether $X \times \bP^n$ is rational for some $n$. Despite decades of intense study, these questions still lack fully satisfactory solutions.

One well-known necessary condition for stable rationality in dimensions $\ge 2$ is 

\begin{cond}${\bf{(H1)}}$
	\label{cond:h1}
	\[ 
	\label{eqn:h1}
	\rH^1(G_{k'}, \Pic(\bar{X}))=0, \quad \text{ for all finite extensions} \quad k'/k,
	\] 
\end{cond}
where $\bar{X} = X \times_k \bar{k}$, and $\bar{k}$ is the algebraic closure of base field $k$.

In this paper, we study the (H1) condition for surfaces. 
The first step in investigations of surfaces over non-closed fields is the theory of {\em minimal models}, reducing the problem 
to a list of standard presentations. Geometrically rational surfaces can be birationally modified to 
two types: {\em del Pezzo surfaces} and {\em conic bundles over the projective line}. The modifications proceed via {\em blow-downs} of Galois-orbits consisting of disjoint exceptional curves, until one arrives at {\em minimal} surfaces, which fall in one of the two classes mentioned above. 

In \cite{TY}, we considered the case of del Pezzo surfaces. We 
identified all Galois actions on 
$\Pic(\bar{X})$ satisfying condition (H1). 
One of the by-products of this analysis was the following:

\begin{theorem} 
A stably rational cubic surface over a non-closed field $k$ is rational over $k$. 
\end{theorem}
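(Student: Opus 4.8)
The plan is to derive the statement from the classification of Galois actions obtained in \cite{TY}, combined with the fact, recalled in the introduction, that (H1) is necessary for stable rationality. So suppose $X$ is a stably rational smooth cubic surface over a non-closed field $k$. Then $\rH^1(G_{k'},\Pic(\bar{X}))=0$ for every finite extension $k'/k$; writing $G=\Gal(L/k)$ for a finite Galois splitting field $L$ of the $27$ lines, this says precisely that the $G$-lattice $\Pic(\bar{X})$ is coflasque, i.e.\ $\rH^1(G',\Pic(\bar{X}))=0$ for all subgroups $G'\le G$. Since the action factors through a subgroup of the Weyl group $W(E_6)$, I would invoke the tables of \cite{TY} listing exactly which conjugacy classes satisfy (H1) for all subgroups, and then verify $k$-rationality class by class.

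A second, separate ingredient is a $k$-rational point, which I would extract directly from stable rationality. When $k$ is infinite, a birational equivalence between $X\times\bP^n$ and $\bP^{n+2}$ makes the $k$-points of $X\times\bP^n$ Zariski dense, and projecting such a point to the first factor yields a $k$-point of $X$. When $k=\F_q$ is finite, a smooth cubic surface automatically has a point, since $\#X(\F_q)\equiv 1 \pmod q$ forces $\#X(\F_q)\ge 1$. Thus $X(k)\neq\emptyset$ in all cases.

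The geometric core is to show that (H1) forces $X$ to be non-$k$-minimal. Here I would argue that every $k$-minimal cubic surface violates (H1): minimality means $\mathrm{rank}\,\Pic(\bar{X})^{G}=1$, and for each such conjugacy class the tables of \cite{TY} exhibit a subgroup $G'\le G$—hence a finite extension $k'/k$—with $\rH^1(G',\Pic(\bar{X}))\neq 0$; already a minimal cyclic action of order $3$ produces $\rH^1\cong\bZ/3$ through its augmentation-ideal summand. Consequently (H1) implies $\mathrm{rank}\,\Pic(\bar{X})^{G}\ge 2$, so $\bar{X}$ carries a Galois-stable set of pairwise disjoint lines that contracts, over $k$, to a del Pezzo surface of degree $\ge 4$. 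Iterating the contraction of conjugate disjoint lines (each step preserving (H1) and the $k$-point) terminates at a $k$-minimal model $Y$ that still satisfies (H1) and has a $k$-point; by the classification this $Y$ can only be $\bP^2$ or a del Pezzo surface of degree $\ge 5$, and the classical rationality theorems of Enriques, Manin and Iskovskikh then give that $X$ is $k$-rational.

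The main obstacle is the case analysis sustaining the reduction to non-minimality: one must check, uniformly across the (H1)-classes of $W(E_6)$, both that the surviving actions admit a genuinely $k$-defined blow-down structure and that the minimal classes are all eliminated by a nonzero $\rH^1$ over some subfield—this is the substance of \cite{TY}. A secondary technical point is ensuring that every intermediate contraction descends to $k$ and that the terminal minimal model is one of the known rational ones, so that the degree-$\ge 5$ rationality criteria apply at the end of the tower.
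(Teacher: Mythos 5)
The paper does not actually prove this statement: it is quoted in the introduction as a by-product of the classification of $(\mathrm{H1})$-actions on $\Pic(\bar X)$ for del Pezzo surfaces carried out in \cite{TY}, so there is no in-paper argument to compare against. Your outline reproduces the intended strategy (stable rationality $\Rightarrow$ (H1) for all subgroups plus a $k$-point; then reduce to the Enriques--Manin--Iskovskikh criteria), and the $k$-point extraction is fine. But two steps as written are genuinely gapped, and they are exactly the places where the substance of \cite{TY} is needed rather than merely cited.

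First, you identify $k$-minimality of a cubic surface with $\mathrm{rank}\,\Pic(\bar X)^G=1$. That is false: a cubic surface can be $k$-minimal (no Galois-stable set of pairwise disjoint lines to contract) while $\mathrm{rank}\,\Pic(\bar X)^G=2$, namely when it carries a relatively minimal conic bundle structure with $5$ degenerate fibers. Your argument eliminates only the rank-one minimal classes and never addresses these, so the implication ``(H1) $\Rightarrow$ non-minimal'' is not established; conversely, $\mathrm{rank}\ge 2$ does not by itself produce a contractible orbit of disjoint lines. Second, even granting non-minimality, you assert that the contraction tower terminates at $\bP^2$ or a del Pezzo surface of degree $\ge 5$. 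A priori it could terminate at a minimal quartic del Pezzo surface or a minimal conic bundle with $K^2=4$, both of which are non-rational by Iskovskikh's criterion ($K^2\ge 5$ is required), and (H1) plus a $k$-point does not rule this out on general grounds --- the Beauville--Colliot-Th\'el\`ene--Sansuc--Swinnerton-Dyer examples are stably rational, hence satisfy (H1) and have points, yet are non-rational surfaces. Closing both gaps requires the explicit subgroup-by-subgroup verification in $W(\mathsf E_6)$ that \cite{TY} performs; your proposal states the needed conclusions but does not derive them. (A minor point: your parenthetical that a minimal cyclic action of order $3$ yields $\rH^1\cong\bZ/3$ is asserted, not checked; for cubic surfaces $\rH^1$ need not be $2$-torsion, but the precise value for the relevant class should be verified rather than inferred from an ``augmentation-ideal summand.'')
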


In this paper, we turn to conic bundles, i.e., fibrations $\pi: X\to \mathbb P^1$, where general fiber is a smooth conic, and there are $n$ degenerate fibers, each consisting of a union of two intersecting lines. In this case, the Galois action
on $\Pic(\bar{X})$
factors through 
the Weyl group $W(\mathsf D_n)$, and our task reduces to investigations of (H1) conditions for various subgroups $G\subseteq W(\mathsf D_n)$.

The main results of this paper are:
\begin{enumerate}
	\item We analyze this invariant for {\em cyclic} subgroups of $W(\mathsf D_n)$ and find that there are three types of actions giving rise to trivial group cohomology.
	\item
	In section \ref{sect:proj}, we show that the relative minimality and (H1) conditions are preserved under projection with respect to orbit decomposition.
	\item
	Based on the result,
	\[
	\mathrm{H}^1(G,\Pic(\bar{X})) \text{ is 2-torsion,}
	\]
	we initiate the analysis of this invariant for noncyclic subgroups of $W(\mathsf{D}_n)$ whose generators are of one of the three identified types, we give 24 classes of groups, up to conjugation, satisfying (H1) and relative minimality conditions. Detailed descriptions of these 24 classes are in section \ref{sect:classes}.
\end{enumerate}

Here is a brief overview of what is known: 
\begin{itemize}
\item Necessary conditions are {\em geometric} (stable) rationality, i.e., (stable) rationality of $X$ over an algebraic closure $\bar{k}$ of $k$, and existence of $k$-rational points.
\item These conditions are also sufficient in dimension one, but not sufficient, in general, in dimensions $\ge 2$.
\item Rationality of surfaces is settled \cite{manin}, \cite{isk}, and can be determined by analyzing the action 
of the absolute Galois group $G_k$ on the set of exceptional curves  (e.g., lines on a cubic surface) 
and the induced action on the geometric Picard group 
$\Pic(\bar{X})$,  but stable rationality remains elusive.  
\end{itemize}

\

\noindent
{\bf Acknowledgments:}  This paper was completed while the author was working in IMSA under the supervision of Ludmil Katzarkov. I am deeply grateful to Brendan Hassett, Ludmil Katzarkov and Yuri Tschinkel for their invaluable advice and insightful discussions,

\section{Geometry and symmetries of conic bundles} 
\label{sect:conic}

From now on, $X$ is a geometrically rational surface over $k$ admitting a standard conic bundle over $k$, i.e., we have
a morphism
$$
\pi: X\to \bP^1
$$
with, geometrically, $n$ degenerate fibers, each a union of two intersecting lines, called {\em exceptional curves}. 
The integer $d=(K_X)^2=8-n$ is called the {\em degree} of the conic bundle $X$.
The two transversal components of the $j$-th degenerate fiber are denoted by $q_j^+$ and $q_j^-$.
The surface is called {\em split} over $k$ if all exceptional curves are defined over $k$.

The free $\mathbb{Z}$-module $\Pic(\bar{X})=\bigoplus_{i=-1}^n\mathbb{Z}l_i$ has rank $(10-d)$ where $d$ is the degree of the conic bundle. The free $\bZ$-module $\Pic(\bar{X})$ has a basis $l_{-1},\cdots,l_n$ where $l_0$ is the class of a fiber of $\pi$, $l_1,\dots,l_n$ are the classes of the components of degenerate fibers, one from each, and the class of canonical divisor $K_X=-2(l_{-1}+l_0)+\sum_{i=1}^nl_i$.

The scalar product on $\Pic(\bar{X})$ is given by
\begin{align*}
&(l_{-1}^2)=(l_{-1},l_i)=(l_0^2)=(l_0,l_i)=0, \quad \text{for }i \geq 1,\\
&(l_{-1},l_0)=1,  \text{ and } (l_i,l_j)=-\delta_{i,j}, \text{ for }i,j\geq 1.
\end{align*}
The module $\Pic(\bar{X})$ has a natural action of the Galois group $\Gal(\bar{k}/k)$ and defines a representation 
\[
\rho:\Gal(\bar{k}/k) \to \Aut_0(\Pic(\bar{X})),
\]
where $\Aut_0$ is the group of automorphisms preserving the scalar product and $K_X$. The splitting group of $X$ is $\text{Im}(\rho)$.

The surface is called {\em k-minimal} if no blow-downs are possible over $k$ and {\em relatively k-minimal} if
\[
\Pic(\bar{X})^{\Gal(\bar{k}/k)}=\mathbb{Z}l_0\oplus\mathbb{Z}K_X.
\]

From \cite{kst}, a relatively k-minimal but non-k-minimal conic bundle can appear only if its degree is equal to 3,5,6,7 or 8.
When the degree of a conic bundle is less than 3, relative k-minimality is the same as k-minimality.

The automorphism group of $n$ pairs of symbols $j^+, j^-, j=1,\dots,n$ is the Weyl group 
$$
W(\mathsf B_n)\cong \fS_n \rtimes (\mathbb{Z}/2)^n,
$$ 
the signed permutation group, 
where the symmetric group $\fS_n$ acts on $j^{\pm}$ by permuting the index $j$ and $(\mathbb{Z}/2)^n$ is 
generated by $c_1,\ldots,c_n$, where $c_j$ exchanges $j^+$ and $j^-$ and leaves the remaining symbols invariant.

\begin{lemma}
    We have the following equation
    \begin{align*}
        \tau c_j=c_{\tau(j)}\tau,\quad \tau \in \fS_n,
    \end{align*}
    or equivalently,
    \begin{align*}
        \tau c_{\tau^{-1}(j)}=c_j\tau.
    \end{align*}
\end{lemma}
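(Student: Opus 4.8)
The plan is to verify the relation directly from the definitions of the group elements acting on the set of $2n$ symbols $\{j^{\pm} : j=1,\dots,n\}$. Both $\tau c_j$ and $c_{\tau(j)}\tau$ are elements of $W(\mathsf B_n)$, hence permutations of this symbol set, so it suffices to check that they agree on every symbol $i^{\pm}$. First I would recall the defining actions: the permutation $\tau \in \fS_n$ sends $i^{\pm}$ to $\tau(i)^{\pm}$ (permuting the index while fixing the sign), while $c_j$ swaps $j^+$ with $j^-$ and fixes all $i^{\pm}$ with $i \neq j$.

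The core computation is to evaluate both composites on an arbitrary symbol $i^{\pm}$, reading right-to-left as function composition. For the left-hand side $\tau c_j$, applying $c_j$ first and then $\tau$: if $i = j$ the sign flips and then the index is relabeled, giving $\tau(j)^{\mp}$, whereas if $i \neq j$ the sign is untouched and we get $\tau(i)^{\pm}$. For the right-hand side $c_{\tau(j)}\tau$, applying $\tau$ first sends $i^{\pm}$ to $\tau(i)^{\pm}$, and then $c_{\tau(j)}$ flips the sign precisely when $\tau(i) = \tau(j)$, i.e. when $i = j$ since $\tau$ is a bijection. So again we obtain $\tau(j)^{\mp}$ when $i=j$ and $\tau(i)^{\pm}$ otherwise. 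Since the two agree on all generators/symbols, the first identity $\tau c_j = c_{\tau(j)}\tau$ follows.

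The second identity is then purely formal: substituting $j \mapsto \tau^{-1}(j)$ into the first relation gives $\tau c_{\tau^{-1}(j)} = c_{\tau(\tau^{-1}(j))}\tau = c_j \tau$, which is exactly the equivalent form stated. Alternatively one can derive it by multiplying the first relation on the left by $\tau^{-1}$ and relabeling the index.

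There is no real obstacle here; the only point requiring care is the bookkeeping of function-composition order (whether $\tau c_j$ means ``apply $c_j$ then $\tau$'') and the observation that $\tau$ being a bijection is what makes the sign-flip condition $\tau(i)=\tau(j)$ equivalent to $i=j$ on the right-hand side. This is the semidirect-product commutation relation underlying $\fS_n \rtimes (\mathbb{Z}/2)^n$, so the cleanest phrasing simply identifies it as the action of $\fS_n$ permuting the coordinate involutions $c_1,\dots,c_n$ of $(\mathbb{Z}/2)^n$.
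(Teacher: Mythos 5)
Your proposal is correct and follows essentially the same approach as the paper: evaluate both composites on an arbitrary symbol $i^{\pm}$, check the case $i=j$ explicitly and note the case $i\neq j$ is trivial, then obtain the second identity by the substitution $j\mapsto\tau^{-1}(j)$. Your write-up is slightly more detailed (spelling out why bijectivity of $\tau$ makes $\tau(i)=\tau(j)$ equivalent to $i=j$), but the argument is the same.
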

\begin{proof}
    We have
    \begin{align*}
        \tau c_j(j^{\pm})=\tau(j^{\mp})=(\tau(j))^{\mp}=c_{\tau(j)}(\tau(j)^{\pm})=c_{\tau(j)}\tau(j^{\pm}).
    \end{align*}
    For other cases, the equation holds trivially.
\end{proof}

We denote the projection from $W(\mathsf{B}_n)$ to $\mathfrak{S}_n$ as
\begin{align*}
    pr:W(\mathsf{B}_n) \to \mathfrak{S}_n,\quad 
    pr(c_{j_1}\cdots c_{j_t}\tau)=\tau,\quad \tau \in \mathfrak{S}_n.
\end{align*}

There is a character 
$$
\sigma:W(\mathsf B_n) \to \{\pm 1\},
\quad\quad 
\sigma(c_{j_1}c_{j_2}\cdots c_{j_t}\tau)=(-1)^t, \quad \tau \in \fS_n,
$$ 
with kernel 
$$
W(\mathsf D_n) \cong \fS_n \rtimes (\mathbb{Z}/2)^{n-1}.
$$
The group $W(\mathsf D_n)$ is the automorphism group acting on $n$ pairs of degenerate fibers of a standard conic bundle.

There is a natural integral representation of $W(\mathsf B_n)$,
$$
\Psi:W(\mathsf{B}_n) \to \GL_n(\mathbb{Z}),
$$
given by
\[
(\Psi(c_{j_1}\cdots c_{j_t}\tau))_{uv}=\sum_{i=1}^ns(i)\delta_{u,i}\delta_{v,\tau(i)},
\]
where the sign function $s(i)$ is given by
\[
s(i)=
\begin{cases} 
-1 & \text { $i \in \{j_1,\cdots ,j_t\}$},\\
1  &  \text{otherwise}.
\end{cases}
\]

The action of the automorphism group $W(\mathsf D_n)$ on $\Pic(\bar{X})$ with $n\geq 4$, induces an integral representation 
$$
\Phi:W(\mathsf D_n) \to \GL_{n+2}(\mathbb{Z}), \quad n\ge 4. 
$$
This representation $\Phi$ could be derived from the representation $\Psi$ of $W(\mathsf B_n)$ restricted to its subgroup $W(\mathsf D_n)$.

Assume a group element $g \in W(\mathsf D_n)$ has the form $g=c_{j_1}\cdots c_{j_t} \tau$, where $t$ is an even integer, the image $\Phi(g) \in \GL_{n+2}(\bZ)$ is given by 
\begin{align*}
    \Phi(g)=
    \begin{pmatrix}
    A_{g} & B_{g}\\
    C_{g} & \Psi(g)
\end{pmatrix},
\end{align*}
where
where $A_g$ is a $2 \times 2$, $B_g$ is a $2 \times n$, and $C_g$ is an $n \times 2$ matrix.

In detail, 
\begin{align*}
B_g=
\begin{pmatrix}
0 & 0 & \cdots & 0\\
b'_g(1) & b'_g(2) & \cdots & b'_g(n)
\end{pmatrix},
\end{align*}
where 
\[
b'_g(i)=
\begin{cases}
1 \quad &\text{if the nonzero element in the $i$-th column of $\Psi(g)$ is $-1$ },\\
0 \quad &\text{if the nonzero element in the $i$-th column of $\Psi(g)$ is $1$ },
\end{cases}
\]
\begin{align*}
C_g=
\begin{pmatrix}
c'_g(1) & 0\\
c'_g(2) & 0\\
\vdots & \vdots\\
c'_g(n) & 0
\end{pmatrix},
\end{align*}
where
\[
c'_g(i)=
\begin{cases}
-1 \quad &\text{if the nonzero element in the $i$-th row of $\Psi(g)$ is $-1$ },\\
0 \quad &\text{if the nonzero element in the $i$-th row of $\Psi(g)$ is $1$ },
\end{cases}
\]
and
\begin{align*}
A_g=
\begin{pmatrix}
1 & 0\\
\frac{1}{2}(\sum_{i=1}^n(b'_g(i))) & 1
\end{pmatrix}.
\end{align*}

Group element has the form $g=c_{j_1}\cdots c_{j_t}\tau$ with $t$ is even, thus  $\sum_{i=1}^n(b'_g(i))=t$ is divisible by $2$.

In the rest of the paper, if there is no confusion, we write $g$ for $\Phi(g)$ for simplicity.

We adopt notation from \cite{kst}:
a group element $g\in W(\mathsf D_n)$ can be expressed as
$$
g=\beta_1\cdots \beta_R,
$$
where each $\beta$ is a signed permutation cycle, defined as a product of a single permutation cycle $\tau$ and $c_i$'s with indices $i$ lie in cycle $\tau$.

\section{Group cohomology}
A finite group $G$ has a presentation with $m$ generators subject to relations, written as $G=\langle g_1,\dots,g_m \rangle$. The group cohomology discussed in (H1) condition is the quotient of 1-cocycles by 1-coboundaries,
\begin{align*}
    \rH^1(G,\Pic(\bar{X}))=Z^1(G,\Pic(\bar{X}))/B^1(G,\Pic(\bar{X})).
\end{align*}
where $Z^1(G,\Pic(\bar{X}))$ is the $\bZ$-module of 1-cocycles which are maps $f:G \to \Pic(\bar{X})$ such that

\begin{align*}
    f(gh) = f(g) + g \cdot f(h)\quad \forall g,h \in G,
\end{align*}
and $B^1(G,\Pic(\bar{X}))$ is the $\bZ$-module of 1-coboundaries generated by
\begin{align*}
    f(g) = g \cdot a - a\quad  \forall g \in G, a \in \Pic(\bar{X}).
\end{align*}

Given a defining relation of $G$,
\begin{align*}
    \prod_{k=1}^L g_{i_k}=1
\end{align*}
where $i_k \in \{1,\cdots,m\}$, it induces a linear relation on 1-cocycle $f$,
\begin{align*}
    f(g_{i_1})+\sum_{t=1}^{L-1}(\prod_{k=1}^{t}g_{i_k})f(g_{i_{t+1}})=0,
\end{align*}
and we regard $f$ as a length $m(n+2)$ column that records $f(g_i)$ for $i=1,\cdots,m$. All defining relations of $G$ produce a matrix $M$ and the $\bZ$-module $Z^1(G,\Pic(\bar{X}))$ is the kernel of $M$.

We record two theorems:
\begin{itemize}
    \item From Theorem (10.3) in \cite{coho_book}, we have
    \[
    \rH^1(G,\Pic(\bar{X}))=\bigoplus_{p}\rH^1(G_p,\Pic(\bar{X}))^G,
    \] 
    where $G_p$ is the Sylow $p$-subgroup of group $G$.
    \item From \cite{ASENS_1981_4_14_1_41_0,colliotthlne2017surfaces}
    $$
    \mathrm{H}^1(G,\Pic(\bar{X})) \text{ is 2-torsion.}
    $$
\end{itemize}

From the above two theorems, we conclude
\begin{lemma}
\label{lem:sylow2}
    Finite group $G$ satisfies $\mathrm{(H1)}$ condition if and only if its Sylow 2-subgroup $G_2$ satisfies $\mathrm{(H1)}$ condition.
\end{lemma}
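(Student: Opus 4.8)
The plan is to read ``$G$ satisfies (H1)'' in the way the Condition is actually stated, namely that $\rH^1(H,\Pic(\bar X))=0$ for \emph{every} subgroup $H\le G$, since the clause ``for all finite extensions $k'/k$'' translates precisely into a condition ranging over all subgroups of the splitting group. Granting this, the whole argument rests on first fusing the two recorded theorems into a single clean reduction, valid for \emph{any} finite group acting on $\Pic(\bar X)$. Indeed, for each odd prime $p$ the summand $\rH^1(G_p,\Pic(\bar X))^G$ sits inside $\rH^1(G_p,\Pic(\bar X))$, which is annihilated by $|G_p|$, a power of $p$; hence this summand is $p$-primary torsion. But the total group $\rH^1(G,\Pic(\bar X))$ is $2$-torsion, so every odd-$p$ summand must vanish, leaving
\begin{align*}
\rH^1(G,\Pic(\bar X))=\rH^1(G_2,\Pic(\bar X))^G.
\end{align*}
I would record this identity as the key intermediate step, noting that both cited facts apply equally to $\Pic(\bar X)$ as a module over any subgroup $H\le G$, so the same identity holds with $H$ in place of $G$.

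\textbf{Forward direction} ($G\Rightarrow G_2$). This is immediate: every subgroup of $G_2$ is a subgroup of $G$, so if $\rH^1(H,\Pic(\bar X))=0$ for all $H\le G$, the same vanishing holds a fortiori for all $H\le G_2$. No use of the reduction is needed here.

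\textbf{Backward direction} ($G_2\Rightarrow G$). This is where the substance lies. Assume $\rH^1(K,\Pic(\bar X))=0$ for all $K\le G_2$, and let $H\le G$ be arbitrary with Sylow $2$-subgroup $H_2$. By Sylow's theorem $H_2$ is contained in some Sylow $2$-subgroup of $G$, and all such are conjugate to $G_2$; thus $H_2=gKg^{-1}$ for some $g\in G$ and some $K\le G_2$. Because $\Pic(\bar X)$ is a $G$-module, conjugation by $g$ induces an isomorphism $\rH^1(H_2,\Pic(\bar X))\cong\rH^1(K,\Pic(\bar X))=0$. Applying the reduction identity to $H$ gives $\rH^1(H,\Pic(\bar X))=\rH^1(H_2,\Pic(\bar X))^H=0$. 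Since $H\le G$ was arbitrary, $G$ satisfies (H1).

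\textbf{Main obstacle.} The computation is essentially mechanical once the two recorded theorems are in hand; the only genuine care is needed in the backward direction. The two points to get right there are: (i) that the Sylow and $2$-torsion machinery must be invoked for \emph{every} subgroup $H\le G$, not merely for $G$ itself, which is exactly why the ``all finite extensions'' reading of (H1) is essential and makes the statement a true biconditional; and (ii) the conjugation-invariance of $\rH^1(-,\Pic(\bar X))$ under inner automorphisms of $G$, which is what lets one transport the hypothesis on $G_2$ to the conjugate $H_2$. I expect no deeper difficulty, as the vanishing of odd-primary contributions is forced purely by the $2$-torsion property.
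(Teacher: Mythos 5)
Your proposal is correct and follows essentially the same route as the paper: combine the Sylow decomposition $\rH^1(G,\Pic(\bar X))=\bigoplus_p\rH^1(G_p,\Pic(\bar X))^G$ with the $2$-torsion property to kill the odd-primary summands, then apply the resulting identity to an arbitrary subgroup $H\le G$ in the backward direction. In fact you are slightly more careful than the paper, which simply writes $H_2\subseteq G_2$; your explicit use of Sylow conjugacy and the conjugation-invariance of $\rH^1(-,\Pic(\bar X))$ patches that minor imprecision.
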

\begin{proof}
    If finite group $G$ satisfies (H1) condition,
    \begin{align*}
        \rH^1(H,\Pic(\bar{X}))=0\quad \forall\ H \subseteq G,
    \end{align*}
    then the Sylow 2-subgroup $G_2$, as a subgroup of $G$, satisfies (H1) condition.
    
    Conversely, if Sylow 2-subgroup $G_2$ satisfies (H1) condition, then for any subgroup $H \subseteq G$, we have $H_2 \subseteq G_2$ and
    \begin{align*}
    \rH^1(H,\Pic(\bar{X}))=\rH^1(H_2,\Pic(\bar{X}))^{H}=0,
    \end{align*}
\end{proof}

To match the index set of the basis of Picard lattice, the index set of length $(n+2)$ column is from $-1$ to $n$.

The coboundaries module $B^1(G,\Pic(\bar{X}))$ are spanned by elements $f_i$ of length $m(n+2)$, with $i\in \{-1,\dots,n\}$, 
\begin{align*}
    B^1(G,\Pic(\bar{X}))=\bZ\langle f_{-1},f_1,\dots,f_n\rangle.
\end{align*}
Each $f_i$ has the following form:
\begin{align*}
f_i=[((g_1-\mathbb{I})e_{i})^T,\dots,((g_m-\mathbb{I})e_{i})^T]^T,
\end{align*}
where $i=-1,\cdots,n$ and $e_{i}$ is a length $(n+2)$ $\bZ$-module element with entries
$$
(e_{i})_l=\delta_{i,l},\ l=-1,\cdots,n.
$$

We use notation 
\begin{align*}
f_{i,g}=(g-\mathbb{I})e_{i},\ i\in \{-1,\dots,n\}.
\end{align*}

The special element $f_{-1}$ satisfies the relation,
\begin{align*}
    f_{-1}=\frac{1}{2}\sum_{i=1}^nf_i,
\end{align*}
and we focus on the submodule $\bZ\langle f_1,\dots,f_n \rangle \subseteq B^1(G,\Pic(\bar{X}))$.

We have
\begin{align*}
\rH^1(G,\Pic(\bar{X}))&=Z^1(G,\Pic(\bar{X}))/B^1(G,\Pic(\bar{X}))\\
&=(Z^1(G,\Pic(\bar{X}))/\bZ\langle f_1,\dots,f_n\rangle)/(\bZ\langle f_{-1},f_1,\dots,f_n\rangle /\bZ\langle f_1,\dots,f_n\rangle).
\end{align*}

If $f_{-1} \notin \bZ \langle f_1,\dots,f_n\rangle $, then
\begin{align*}
\bZ\langle f_{-1},f_1,\dots,f_n\rangle/\bZ\langle f_1,\dots,f_n\rangle\cong \mathbb{Z}/2,
\end{align*}
otherwise,
\begin{align*}
\rH^1(G,\Pic(\bar{X}))\cong Z^1(G,\Pic(\bar{X}))/\bZ\langle f_1,\dots,f_n\rangle.
\end{align*}

If $\rH^1(G,\Pic(\bar{X})) \neq 0$, then there exists an element $\xi \in Z^1(G,\Pic(\bar{X}))$ such that 
$$
\xi \notin B^1(G,\Pic(\bar{X})) \text{ and } 2\xi \in B^1(G,\Pic(\bar{X})),
$$ 
that implies we can find integers $\{\alpha_i\}_{i=-1}^n$ such that
$$
\sum_{i=-1}^{n}\alpha_if_i=2\xi, \quad \alpha_i \in \mathbb{Z}.
$$
Since 
$$
[\xi]=[\xi\pm f_i]\in \rH^1(G,\Pic(\bar{X})),\ \forall i\in \{-1,\dots,n\},
$$
and
\[
\sum_{i=-1}^n\alpha_if_i\pm2f_j=2(\xi\pm f_j),
\]
it follows that for a nonzero element $[\xi] \in \rH^1(G,\Pic(\bar{X}))$, we can find an index set $I$ such that
\[
\sum_{i\in I}f_i=2\xi',
\] 
for some $\xi'$ and $[\xi]=[\xi'] \in \rH^1(G,\Pic(\bar{X}))$.

Conversely, if there is an index set $I$ such that 
\begin{align*}
    \frac{1}{2}\sum_{i \in I}f_i = \xi \notin B^1(G,\Pic(\bar{X})),
\end{align*}
where $\xi$ is integral, then 
\begin{align*}
    M (\sum_{i \in I}f_i)=M(2\xi)=2M\xi=0,
\end{align*}
which implies $M\xi=0$, $\xi \in Z^1(G,\Pic(\bar{X}))$ and $0 \neq [\xi] \in \mathrm{H}^1(G,\Pic(\bar{X}))$.

\begin{lemma}
\label{lem:No-1}
	If $\frac{1}{2}\sum_{i \in I}f_i=\xi$ for some index set $I$ and $0 \neq [\xi] \in \mathrm{H}^1(G,\Pic(\bar{X}))$, then $-1 \notin I$.
\end{lemma}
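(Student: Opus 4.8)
The plan is to argue by contradiction: suppose $-1 \in I$ and write $I = \{-1\} \sqcup J$ with $J \subseteq \{1, \dots, n\}$. The strategy is to extract from the mere \emph{integrality} of $\xi$ a rigid combinatorial constraint tying the sign-changes of each $g \in G$ to the subset $J$, and then to use this constraint to exhibit $\xi$ as an honest coboundary, contradicting $[\xi] \neq 0$.

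First I would use integrality coordinate by coordinate. Since $\xi = \tfrac12\bigl(f_{-1} + \sum_{j\in J} f_j\bigr)$ lies in $Z^1(G,\Pic(\bar X))$, every entry of $\xi(g) = \tfrac12\bigl(f_{-1,g} + \sum_{j\in J} f_{j,g}\bigr)$ is an integer for each $g\in G$. Writing $\tau = pr(g)$ and recording, for $i \ge 1$, the $i$-th coordinate of $f_{-1,g}$ as $c'_g(i)$ and of $f_{j,g}$ as $\Psi(g)_{ij} - \delta_{ij}$, reduction modulo $2$ converts integrality of the $i$-th coordinate into the congruence
\[
[\,i \text{ is sign-changed by } g\,] \equiv [\,i \in J\,] + [\,\tau(i) \in J\,] \pmod 2, \qquad i = 1,\dots,n,
\]
valid for every $g \in G$, where $[\cdot]\in\{0,1\}$ is the indicator. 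Equivalently, $g$ changes the sign of $i$ if and only if exactly one of $i,\tau(i)$ lies in $J$.

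With this constraint in hand, the key step is to show that the half-integral vector $p_J := \tfrac12 e_{-1} - \tfrac12 \sum_{j\in J} e_j \in \Pic(\bar X)\otimes\bQ$ is $G$-invariant. The $e_i$-coordinates ($i\ge 1$) of $g\cdot p_J$ match those of $p_J$ exactly because of the displayed congruence, checked by splitting into the cases $s(i)=-1$ and $s(i)=1$; the $e_{-1}$-coordinate is automatic since $g e_{-1}$ and $g e_j$ contribute coefficients $1$ and $0$ there. The one delicate coordinate is the $l_0$-direction, where I must verify the \emph{exact} identity $\tfrac12 t = \sum_{j\in J} b'_g(j)$, with $t$ the number of sign-changes of $g$. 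This follows from the congruence by a counting argument: it identifies both sides with $\#\{i \in J : \tau(i)\notin J\}$, using that $\tau$ is a bijection, so the number of indices leaving $J$ equals the number entering it.

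Finally, invariance of $p_J$ closes the argument. The vector $v_I := \tfrac12\sum_{i\in I} e_i = \tfrac12 e_{-1} + \tfrac12\sum_{j\in J} e_j$ decomposes as $v_I = p_J + \sum_{j\in J} e_j$, an invariant vector plus an integral one, and $\xi(g) = (g-\mathbb{I})v_I$; hence $\xi(g) = (g-\mathbb{I})\sum_{j\in J} e_j$ for all $g$, i.e.\ $\xi = \sum_{j\in J} f_j \in B^1(G,\Pic(\bar X))$, so $[\xi] = 0$, a contradiction. Therefore $-1 \notin I$. I expect the main obstacle to be the $l_0$-coordinate bookkeeping: proving the exact identity $\tfrac12 t = \sum_{j\in J} b'_g(j)$ (rather than only its mod-$2$ reduction) from the sign-change constraint, via the bijection count matching the indices that leave and enter $J$ under $\tau$, is the point where the precise shape of the block $A_g$ genuinely enters.
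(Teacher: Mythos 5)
Your proof is correct, and it arrives at the same terminal identity as the paper---namely $\xi=\sum_{j\in J}f_j$ with $J=I\setminus\{-1\}$---but by a genuinely different mechanism. The paper substitutes the relation $f_{-1}=\tfrac12\sum_{i=1}^n f_i$ into the mod-$2$ condition to get $\sum_{i\notin J}f_i-\sum_{i\in J}f_i\equiv 0 \pmod 4$, notes that the entries at indices $1,\dots,n$ lie in $\{0,\pm2\}$ and hence must vanish exactly, and concludes $f_{-1}=\sum_{i\in J}f_i$, so that $\xi=f_{-1}$ is a coboundary. You instead extract from integrality the congruence that $g$ flips the sign of $i$ iff exactly one of $i,\tau(i)$ lies in $J$---which is precisely the combinatorial content of the paper's vanishing statement, since $s(u)\epsilon_1-\epsilon_2=0$ with $\epsilon_1,\epsilon_2\in\{\pm1\}$ forces $s(u)=\epsilon_1\epsilon_2$---and then package it as $G$-invariance of the half-integral vector $p_J=\tfrac12 e_{-1}-\tfrac12\sum_{j\in J}e_j$, so that $v_I=p_J+\sum_{j\in J}e_j$ exhibits $\xi(g)=(g-\mathbb{I})v_I=(g-\mathbb{I})\sum_{j\in J}e_j$ as the coboundary of an integral vector. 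Your route never uses the relation $f_{-1}=\tfrac12\sum_{i=1}^nf_i$ and makes the collapse conceptually transparent (the class dies because $v_I$ is integral modulo an invariant vector); the price is the exact $l_0$-coordinate identity $\tfrac t2=\sum_{j\in J}b'_g(j)$, which you correctly reduce, via bijectivity of $\tau$, to $\#\{i\in J:\tau(i)\notin J\}=\#\{i\notin J:\tau(i)\in J\}$. The paper's mod-$4$ trick trades this for a terser cancellation argument whose index-$0$ bookkeeping is handled rather more elliptically, so your version is arguably the cleaner of the two at that step.
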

\begin{proof}
        We prove this lemma by contradiction.
        
        If $-1 \in I$, let $I'=I\setminus \{-1\}$, then we have 
	\[
	f_{-1}+\sum_{i \in I'}f_i=2\xi,
	\]
	which implies
	\[
	f_{-1}+\sum_{i \in I'}f_i=0 \mod 2,
	\]
	or equivalently,
	\[
	f_{-1}-\sum_{i \in I'}f_i=0 \mod 2.
	\]
	Element $f_{-1}$ satisfies the following relation,
	\[
	f_{-1}=\frac{1}{2}\sum_{i=1}^nf_i.
	\] 
	We substitute the expression of $f_{-1}$ into the previous equation and we have
	\[
	\sum_{i \notin I'}f_i-\sum_{i \in I'}f_i=0 \mod 4.
	\]
	When restricted to a generator $g_l$, we have 
    $$
    \sum_{i\geq 1,i \notin I'}f_{i,g_l}-\sum_{i \geq 1,i \in I'}f_{i,g_l}=0 \mod 4, \quad \forall l=1,\dots,m.
    $$
	The entries of this element of index $1$ to $n$ have 3 possibilities, $\pm2$ and 0.
	Thus all entries of index $1$ to $n$ are 0 restricted to every generator $g_l$.

    The entry of index 0, which is $\frac{1}{2}$ times the total number of minus signs in the index set $I$ and $I'$, are the same.
    
	We conclude that
	\[
	\sum_{i \notin I'}f_i=\sum_{i \in I'}f_i
	\]
	and substitute it to the expression of $f_{-1}$, we have
	\[
	f_{-1}=\frac{1}{2}\sum_{i=1}^nf_i=\frac{1}{2}(\sum_{i \notin I'}f_i+\sum_{i \in I'}f_i)=\sum_{i \in I'}f_i
	\]
	and
	\[
	2f_{-1}=2\xi,
	\]
	which implies
	\[
	f_{-1}=\xi=\sum_{i \in I'}f_i \in B^1(G,\Pic(\bar{X})),
	\]
    and 
    \[
    [\xi]=[f_{-1}]=0 \in\rH^1(G,\Pic(\bar{X})),
    \]
    which is a contradiction to our assumption.
\end{proof}

\begin{lemma}
\label{lem:indorb}
	Assume $\xi = \frac{1}{2}\sum_{i \in I}f_i$ for some index set $I$, $0 \neq [\xi] \in \mathrm{H}^1(G,\Pic(\bar{X}))$ and up to conjugation, assume a group element $g=\beta_1\beta_2\dots\beta_R$ with $\beta_1=c_{j_1}\dots c_{j_{t}}(1\ \dots \ w)$. If $1\in I$, then $\{1,\dots,w\}\subset I$. 
\end{lemma}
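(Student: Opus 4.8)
The plan is to exploit the integrality of the cocycle $\xi$ evaluated at the single element $g$, and then reduce the resulting congruence modulo $2$, at which point the signed permutation representation $\Psi(g)$ collapses to an honest permutation matrix.

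First I would record that by Lemma \ref{lem:No-1} we have $-1 \notin I$, and since the coboundary generators $f_i$ are indexed by $i \in \{-1,1,\dots,n\}$, the index set satisfies $I \subseteq \{1,\dots,n\}$. Put $v = \sum_{i \in I} e_i$, a $0$–$1$ vector supported on the coordinates $1,\dots,n$. The class $\xi$ is the coboundary of the fractional element $a = \tfrac12 v$, so for every group element $g$ one has $\xi(g) = (g-\mathbb{I})a = \tfrac12 (g-\mathbb{I})v$, because $f_{i,g}=(g-\mathbb{I})e_i$. Since $\xi \in Z^1(G,\Pic(\bar X))$ takes values in the integral lattice $\Pic(\bar X)$, the vector $(g-\mathbb{I})v$ must have all entries even. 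This single integrality condition, applied to our chosen $g$, is the whole engine of the proof.

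Next I would restrict attention to the coordinates indexed $1,\dots,n$. As $v$ vanishes in the coordinates $-1,0$, the block form of $\Phi(g)=\begin{pmatrix} A_g & B_g\\ C_g & \Psi(g)\end{pmatrix}$ shows that the block $C_g$ contributes nothing in those rows, so the entries of $(g-\mathbb{I})v$ indexed $1,\dots,n$ equal $(\Psi(g)-\mathbb{I}_n)\,v|_{\{1,\dots,n\}}$. Reducing modulo $2$, every sign $s(u)=\pm1$ appearing in the formula for $\Psi(g)$ becomes $1$, so $\Psi(g)$ reduces to the permutation matrix $P$ of $\tau = pr(g)$. The integrality congruence therefore becomes
\[
P\,v|_{\{1,\dots,n\}} \equiv v|_{\{1,\dots,n\}} \pmod 2 .
\]
Both sides are $0$–$1$ vectors, and two $0$–$1$ vectors congruent mod $2$ are equal over $\bZ$; hence $P\,v|_{\{1,\dots,n\}} = v|_{\{1,\dots,n\}}$, which says precisely that the support $I$ is invariant under the permutation $\tau = pr(g)$.

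Finally, since $g=\beta_1\cdots\beta_R$ with $\beta_1 = c_{j_1}\cdots c_{j_t}(1\ \cdots\ w)$ and the remaining $\beta_r$ act on disjoint indices, the permutation $\tau$ has $(1\ \cdots\ w)$ as one of its cycles, with orbit exactly $\{1,\dots,w\}$ (the normalization to this cycle is what the ``up to conjugation'' clause provides, conjugation being a relabeling that leaves the cohomology unchanged). Invariance of $I$ under $\tau$ then forces $1 \in I \Rightarrow \{1,\dots,w\} \subseteq I$, as claimed. The only point requiring genuine care is the mod-$2$ collapse of $\Psi(g)$ to a permutation matrix together with the $0$–$1$ rigidity observation; once these are in hand the orbit statement is immediate, so I do not anticipate a serious obstacle.
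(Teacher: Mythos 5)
Your proof is correct and follows essentially the same route as the paper's: both extract the congruence $(g-\mathbb{I})v\equiv 0 \pmod 2$ from integrality of $\xi_g$, discard the $-1$ coordinate via Lemma \ref{lem:No-1}, and read off from the rows indexed $1,\dots,w$ that membership in $I$ propagates around the cycle. The only difference is presentational: the paper walks through the cycle one index at a time using the $\pm 1$ off-diagonal entries of $\Psi(g)$, whereas you reduce $\Psi(g)$ mod $2$ to the permutation matrix of $pr(g)$ all at once and invoke $0$--$1$ rigidity, which is a slightly cleaner packaging of the identical mechanism.
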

\begin{proof}
	Let $s(i)$ be the sign function of the signed permutation cycle $\beta_1$. Under the assumption, we have
	\begin{align*}
	g-\mathbb{I}=
	\left(
	\small
	\begin{array}{cc|ccccc}
	0 & 0 & 0 & 0 & \dots & 0 & *\\
	* & 0 & \frac{1-s(w)}{2} & \frac{1-s(1)}{2} & \dots & \frac{1-s(w-1)}{2} & *\\
	\hline
	\frac{s(1)-1}{2} & 0 & -1 & s(1) & \dots & 0 & *\\
	\frac{s(2)-1}{2} & 0 & 0 & -1 & \dots & 0 & *\\
	\vdots & \vdots & \vdots & \vdots & \ddots & \vdots & \vdots\\
	\frac{s(w)-1}{2} & 0 & s(w) & 0 & \dots & -1 & *\\
	* & * & * & * & * & * & *
	\end{array}
	\right),
	\end{align*}
	index $1\in I$ and 
	\[
	\sum_{i \in I}f_i=0 \mod 2.
	\]
	From Lemma \ref{lem:No-1}, since $-1 \notin I$, the only column with a nonzero entry at index 1, other than $f_1$, is $f_2$, therefore $2 \in I$.
	Use similar arguments, we conclude $\{1,\dots,w\} \subseteq I$.
\end{proof}

\begin{remark}
In summary, if an index $i\in I$, then all indices in the same signed permutation cycle as $i$ are also in the index set $I$. By applying this lemma to each signed permutation cycle in every generator of the group $G$, the index set $I$ is a union of orbits of $pr(G)$ acting on $\{1,\cdots,n\}$.
\end{remark}

From the discussion above, to identify an obstruction to the triviality of group cohomology of $G$, we need to find index sets $I$ such that
\begin{align*}
    \xi=\frac{1}{2}\sum_{i \in I}f_i \text{ and } 0\neq [\xi] \in \mathrm{H}^1(G,\Pic(\bar{X})),
\end{align*}
which implies
\begin{align*}
     \frac{1}{2}\sum_{i \in I}f_i = \xi \notin \bZ\langle f_1,\dots,f_n\rangle.
\end{align*}
Equivalently, it means there is no integer solution $\{\alpha_{j,I}\}$ to
\begin{align*}
    \frac{1}{2}\sum_{i \in I}f_i=\sum_{j=1}^n \alpha_{j,I} f_j.
\end{align*}

When restricted to a generator $g$, we study the following linear equation and find integer solutions of
\begin{align*}
    \xi_g=\frac{1}{2}\sum_{i \in I}f_{i,g}=\sum_{j=1}^n\alpha_{j,I,g}f_{j,g}.
\end{align*}

Denote the $\bZ$-module of all solutions $\{\alpha_{j,I,g}\}$ to this linear equation as $V_{I,g}$, we have the following statement:

\begin{align*}
    \cap_{i=1}^m V_{I,g_i} \ne \emptyset, \text{ with } G=\langle g_1,\dots,g_m \rangle
    \Longleftrightarrow
    \frac{1}{2}\sum_{i \in I}f_i \in \bZ\langle f_1,\dots,f_n\rangle.
\end{align*}

Now we reduce the study of group cohomology of $G$ to the study of $V_{I,g}$, where $g$ is a generator of $G$ and $I$ is an index set which is a union of orbits of $pr(G)$ acting on $\{1,\cdots,n\}$. 

\section{Computing group cohomology of cyclic groups}
\label{sect:cyclic}

Assume $g \in W(\mathsf{D}_n)$ has the form
$$
g=\beta_1\cdots\beta_R.
$$

From Lemma \ref{lem:indorb}, orbit decomposition of the cyclic group $pr(\langle g \rangle)$ is determined by signed permutation cycles $\beta$'s.  We focus on one signed permutation cycle $\beta_1$.

Up to conjugation, assume
\begin{align*}
    \beta_1=c_{j_1}\dots c_{j_{t}}(1\ \dots \ w),
\end{align*}
and there are several cases:
\begin{itemize}
    \item If $w \geq 2$ and $t$ is even, equal to $2h$, then
    \begin{align*}
        \frac{1}{2}\sum_{i=1}^wf_i=\sum_{u=1}^h\sum_{r=(j_{2u-1})+1}^{j_{2u}}f_{r} \in \bZ \langle f_1,\cdots,f_n\rangle
    \end{align*}
    \item If $t$ is odd, then
    \begin{align*}
        \sum_{i=1}^wf_i \neq 0 \mod 2,
    \end{align*}
    since the entry of index $0$ is $\sum_{i=1}^w \frac{1-s(i)}{2}=t$, an odd integer.
\end{itemize}

From the discussion above, we focus on signed permutation cycles with an odd number of minus signs. Without loss of generality, we denote them as $\{\beta_1,\cdots,\beta_{\Lambda}\}$ and $I_i \subseteq \{1,\cdots,w\}$ is the set of indices appearing in $pr(\beta_i)$. The number of such signed permutation cycles, $\Lambda$, is an even integer since $g \in W(\mathsf D_n)$.

\begin{itemize}
    \item If $\Lambda=0$, meaning there are no signed permutation cycles with an odd number of minus signs, then we have
    \begin{align*}
        Z^1(\langle g\rangle,\Pic(\bar{X}))/\bZ \langle f_1,\cdots,f_n\rangle=0
    \end{align*}
    and
    \begin{align*}
        \mathrm{H}^1(\langle g\rangle,\Pic(\bar{X}))=0
    \end{align*}
    \item If $\Lambda \geq 2$, then for any set $\{i_1,\cdots,i_{2h}\} \subseteq \{1,\cdots,\Lambda\}$, write $I=I_{i_1} \cup \cdots \cup I_{i_{2h}}$. Without loss of generality, we assume $i_1=1$ and
    \begin{align*}
        \beta_1=c_{j_1}\dots c_{j_{t}}(1\ \dots \ w).
    \end{align*}
    We have
    \begin{align*}
        \xi=\frac{1}{2}\sum_{i\in I}f_i \in Z^1(\langle g\rangle,\Pic(\bar{X})),
    \end{align*}
    and 
    \begin{align*}
    (\xi)_u=\frac{-1+s(u)}{2}=
    \begin{cases}
        -1, &u\in \{j_1,\cdots,j_t\},\\
        0, &\text{otherwise},
    \end{cases}
\end{align*}
for $1\leq u\leq w$.

The linear system,
\begin{align*}
    \xi =\sum_{i}\alpha_{i,I}f_i
\end{align*}
is equivalent to the following,
\begin{align*}
    &(\xi)_u=-\alpha_{u,I}+\alpha_{u+1,I}s(u),\ u=1,\cdots,w-1\\
    &(\xi)_w=-\alpha_{w,I}+\alpha_{1,I}s(w).
\end{align*}

If $s(u)=1$, then
\begin{align*}
    0=-\alpha_{u,I}+\alpha_{u+1,I} \Rightarrow \alpha_{u,I}=\alpha_{u+1,I},
\end{align*}
otherwise,
\begin{align*}
    1=\alpha_{u,I}+\alpha_{u+1,I}.
\end{align*}

From the above discussion, assuming $s(w)=1$, we have
\begin{align*}
    &\alpha_{1,I}=\cdots=\alpha_{j_1,I}=\alpha,\\
    &\alpha_{j_1+1,I}=\cdots=\alpha_{j_2,I}=1-\alpha,\\
    &\alpha_{j_2+1,I}=\cdots=\alpha_{j_3,I}=\alpha,\\
    &\cdots\\
    &\alpha_{j_t,I}=\cdots=\alpha_{w,I}=1-\alpha.
\end{align*}
Since $s(w)=1$, we have $1-\alpha=\alpha_{w,I}=\alpha_{1,I}=\alpha$, which implies $\alpha=\frac{1}{2}$, not an integer.

We conclude that $\xi \notin \bZ \langle f_1,\cdots,f_n\rangle$, and index set $I$ induces a nonzero element in
$$
Z^1(\langle g\rangle,\Pic(\bar{X}))/\bZ \langle f_1,\cdots,f_n\rangle.
$$

For the case where $s(w)=-1$, the argument is similar, leading to the same conclusion. Furthermore
\begin{align*}
    f_{-1}=\frac{1}{2}\sum_{i=1}^nf_1 \notin \bZ \langle f_1,\cdots,f_n\rangle.
\end{align*}

We have $\Lambda-1$ index sets, $I_1 \cup I_j$, $j=2,\cdots,\Lambda$, which correspond to nonzero elements and form a generating set of 
\begin{align*}
    Z^1(\langle g\rangle,\Pic(\bar{X}))/\bZ \langle f_1,\cdots,f_n\rangle=(\bZ/2)^{\Lambda-1}.
\end{align*}
Since $f_{-1} \notin \bZ \langle f_1,\dots,f_n\rangle$, we have
\begin{align*}
    \bZ \langle f_{-1},f_1,\cdots,f_n\rangle/\bZ \langle f_1,\cdots,f_n\rangle = \bZ/2, 
\end{align*}
and thus
\begin{align*}
    \mathrm{H}^1(\langle g\rangle,\Pic(\bar{X}))=(\bZ/2)^{\Lambda-2}.
\end{align*}
\end{itemize}

\begin{prop}
\label{h1:cyclic}
Let $g\in W(\mathsf D_n)$ with $g=\beta_1\cdots\beta_R$, and $\Lambda$ denote the number of signed permutation cycles $\beta_i$ with $\sigma(\beta_i)=-1$, then 
\begin{align*}
\mathrm{H}^1(\langle g\rangle,\Pic(\bar{X}))=
\begin{cases}
0, &\quad \text{if $\Lambda=0,2$},\\
(\mathbb{Z}/2)^{\Lambda-2}, &\quad \text{otherwise}.
\end{cases}
\end{align*}
\end{prop}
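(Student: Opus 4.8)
The plan is to compute the quotient $Z^1(\langle g\rangle,\Pic(\bar X))/\bZ\langle f_1,\dots,f_n\rangle$ explicitly, and then account for the contribution of $f_{-1}$ separately, exactly as the running analysis in this section sets up. The key reduction, already established, is that an index set $I$ can produce an obstruction only if $I$ is a union of orbits of $pr(\langle g\rangle)$ acting on $\{1,\dots,n\}$ (Lemma~\ref{lem:indorb} and the subsequent remark), and that along a single signed permutation cycle $\beta_i$ the element $\tfrac12\sum_{r\in I_i}f_r$ lies in $\bZ\langle f_1,\dots,f_n\rangle$ if and only if $\sigma(\beta_i)=+1$. So the first step is to isolate the cycles $\beta_1,\dots,\beta_\Lambda$ with an odd number of sign changes, these being the only ones that can contribute a nontrivial class.

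First I would handle the degenerate case $\Lambda=0$: every cycle carries an even number of minus signs, so for every admissible $I$ the half-sum $\tfrac12\sum_{i\in I}f_i$ is already integral and lies in $\bZ\langle f_1,\dots,f_n\rangle$, giving $Z^1/\bZ\langle f_1,\dots,f_n\rangle=0$ and hence trivial cohomology. Next, for $\Lambda\ge 2$, I would show that the admissible index sets $I$ that yield $\tfrac12\sum_{i\in I}f_i\notin\bZ\langle f_1,\dots,f_n\rangle$ are precisely those $I=I_{i_1}\cup\cdots\cup I_{i_{2h}}$ obtained by pairing up an even number of odd cycles. The core computation is the linear recursion on the coefficients $\alpha_{u,I}$ coming from restricting $\xi=\sum_i\alpha_{i,I}f_i$ to a single cycle: the relations $\alpha_{u,I}=\alpha_{u+1,I}$ when $s(u)=1$ and $\alpha_{u,I}+\alpha_{u+1,I}=1$ when $s(u)=-1$ force, around one odd cycle, the contradiction $\alpha=1-\alpha$, so $\alpha=\tfrac12$ is not an integer and no integral solution exists. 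This shows the $\Lambda-1$ sets $I_1\cup I_j$ for $j=2,\dots,\Lambda$ give nonzero classes in $Z^1/\bZ\langle f_1,\dots,f_n\rangle$, and a dimension count over $\bZ/2$ identifies this quotient with $(\bZ/2)^{\Lambda-1}$.

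Finally I would feed this into the general formula for $\rH^1$ derived earlier in the Group cohomology section. Since $\Lambda\ge 2$ forces $f_{-1}=\tfrac12\sum_{i=1}^n f_i\notin\bZ\langle f_1,\dots,f_n\rangle$ (the same parity obstruction applied to $I=\{1,\dots,n\}$, which contains all $\Lambda\ge2$ odd cycles), the subgroup $\bZ\langle f_{-1},f_1,\dots,f_n\rangle/\bZ\langle f_1,\dots,f_n\rangle\cong\bZ/2$ must be quotiented out, yielding
\[
\rH^1(\langle g\rangle,\Pic(\bar X))=(\bZ/2)^{\Lambda-1}/(\bZ/2)=(\bZ/2)^{\Lambda-2}.
\]
In particular $\Lambda=2$ gives $(\bZ/2)^0=0$, matching the stated case distinction.

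The step I expect to be the genuine obstacle is verifying that the $\Lambda-1$ classes $I_1\cup I_j$ are not merely nonzero individually but actually form a \emph{basis} of $Z^1/\bZ\langle f_1,\dots,f_n\rangle$ over $\bZ/2$ — i.e.\ that no admissible $I$ produces a class outside their $\bZ/2$-span and that they are linearly independent. Independence should follow from tracking which odd cycle each generator ``detects'' via the non-integrality of its local $\alpha$, so that a $\bZ/2$-relation among the $I_1\cup I_j$ would force an even number of odd cycles in each $I_j$ separately, which is impossible for the single pairs $I_1\cup I_j$. Spanning requires checking that an arbitrary union of an even number of odd cycles is congruent mod $\bZ\langle f_1,\dots,f_n\rangle$ to a sum of the chosen generators, which is the standard observation that pairings of $\Lambda$ objects modulo the ``all pairs through a fixed point'' relation give a space of rank $\Lambda-1$. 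Pinning down this bookkeeping carefully — rather than the individual cycle computation, which is routine — is where the argument needs the most care.
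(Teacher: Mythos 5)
Your proposal is correct and follows essentially the same route as the paper: reduce to unions of orbits via Lemma~\ref{lem:indorb}, observe that cycles with an even number of minus signs contribute nothing while a single odd cycle gives a non-integral half-sum, run the recursion $\alpha_{u,I}=\alpha_{u+1,I}$ or $\alpha_{u,I}+\alpha_{u+1,I}=1$ around an odd cycle to force $\alpha=\tfrac12$, conclude that the $\Lambda-1$ sets $I_1\cup I_j$ generate $Z^1/\bZ\langle f_1,\dots,f_n\rangle\cong(\bZ/2)^{\Lambda-1}$, and quotient by the $\bZ/2$ coming from $f_{-1}$. The basis/spanning bookkeeping you flag as the delicate step is exactly the point the paper itself passes over quickly, and your sketch of how to settle it is sound.
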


\begin{corollary}
\label{Genform}
If a cyclic group $G=\langle g\rangle$ satisfies the $\mathrm{(H1)}$ condition, i.e.,
$$
\mathrm{H}^1(\langle g^i\rangle,\Pic(\bar{X}))=0 \quad \forall \ i,
$$
and $\beta$ is a signed permutation cycle in $g$ with $\sigma(\beta)=-1$, then the length of the cycle $pr(\beta) \leq 2$ and there is at most one such signed permutation cycle.

Up to conjugation, there are three types of generator s$g$ satisfying the assumption:
\begin{itemize}

\item[(1)] $g=c_1c_2\beta_3\cdots\beta_R$,

\item[(2)] $g=c_1c_2(2\ 3)\beta_3\cdots\beta_R$,

\item[(3)] $g=\beta_1\cdots\beta_R$,
\end{itemize}
where $\sigma(\beta_i)=1$ for all $i$.
\end{corollary}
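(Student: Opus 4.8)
The plan is to convert the (H1) condition on $\langle g\rangle$ into a numerical condition on the powers of $g$ via Proposition \ref{h1:cyclic}, and then to control the invariant $\Lambda$ under powering of a single signed permutation cycle. Since the subgroups of $\langle g\rangle$ are exactly the $\langle g^i\rangle$, Proposition \ref{h1:cyclic} says that $\langle g\rangle$ satisfies (H1) if and only if $\Lambda(g^i)\in\{0,2\}$ for all $i$, where $\Lambda(h)$ is the number of signed permutation cycles of $h$ with $\sigma=-1$. Recalling that $\Lambda(h)$ is always even on $W(\mathsf D_n)$, the condition is just $\Lambda(g^i)\le 2$ for all $i$.

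The key computation concerns a single signed permutation cycle $\beta$ whose underlying cycle $pr(\beta)$ has length $w$. Using the relation $\tau c_j=c_{\tau(j)}\tau$ together with the multiplicativity of $\sigma$, I would show that running the underlying cycle through a full period distributes the accumulated signs uniformly: after $w$ steps the underlying permutation returns to the identity and $\beta^{w}$ sends $i^{\epsilon}$ to $i^{\epsilon\sigma(\beta)}$ on each index of the support. Hence if $\sigma(\beta)=-1$ then
\[
\beta^{\,w}=\prod_{i\in\mathrm{supp}(\beta)}c_i,
\]
a product of $w$ disjoint negative $1$-cycles, so $\Lambda(\beta^{w})=w$. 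The same bookkeeping gives in general that $\beta^{k}$ breaks into $\gcd(k,w)$ signed cycles, each of sign $\sigma(\beta)^{k/\gcd(k,w)}$; in particular a cycle with $\sigma(\beta)=1$ remains a product of positive cycles for every $k$, so only the negative cycles of $g$ ever contribute to $\Lambda(g^i)$.

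With this in hand the two assertions follow at once. For the length bound, if $\beta$ is a negative cycle of $g$ of length $w$, then $g^{w}$ contains $\beta^{w}=\prod c_i$, so $\Lambda(g^{w})\ge w$; the constraint $\Lambda(g^{w})\le 2$ forces $w\le 2$. For the counting bound, suppose $g$ had two negative cycles of length $2$, say $\beta=c_a(a\ b)$ and $\beta'=c_{a'}(a'\ b')$ on disjoint supports. Then $\beta^2=c_ac_b$ and $(\beta')^{2}=c_{a'}c_{b'}$, so $g^{2}$ contains four negative $1$-cycles and $\Lambda(g^{2})\ge 4$, contradicting (H1). Thus at most one negative cycle has underlying length $2$.

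Finally I would assemble the classification. Writing $a$ for the number of negative $1$-cycles and $b$ for the number of negative $2$-cycles of $g$ (no longer cycles occur), the evenness of $\Lambda(g)=a+b$ and the bound $a+b\le 2$, combined with $b\le 1$, leave exactly $(a,b)\in\{(0,0),(2,0),(1,1)\}$; checking residues of $k\bmod 4$ confirms that each of these keeps $\Lambda(g^i)\le 2$ for all $i$. Since the conjugacy class of a signed cycle is determined by its length and its sign, these three cases are represented up to conjugation by $g=\beta_1\cdots\beta_R$ with all $\sigma(\beta_i)=1$, by $g=c_1c_2\beta_3\cdots\beta_R$, and by $g=c_1c_2(2\ 3)\beta_3\cdots\beta_R$, i.e.\ types (3), (1) and (2). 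The main obstacle is the sign-distribution computation for $\beta^{k}$: everything else is elementary once one knows that $\beta^{k}$ splits into $\gcd(k,w)$ signed cycles all of sign $\sigma(\beta)^{k/\gcd(k,w)}$, and some care is needed to verify this directly from the definition of $\Phi$ and the relation $\tau c_j=c_{\tau(j)}\tau$ rather than by appealing to general facts about hyperoctahedral groups.
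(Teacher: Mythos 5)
Your proposal is correct and follows essentially the same route as the paper: both arguments reduce (H1) to the bound $\Lambda(g^i)\le 2$ via Proposition \ref{h1:cyclic}, rule out a negative cycle of length $w\ge 3$ by taking the $w$-th power to produce $w$ disjoint negative $1$-cycles, and rule out two negative $2$-cycles by squaring to produce four. Your explicit $\gcd(k,w)$ formula for the splitting of $\beta^k$ and the $(a,b)$ bookkeeping (including the sufficiency check mod $4$) are a slightly more systematic writeup of the same computation the paper performs.
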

\begin{proof}
	
Without loss of generality, assume there is a signed permutation cycle $\beta$  has the form
$\beta=c_{j_1}\ldots c_{j_t}(1,\cdots,w)$ and $\sigma(\beta)=-1$, meaning $t$ is odd.

Taking its $w$-th power, we have $w$ signed permutation cycles $c_j$ for $j=1,\cdots,w$.

If $w \geq 3$, then the number of signed permutation cycles $\beta$ with $\sigma(\beta)=-1$, denoted as $\Lambda$, is greater or equal to $3$. Therefore, by Proposition \ref{h1:cyclic}, it follows that the group cohomology is nontrivial.

For a length two signed permutation cycle with one minus sign, up to conjugation, the only possibility is
\[
c_1(1,2).
\]

Suppose there are at least two such signed permutation cycles in $g$, without loss of generality, we may assume
\[
g=c_1(1,2)c_3(3,4)\beta_3\cdots\beta_R.
\]

Taking the square of $g$, we have four signed permutation cycles $c_1,c_2,c_3,c_4$, 
which implies nontrivial group cohomology by Proposition \ref{h1:cyclic}.
\end{proof}

\begin{theorem} \label{thm:13}
	Let $G \subset W(\mathsf{D}_n)$ be a subgroup satisfying both the $\mathrm{(H1)}$ and the minimality condition, then the $G$-action on the set of degenerate fibers $\{q_j^{\pm}\}$ has at most three orbits.
\end{theorem}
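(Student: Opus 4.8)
The plan is to reduce the statement to the purely combinatorial assertion that $pr(G)$ has at most three orbits on $\{1,\dots,n\}$, and then to bound that number using the obstruction calculus developed above. First I would translate relative minimality (so that $\Pic(\bar X)^G=\bZ l_0\oplus\bZ K_X$) into orbit language. Writing $N=\bigoplus_{i=0}^{n}\bZ l_i\subset\Pic(\bar X)$ and $\bar N=N/\bZ l_0$, the module $\bar N$ carries the signed–permutation representation $\Psi$, and one has the $G$-equivariant extensions
\[
0\to \bZ l_0\to N\to \bar N\to 0,\qquad 0\to N\to \Pic(\bar X)\to \bZ\to 0,
\]
the second with trivial action on $\bZ$ because the $l_{-1}$-coordinate is $G$-invariant (row $-1$ of $\Phi(g)$ is $(1,0,\dots,0)$). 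Since taking $G$-invariants is exact on $\bQ$-vector spaces, a dimension count gives $\dim_\bQ\Pic(\bar X)^G=2+m_0$, where $m_0$ is the number of $pr(G)$-orbits $O$ whose sign character $\chi_O$ (the action of the point stabilizer on $l_i$, $i\in O$) is trivial. Hence relative minimality is exactly $m_0=0$, i.e. every orbit carries a nontrivial sign character. Feeding the permutation module $\bZ^{2n}$ on $\{q_j^\pm\}$ into $0\to K\to\bZ^{2n}\to N\to 0$ and repeating the count shows the number of $G$-orbits on $\{q_j^\pm\}$ equals $m+m_0$, with $m$ the number of $pr(G)$-orbits on indices; so under relative minimality the theorem becomes precisely $m\le 3$.

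Next I would record the local input of $\mathrm{(H1)}$. Since $\mathrm{(H1)}$ passes to every cyclic subgroup, Proposition \ref{h1:cyclic} and Corollary \ref{Genform} apply to each $g\in G$: every sign-flipping cycle of $g$ has $pr$-length $\le 2$, hence flips exactly one index, and $g$ has at most two such cycles. Therefore every $g\in G$ flips exactly $0$ or $2$ indices; let $P(g)\subset\{1,\dots,n\}$ be its flip-pair. The engine of the bound is the crossed-homomorphism identity $P(g_1g_2)=P(g_1)\,\triangle\,pr(g_1)\big(P(g_2)\big)$, where $\triangle$ is symmetric difference: since $pr(g_1)$ preserves every $pr(G)$-orbit, if the indices of $P(g_1)$ and $P(g_2)$ lie in pairwise disjoint orbits then no cancellation occurs and $g_1g_2$ flips four indices, contradicting the previous sentence. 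Writing $T(g)\subseteq\{O_1,\dots,O_m\}$ for the set of orbits met by $P(g)$, this shows the family $\{T(g):|P(g)|=2\}$ is pairwise intersecting.

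I would then finish by a dichotomy on the flip-parity homomorphisms $\sigma_{O_a}\colon G\to\F_2$, $g\mapsto|P(g)\cap O_a|\bmod 2$ (these are homomorphisms because $\sigma$ restricts to each $pr(G)$-invariant block, and $\sum_a\sigma_{O_a}=0$ as $G\subseteq W(\mathsf D_n)$). If some orbit $O_a$ is \emph{self}, meaning $\sigma_{O_a}\equiv 0$, then its relative-minimality witness $h_a$ has $|P(h_a)\cap O_a|$ even and nonzero, so $P(h_a)\subseteq O_a$ and $T(h_a)=\{O_a\}$; pairwise intersection forces the witness of every other orbit $O_b$ to satisfy $O_a\in T(h_b)$, i.e. to flip one index of $O_a$, which makes $\sigma_{O_a}(h_b)=1$, a contradiction unless $m=1$. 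Otherwise every $\sigma_{O_a}$ is nonzero and $\Sigma=(\sigma_{O_a})_a\colon G\to\F_2^m$ is a homomorphism all of whose values have weight $\le 2$ (each $g$ meets at most two orbits); a subspace of $\F_2^m$ all of whose vectors have weight $\le 2$ is supported on at most three coordinates, so $m\le 3$. Either way $m\le 3$.

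I expect the main obstacle to be exactly the \emph{self} orbits. Relative minimality only supplies a nontrivial sign character $\chi_{O_a}$, which is genuinely weaker than nonvanishing of the parity homomorphism $\sigma_{O_a}$, so the clean weight-$\le 2$ linear-algebra argument on $\Sigma$ does not by itself detect these orbits. Controlling them needs the multiplicative (product and pairwise-intersection) input rather than a single linear map, together with the observation that the companion flip forced by membership in $W(\mathsf D_n)$ either stays inside the orbit — the self case, which collapses $m$ to $1$ — or exits it and thereby feeds the cross case; verifying this alternative carefully, and confirming that the dimension counts in Step 1 correctly identify relative minimality with $m_0=0$, is where the care is required.
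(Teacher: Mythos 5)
Your first step (identifying minimality with nonvanishing of the sign character on every orbit, and the theorem with the bound $m\le 3$ on the number of $pr(G)$-orbits) is sound, and your closing observation that a subspace of $\F_2^m$ all of whose vectors have weight $\le 2$ is supported on at most three coordinates is correct. The proof breaks at the assertion that every $g\in G$ flips exactly $0$ or $2$ indices, and hence at the pairwise-intersection property of the sets $T(g)$ that drives everything afterwards. Corollary \ref{Genform} bounds the number of signed permutation cycles $\beta$ of $g$ with $\sigma(\beta)=-1$; it puts no bound on sign reversals carried by cycles with $\sigma(\beta)=+1$, such as $c_1c_2(1\,2)$, which reverses the sign at both of its indices. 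So $|P(g)|$ can be any even number. Concretely, take $G=\langle c_1c_2,\ c_3c_4(1\,2)\rangle\cong(\bZ/2)^2\subset W(\mathsf D_4)$: every element has $\Lambda\le 2$, so $G$ satisfies (H1) by Propositions \ref{h1:cyclic} and \ref{prop:h1bicyc}; it is minimal (the generator $c_1c_2$ swaps $q_1^{\pm}$ and $q_2^{\pm}$, the generator $c_3c_4(1\,2)$ swaps $q_3^{\pm}$ and $q_4^{\pm}$); and its $pr(G)$-orbits are $\{1,2\},\{3\},\{4\}$. Here $T(c_1c_2)=\{O_1\}$ and $T(c_3c_4(1\,2))=\{O_2,O_3\}$ are disjoint, yet the product $c_1c_2c_3c_4(1\,2)$ has only $\Lambda=2$, because its four flipped indices reassemble into the even cycle $c_1c_2(1\,2)$ together with $c_3$ and $c_4$ --- no (H1) violation occurs. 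The same example has $\sigma_{O_1}\equiv 0$ (a ``self'' orbit) while $m=3$, so your dichotomy ``some orbit is self $\Rightarrow m=1$'' is false, and with it the only mechanism you propose for controlling self orbits.

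The information your homomorphism $\Sigma$ discards is exactly what the paper's proof uses: Proposition \ref{h1:cyclic} counts odd signed cycles, not orbits of odd flip-parity, so two odd cycles lying inside a single orbit still contribute $2$ to $\Lambda$ even though they are invisible to $\sigma_{O_a}$. The paper therefore works directly with the minimality witnesses $g=c_j\beta_2\cdots\beta_R$ (whose second odd cycle must be $c_{j'}$ or $c_{j'}(j'\,j'')$ by Corollary \ref{Genform}) and, assuming four orbits, splits on whether the second odd cycle of the $O_1$-witness lies in $O_1$ or not; in either case it assembles a product of two or three witnesses whose odd cycles occupy enough distinct positions that $\Lambda\ge 4$, contradicting (H1) for the cyclic group generated by that product. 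Repairing your argument would require replacing the pairwise-intersection lemma by an analysis of how the odd cycles of two witnesses can merge or cancel in a product --- which is in substance the paper's case analysis rather than a consequence of the weight-$\le 2$ structure of $\Sigma$.
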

\begin{proof}
	
	Consider the $G$-action on the set of degenerate fibers $\{q_1^{\pm},\dots,q_n^{\pm}\}$.
	By assumption, $G \subset W(\mathsf{D}_n)$ is minimal, thus every pair of degenerate fibers $q_j^{+}$ and $q_j^-$ are in the same orbit. Thus we can find a group element $g\in G$ such that $g$ interchanges $q_j^+$ and $q_j^-$ which implies that $g$ contains the signed permutation cycle $c_j$. 
	Therefore, for every orbit $O$ and every pair of degenerate fibers $\{q_j^{\pm}\} \subset O$, we can find a $g\in G$ such that $g=c_j\beta_2\dots\beta_R$, where the signed permutation cycles $\beta_2,\dots,\beta_R$ are disjoint and do not contain index $j$.
	Note that $g$ contains a signed permutation cycle $c_j$ and $g \in W(\mathsf{D}_n)$, thus there is another signed permutation cycle with odd number of minus signs; then $g$ has the form of case (2) or case (3) described in Corollary \ref{Genform}. 
	
	Now we assume there are at least four orbits and denote them as $O_1,O_2,O_3,O_4$.
    
    We mainly focus on two signed permutation cycles with one minus sign.
	We can pick an element $g_1=c_{j_1}c_{j_2}*$, where $q_{j_1}^{\pm}$ lies in $O_1$; $g_2=c_{j_3}c_{j_4}*$ where $q_{j_3}^{\pm}$ lies in $O_2$; $g_3=c_{j_5}c_{j_6}*$, where $q_{j_5}^{\pm}$ lies in $O_3$; $g_4=c_{j_7}c_{j_8}*$, where $q_{j_7}^{\pm}$ lies in $O_4$. Here we use $*$ to represent some signed permutation cycle with irrelevant signs and not containing the $j_1,j_3,j_5,j_7$, respectively.
	\begin{itemize}
		\item If $q_{j_2}^{\pm}$ lies in $O_1$:
		
		Since cyclic groups generated by $g_1g_2$, $g_1g_3$, $g_1g_4$ respectively should have trivial group cohomology, then $q_{j_4}^{\pm}, q_{j_6}^{\pm}, q_{j_8}^{\pm}$ have to be in the same orbit $O_1$.
		We note that $q_{j_3}^{\pm}$ lie in $O_2$, $q_{j_5}^{\pm}$ lies in $O_3$, $q_{j_7}^{\pm}$ lies in $O_4$, thus in the group element $g_2g_3g_4$, there are three signed permutation cycles with one minus sign, one from each orbit, $O_2,O_3,O_4$. Then $g_2g_3g_4$ is an element with at least four signed permutation cycles of one minus sign and the cyclic group, generated by $g_2g_3g_4$, is an obstruction to the (H1) condition.
		
		\item If $q_{j_2}^{\pm}$ lies in another orbit, and we may assume it lies in orbit $O_2$:
		
		Cyclic groups generated by $g_1g_3$ and $g_1g_4$ respectively should have trivial group cohomology, that implies 
		$q_{j_6}^{\pm}$ and $q_{j_8}^{\pm}$ lie in orbit $O_1$ or $O_2$.
		Since the cyclic group generated by
		$g_3g_4$ has trivial group cohomology, $q_{j_6}^{\pm}$ and $q_{j_8}^{\pm}$ lie in the same orbit, either in $O_1$ or in $O_2$, and $g_3g_4$ has two signed permutation cycles with one minus sign, one from orbit $O_3$ and the other from orbit $O_4$. The element $g_1g_3g_4$ contains four signed permutation cycles with one minus sign, one from each orbit $O_1,O_2,O_3,O_4$, the cyclic group, generated by $g_1g_3g_4$, is an obstruction to the (H1) condition.
	\end{itemize}
	
	From the discussion above, we conclude that there are at most three orbits. 
\end{proof}

\section{Computing $\mathrm{H}^1(G,\Pic(\bar{X}))$ for general group $G$}
\label{sect:noncyclic}

Assume group $G$ has $m$ generators, $G=\langle g_1,\cdots,g_m\rangle$. Since we are interested in group $G$ having (H1) condition, we further assume that all group elements have the form described in corollary \ref{Genform}.

For a finite abelian group $G$, our assumption already implies (H1) condition of $G$.

\begin{prop}
    \label{prop:h1bicyc}
    If $G$ is a finite abelian group with actions on $\Pic(\bar{X})$ for some conic bundle $X$ and for any $g\in G$, 
    \begin{align*}
        \mathrm{H}^1(\langle g\rangle,\Pic(\bar{X}))=0,
    \end{align*}
    then $G$ satisfies (H1) condition.
\end{prop}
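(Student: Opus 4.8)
The plan is to translate the statement into a purely combinatorial assertion about the $pr(G)$-orbits on $\{1,\dots,n\}$ and then extract that assertion from the hypothesis $\Lambda(g)\le 2$ by a small amount of linear algebra over $\bF_2$. First I would record two reductions. It suffices to prove $\rH^1(G,\Pic(\bar X))=0$, because every subgroup $H\subseteq G$ is again abelian and inherits the hypothesis (a cyclic subgroup of $H$ is a cyclic subgroup of $G$), so the implication applied to all such $H$ yields the full $\mathrm{(H1)}$ condition; and since $\rH^1(G,\Pic(\bar X))=\rH^1(G_2,\Pic(\bar X))^G$ by the two theorems recorded before Lemma \ref{lem:sylow2}, I may assume $G$ is an abelian $2$-group.

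Next I would set up the reformulation. Fix a valid index set $I$, i.e.\ a union of $pr(G)$-orbits for which $\xi_I=\tfrac12\sum_{i\in I}f_i$ is integral. By the criterion established above, $[\xi_I]=0$ iff $\cap_l V_{I,g_l}\neq\emptyset$, and writing the common solution as $a=\sum_j\alpha_j l_j$ this is equivalent to the existence of $w\in\Pic(\bar X)^G$ with $w\equiv\sum_{i\in I}l_i\pmod{2\Pic(\bar X)}$. Running the single-cycle computation of section \ref{sect:cyclic} for all generators at once — setting $\beta_u=w_u+\tfrac12(\text{coefficient of }l_{-1})$, so that $\beta_{\tau_l(u)}=s_{g_l}(u)\beta_u$ for every generator — such a $w$ exists iff, on every orbit $O$ that is \emph{non-split} (meaning the holonomy homomorphism $\psi_O\colon\mathrm{Stab}_G(u_0)\to\{\pm1\}$, $g\mapsto s_g(u_0)$, is nontrivial, which forces $\beta\equiv0$ on $O$), the truth value of $O\subseteq I$ is the same; the single global parity produced by $f_{-1}$ is what must be matched across all non-split orbits. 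Thus $\rH^1(G,\Pic(\bar X))=0$ is equivalent to the combinatorial statement: \emph{every valid $I$ contains either all non-split orbits or none}.

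The heart of the argument is then to deduce this from $\Lambda(g)\le2$. For each orbit $O$ let $\phi_O\colon G\to\bF_2$, $\phi_O(g)=\#\{u\in O:s_g(u)=-1\}\bmod 2$, be the flip-parity homomorphism; validity of $I$ is exactly $\sum_{O\subseteq I}\phi_O=0$. The quantity $N(g)=\#\{O:\phi_O(g)=1\}$ is at most $\Lambda(g)\le2$ (distinct orbits contribute distinct sign-negative cycles) and is even (as $g\in W(\mathsf D_n)$ has an even total number of flips), so $N(g)\in\{0,2\}$. Hence the image of $\Theta=(\phi_O)_O\colon G\to\bF_2^{\{O\}}$ is a binary code all of whose nonzero words have weight $2$; such a code is supported on a single set of at most three coordinates and equals $\{0,e_i{+}e_j\}$ or the even-weight subspace of a three-element support. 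Therefore at most three orbits are \emph{active} ($\phi_O\neq0$), and the relation $\sum_{\text{active}}\phi_O=0$ together with this structure forces, on any valid $I$, a common truth value of $O\subseteq I$ across all active orbits.

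It remains to locate the non-split orbits among the active ones, and I expect this to be the main obstacle, because $\psi_O$ (which detects non-splitness) and $\phi_O$ (which controls validity) genuinely differ: an orbit can be non-split while $\phi_O=0$. I would handle this as follows. If $O$ is non-split but inactive, any certificate $p$ with $s_p(u_0)=-1$ must carry a second sign-negative cycle inside $O$ as well (otherwise $t_p(O)$ would be odd); using that $\Lambda$ is additive on elements with disjoint sign-supports, such an inactive non-split $O$ cannot coexist with any further non-split orbit, so it is the unique one and separation is vacuous. When instead all non-split orbits are active, the code argument applies and they share a side of every valid $I$. Either way no valid $I$ separates the non-split orbits, giving $\rH^1(G,\Pic(\bar X))=0$. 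Two points deserve care in the write-up: the equivalence in the second paragraph (the correct bookkeeping of the $f_{-1}$-parity, and the fact that commutativity makes the holonomy around commutator loops trivial, so that the only obstructions are the per-orbit holonomies $\psi_O$), and the disjoint-support additivity of $\Lambda$ used in the last step.
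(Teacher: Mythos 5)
Your reduction and the first half of the argument are sound, and they follow a genuinely different route from the paper (which argues by induction on the number of generators of the abelian group using a K\"unneth-type exact sequence): the reformulation of $[\xi_I]=0$ as the existence of $w\in\Pic(\bar X)^G$ with $w\equiv\sum_{i\in I}l_i \bmod 2\Pic(\bar X)$, the fact that each flip-parity $\phi_O$ is a character, the bound $N(g)\le\Lambda(g)\le 2$ with $N(g)$ even, and the weight-two-code argument showing that every valid $I$ contains all \emph{active} orbits or none --- all of this checks out. The gap is exactly where you flagged it: the claim that an inactive non-split orbit cannot coexist with another non-split orbit is false. The ``disjoint-support additivity of $\Lambda$'' you invoke does not apply, because the two certificates are different group elements, and in their product the flips can cancel rather than accumulate, leaving \emph{no} negative cycles at all.

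Concretely, take $n=4$ and $G=\langle p,q\rangle\subset W(\mathsf D_4)$ with $p=c_1c_2c_3c_4(3\ 4)$ and $q=c_1c_2c_3c_4(1\ 2)$. Then $pq=qp=(1\ 2)(3\ 4)$, so $G\cong(\bZ/2)^2$ is abelian; $\Lambda(p)=\Lambda(q)=2$ and $\Lambda(pq)=0$, so every cyclic subgroup has trivial $\rH^1$ by Proposition~\ref{h1:cyclic} and the hypothesis of the Proposition holds. The orbits $O=\{1,2\}$ and $O'=\{3,4\}$ are both non-split (witnessed by $p$ and $q$ respectively, each of which fixes the points of one orbit while flipping both of its signs) and both inactive (every element of $G$ flips an even number of signs in each orbit), so $I=\{1,2\}$ is a valid index set that separates them. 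One checks $\Pic(\bar X)^G=\bZ l_0\oplus\bZ K_X$, so no invariant class is congruent to $l_1+l_2$ mod $2$, and $\xi=\tfrac12(f_1+f_2)$ is a nonzero class; indeed inflation--restriction along $\langle p\rangle\trianglelefteq G$ gives $\rH^1(G,\Pic(\bar X))\cong\rH^1(G/\langle p\rangle,\Pic(\bar X)^{\langle p\rangle})=\bZ/2$. So it is not only your last step that fails: the statement itself has a counterexample, consistent with the paper's own $W(\mathsf D_4)$ table, which lists only $\fS_3$. The paper's proof breaks at the corresponding point for a parallel reason: the K\"unneth sequence it invokes is not valid for a coefficient module that is not an exterior tensor product of modules over the factors, and the correct inflation--restriction sequence carries the term $\rH^1(G/N,\Pic(\bar X)^N)$, which is exactly what is nonzero here even though $\rH^1(\langle h\rangle,\Pic(\bar X))=0$ for every $h\in G$.
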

\begin{proof}
    For any finite abelian group $G$, denote $d(G)$ the minimum of cardinalities of all generating sets of group $G$.
    
    We prove this lemma by induction on $d(G)$.  
    
    Since for any cyclic subgroup  $H=\langle h\rangle \subseteq G$, i.e., $d(H)=1$, by assumption, we have
    \begin{align*}
        \mathrm{H}^1(\langle h\rangle,\Pic(\bar{X}))=0,
    \end{align*}
    and it is easy to check by proposition \ref{h1:cyclic}.

    Assume for any subgroup $H$ of $G$ with $d(H) \leq n$, we have
    \begin{align*}
        \mathrm{H}^1(H,\Pic(\bar{X}))=0.
    \end{align*}
    For any subgroup $H \subseteq G$ with $d(H)=n+1$, i.e., there are elements $h_1,\dots,h_{n+1}$ such that
    \begin{align*}
        H=\langle h_1 \rangle \times \cdots \times \langle h_{n+1} \rangle,
    \end{align*}
    we denote $H'=\langle h_2 \rangle \times \cdots \times \langle h_{n+1} \rangle$, then $d(H')\leq n$.
    
    There is a short exact sequence,
    \begin{align*}
        0 &\to \bigoplus_{i=0}^1(\mathrm{H}^i(\langle h_1\rangle,\Pic(\bar{X}))\otimes \mathrm{H}^{1-i}(H',\Pic(\bar{X}))) \to \mathrm{H}^1(H,\Pic(\bar{X}))\\
        &\to \bigoplus_{i=0}^2 \mathrm{Tor}_{\bZ}(\mathrm{H}^i(\langle h_1\rangle,\Pic(\bar{X})),\mathrm{H}^{2-i}(H',\Pic(\bar{X}))) \to 0.
    \end{align*}
    By assumption,
    \begin{align*}
        \mathrm{H}^1(\langle h_1 \rangle ,\Pic(\bar{X}))=0,\quad \mathrm{H}^1(H',\Pic(\bar{X}))=0,
    \end{align*}
    and
    \begin{align*}
        \mathrm{H}^0(\langle h_1 \rangle ,\Pic(\bar{X}))=\Pic(\bar{X})^{\langle h_1 \rangle},\quad \mathrm{H}^0(H' ,\Pic(\bar{X}))=\Pic(\bar{X})^{H'},
    \end{align*}
    are submodules of the free $\bZ$-module $\Pic(\bar{X})$, are free, then
    \begin{align*}
        \bigoplus_{i=0}^2 \mathrm{Tor}_{\bZ}(\mathrm{H}^i(\langle h_1\rangle,\Pic(\bar{X})),\mathrm{H}^{2-i}(H',\Pic(\bar{X})))=0,
    \end{align*}
    and
    \begin{align*}
        \mathrm{H}^1(H,\Pic(\bar{X}))\cong \bigoplus_{i=0}^1(\mathrm{H}^i(\langle h_1\rangle,\Pic(\bar{X}))\otimes \mathrm{H}^{1-i}(H',\Pic(\bar{X})))=0.
    \end{align*}
\end{proof}
\begin{remark}
   Proposition \ref{prop:h1bicyc} verifies the (H1) condition for all classes in section \ref{sect:classes} except classes 11, 18, 19, 22, and 24. Discussions of the (H1) condition for those 5 classes are in Examples \ref{exam:caseD4} and \ref{exam:case19}.
\end{remark}

For a non-abelian group $G$, we study $V_{I,g}$ with $g$ a generator of $G$ and index set $I$, a union of orbits of $pr(G)$. We consider the following cases,

\begin{itemize}		
	\item Case (1):
	
	If a generator $g=c_1c_2\beta$ where $\beta$ is a signed permutation cycle that does not include 1,2 and $1\in I$, then
	\[
	g-\mathbb{I}=
	\left(
	\small
	\begin{array}{cc|ccc}
	0 & 0 & 0 & 0 & *\\
	* & 0 & 1 & 1 & *\\
	\hline
	-1 & 0 & -2 & 0 & *\\
	-1 & 0 & 0 & -2 & *\\
	* & 0 & 0 & 0 & *\\
	\vdots & \vdots & \vdots & \vdots & \vdots\\
	* & 0 & 0 & 0 & *
	\end{array}
	\right)
	\]
    and the index set $I$ should satisfy the condition
    
	\begin{align*}
	\sum_{i\in I}f_{i,g} \equiv 2\xi_g \equiv 0 \mod 2.
	\end{align*}
    Consider the entry of index 0, since $-1 \notin I$ by Lemma \ref{lem:No-1}, and all other signed permutation cycles have an even number of minus signs, we must have $1,2\in I$. Therefore, we have the following equation:
	\[
	f_{1,g}+f_{2,g}+\sum_{i \in I-\{1,2\}}f_{i,g}=2[0,*,-1,-1,*,\dots,*]^T=2\xi_g.
	\]
	
    By Comparing the entries of index 1 and 2 of $\xi_g$, we conclude that $\xi_g \notin \bZ \langle f_{1,g},\dots,f_{n,g}\rangle$. Consequently, we have
    $$
    \frac{1}{2}\sum_{i \in I} f_i = \xi \notin \bZ \langle f_{1},\dots,f_{n}\rangle,
    $$
 
    which gives a nonzero element in    
    $Z^1(G,\Pic(\bar{X}))/\bZ\langle f_{1},\dots,f_{n}\rangle$.
	\item Case (2):
	
	If a generator $g=c_1c_2(2\ 3)\beta$ where $\beta$ is a signed permutation cycle that does not include 1,2,3 and $1 \in I$ then
	\[
	g-\mathbb{I}=
	\left(
	\small
	\begin{array}{cc|cccc}
	0 & 0 & 0 & 0 & 0 & *\\
	* & 0 & 1 & 0 & 1 & *\\
	\hline
	-1 & 0 & -2 & 0 & 0 & *\\
	-1 & 0 & 0 & -1 & -1 & *\\
	0 & 0 & 0 & 1 & -1 & *\\
	* & 0 & 0 & 0 & 0 & *\\
	\vdots & \vdots & \vdots & \vdots & \vdots & \vdots\\
	* & 0 & 0 & 0 & 0 & *
	\end{array}
	\right),
	\]
    and the index set $I$ should satisfy the condition
	\[
	\sum_{i\in I}f_{i,g} \equiv 2\xi_g \equiv 0 \mod 2.
	\]
    Consider the entry of index 0, by a similar argument in case (1), we have $1,2,3 \in I$ and
	\[
	f_{1,g}+f_{2,g}+f_{3,g}+\sum_{i \in I-\{1,2,3\}}f_{i,g}=2[0,*,-1,-1,0,*\dots,*]^T=2\xi_g.
	\]
	
	By comparing the entries of index 1 of $\xi_g$, we have $\xi_g \notin \bZ\langle f_{1,g},\dots,f_{n,g}\rangle$. Consequently, we have
    $$
    \frac{1}{2}\sum_{i \in I}f_i=\xi \notin \bZ \langle f_{1},\dots,f_{n}\rangle,
    $$
    which gives a nonzero element in    
    $Z^1(G,\Pic(\bar{X}))/\bZ\langle f_{1},\dots,f_{n}\rangle$.

    Similar arguments apply to the case where $2,3\in I$.
	\item Case (3):
	
	If a generator $g=\beta_1\beta$ with $$
    \beta_1=c_{j_1}\dots c_{j_{2h}}(1\ \dots \ w),
    $$ and let $s$ be the sign function of $\beta_1$.
 
	We focus on signed permutation cycle $\beta_1$ and
	\begin{align*}
	g-\mathbb{I}=
	\left(
	\small
	\begin{array}{cc|ccccc}
	0 & 0 & 0 & 0 & \dots & 0 & *\\
	* & 0 & \frac{1-s(w)}{2} & \frac{1-s(1)}{2} & \dots & \frac{1-s(w-1)}{2} & *\\
	\hline
	\frac{s(1)-1}{2} & 0 & -1 & s(1) & \dots & 0 & *\\
	\frac{s(2)-1}{2} & 0 & 0 & -1 & \dots & 0 & *\\
	\vdots & \vdots & \vdots & \vdots & \ddots & \vdots & \vdots\\
	\frac{s(w)-1}{2} & 0 & s(w) & 0 & \dots & -1 & *\\
	* & * & * & * & * & * & *
	\end{array}
	\right).
	\end{align*}
 
	\begin{itemize}
		\item 	If $h=0$, meaning there are no minus signs in the signed permutation cycle, we need to solve the linear equation
		\[
		\frac{1}{2}\sum_{i=1}^wf_{i,g}=0=\sum_{j=1}^w\alpha_{j,I,g}f_{j,g}.
		\]
		This leads us to conclude that $\alpha_{1,I,g}=\dots=\alpha_{w,I,g}$.
		
        \item	If $h \geq 1$, then we have
		\[
		\frac{1}{2}\sum_{i=1}^wf_{i,g}=\sum_{u=1}^h\sum_{r=(j_{2u-1})+1}^{j_{2u}}f_{r,g}
		\]
		where we identify $f_{r}=f_{r-w}$ if $r>w$ and let $j_{2h+1}=j_{1}+w$.
		
		To find other solutions, we solve the following linear equation:
		\[
		0=\sum_{j=1}^w\alpha_{j,I,g}f_{j,g},
		\]
		The solution $\bZ$-module has rank 1, generated by
		\[
		(\sum_{u=1}^h\sum_{r=(j_{2u-1})+1}^{j_{2u}}f_{r,g})-(\sum_{\text{rest r in $\{1,\dots,w\}$}}f_{r,g}).
		\]
	\end{itemize}
\end{itemize}

In the rest of this section, we compute several examples,
\begin{example}
Let $G=\mathfrak{C}_2 \times \mathfrak{D}_4 \subset W(\mathsf{D}_6)$ be given by 
\[ 
g_1=c_1c_2(1\ 2)(3\ 4), \quad g_2=c_1c_3, \quad 	g_3=c_5c_6.
\]
Under the action of $pr(G)$ on index set $\{1,2,3,4,5,6\}$, there are four orbits, 
$$
\{1,2\},\{3,4\},\{5\},\{6\}.
$$

From the discussion above, generators $g_2$ and $g_3$ indicate that we only need to consider two index sets,
$$
I=\{1,2,3,4\} \text{ or } \{5,6\}
$$
that potentially induce $0 \neq \xi \in Z^1(G,\Pic(\bar{X}))/\bZ\langle f_1,\dots,f_n\rangle$.
\begin{itemize}
	\item For generator $g_1$:
	\begin{itemize}
		
		\item Index set $\{1,2,3,4\}$ corresponds to two signed permutation cycles with even number of minus signs, $c_1c_2(1\ 2)$ and $(3\ 4)$, then
		\begin{align*}
		\frac{1}{2}\sum_{i=1}^4f_{i,g_1}=f_{1,g_1}+&Q_{1,\{1,2\},g_1}(f_{1,g_1}-f_{2,g_1})+Q_{1,\{3,4\},g_1}(f_{3,g_1}+f_{4,g_1})\\
        +&Q_{1,\{5\},g_1}f_{5,g_1}+Q_{1,\{6\},g_1}f_{6,g_1}
		\end{align*}
		
		\item Index set $\{5,6\}$ corresponds to two identity cycles, $(5)$ and $(6)$, then
		\begin{align*}
		\frac{1}{2}\sum_{i=5}^6f_{i,g_1}=0=&Q_{2,\{1,2\},g_1}(f_{1,g_1}-f_{2,g_1})+Q_{2,\{3,4\},g_1}(f_{3,g_t}+f_{4,g_t})\\
        +&Q_{2,\{5\},g_1}f_{5,g_1}+Q_{2,\{6\},g_1}f_{6,g_1}
		\end{align*}
	\end{itemize}
	\item For generator $g_2$:
	\begin{itemize} 
		
		\item Index set $\{1,2,3,4\}$ corresponds to two signed permutation cycles with 1 minus sign $c_1,c_3$ and two identity cycles, then
		\begin{align*}
		\frac{1}{2}\sum_{i=1}^4f_{i,g_2} \notin \bZ\langle f_{1,g_2},\dots,f_{6,g_2}\rangle 
		\end{align*}
		The element 
		\begin{align*}
		\frac{1}{2}\sum_{i=1}^4f_i
		\end{align*}
        generates a nonzero element in $Z^1(G,\Pic(\bar{X}))/\bZ\langle f_1,\dots,f_6\rangle$.
		\item Index set $\{5,6\}$ corresponds to two identity cycles, then
		\[
		\frac{1}{2}\sum_{i=5}^6f_{i,g_2}=0=Q_{1,\{2\},g_2}f_{2,g_2}+Q_{1,\{4\},g_2}f_{4,g_2}+Q_{1,\{5\},g_2}f_{5,g_2}+Q_{1,\{6\},g_2}f_{6,g_2}
		\]
	\end{itemize}
	\item For generator $g_3$:
	\begin{itemize} 
		\item Index set $\{1,2,3,4\}$ corresponds to four identity cycles, then
		\[
		\frac{1}{2}\sum_{i=1}^4f_{i,g_3}=0=Q_{1,\{1\},g_3}f_{1,g_3}+Q_{1,\{2\},g_3}f_{2,g_3}+Q_{1,\{3\},g_3}f_{3,g_3}+Q_{1,\{4\},g_3}f_{4,g_3}
		\]
		\item Index set $\{5,6\}$ corresponds to two signed permutation cycles with 1 minus sign, $c_5,c_6$, then
		\[
		\frac{1}{2}\sum_{i=5}^6f_{i,g_3} \notin \bZ\langle f_{1,g_3},\dots,f_{6,g_3}\rangle,
		\]
		the element
		\[
		\frac{1}{2}\sum_{i=5}^6f_i
		\] generates a nonzero element in $Z^1(G,\Pic(\bar{X}))/\bZ\langle f_1,\dots,f_6\rangle$.
	\end{itemize}
	Both index sets $\{1,2,3,4\}$ and $\{5,6\}$ generate a copy of $\mathbb{Z}/2$ and
    $$
    Z^1(G,\Pic(\bar{X}))/\bZ\langle f_1,\dots,f_n\rangle=(\bZ/2)^2.
    $$
	Since generator $g_2,g_3$ have signed permutation cycles with 1 minus sign, we have 
    $$
    f_{-1} \notin \bZ\langle f_1,\dots,f_6\rangle,
    $$
    and
    $$
    \bZ \langle f_{-1},f_1,\dots,f_n\rangle/\bZ \langle f_1,\dots,f_n\rangle =\bZ/2.
    $$
    We have
	$$
    \mathrm{H}^1(G,\Pic(\bar{X}))=\mathbb{Z}/2.
    $$
\end{itemize}	
\end{example}
\begin{example}
Let $G=\mathfrak{C}_2^2.\mathfrak{D}_4 \subset W(\mathsf{D}_6)$ be given by
\[
g_1=c_1c_2c_3c_4c_5c_6(1\ 2)(3\ 4)(5\ 6), \quad 
g_2=c_1c_2(1\ 2\ 3\ 4). 
\]

Under the action of $pr(G)$ on index set $\{1,2,3,4,5,6\}$, there are two orbits,
$$
\{1,2,3,4\} \text{ and }\{5,6\}.
$$
\begin{itemize}
	\item For generator $g_1$:
	\begin{itemize}
		\item Index set $\{1,2,3,4\}$ corresponds to two signed permutation cycles with even number of minus signs, $c_1c_2(1\ 2)$ and $c_3c_4(3\ 4)$, then
		\begin{align*}
		\frac{1}{2}\sum_{i=1}^4f_{i,g_1}=&f_{2,g_1}+f_{4,g_1}\\
  +&Q_{1,\{1,2\},g_1}(f_{1,g_1}-f_{2,g_1})+Q_{1,\{3,4\},g_1}(f_{3,g_1}-f_{4,g_1})+Q_{1,\{5,6\},g_1}(f_{5,g_1}-f_{6,g_1}).
		\end{align*}
		\item Index set $\{5,6\}$ corresponds to one signed permutation cycle with even number of minus signs, $c_5c_6(5\ 6)$, then
		\begin{align*}
		\frac{1}{2}\sum_{i=5}^6f_{i,g_1}&=f_{5,g_1}\\
        &+Q_{2,\{1,2\},g_1}(f_{1,g_1}-f_{2,g_1})+Q_{2,\{3,4\},g_1}(f_{3,g_1}-f_{4,g_1})+Q_{2,\{5,6\},g_1}(f_{5,g_1}-f_{6,g_1}).
		\end{align*}
	\end{itemize}
	
	\item For generator $g_2$:
	\begin{itemize}
		\item Index set $\{1,2,3,4\}$ corresponds to one signed permutation cycle with even number of minus signs, $c_1c_2(1\ 2\ 3\ 4)$, then
		\begin{align*}
		\frac{1}{2}\sum_{i=1}^4f_{i,g_2}&=f_{2,g_2}\\
        &+Q_{1,\{1,2,3,4\},g_2}(f_{2,g_2}-f_{3,g_2}-f_{4,g_2}-f_{1,g_2})+Q_{1,\{5\},g_2}f_{5,g_2}+Q_{1,\{6\},g_2}f_{6,g_2}.
		\end{align*}
		\item Index set $\{5,6\}$ corresponds to two identity cycles, then
		\begin{align*}
		\frac{1}{2}\sum_{i=5}^6f_{i,g_2}&=0\\
        &=Q_{2,\{1,2,3,4\},g_2}(f_{2,g_2}-f_{3,g_2}-f_{4,g_2}-f_{1,g_2})+Q_{2,\{5\},g_2}f_{5,g_2}+Q_{2,\{6\},g_2}f_{6,g_2}.
		\end{align*}
	\end{itemize}
	
	To find $\cap_{i=1}^2V_{I,g_i}$ with index set $I=\{1,2,3,4\}$, we need to solve the following linear system:
    \begin{align*}
        &\alpha_{1,I}:Q_{1,\{1,2\},g_1}=-Q_{1,\{1,2,3,4\},g_2},\\
        &\alpha_{2,I}:1-Q_{1,\{1,2\},g_1}=1+Q_{1,\{1,2,3,4\},g_2},\\
        &\alpha_{3,I}:Q_{1,\{3,4\},g_1}=-Q_{1,\{1,2,3,4\},g_2},\\
        &\alpha_{4,I}:1-Q_{1,\{3,4\},g_1}=-Q_{1,\{1,2,3,4\},g_2},\\
        &\alpha_{5,I}:Q_{1,\{5,6\},g_1}=Q_{1,\{5\},g_2},\\
        &\alpha_{6,I}:-Q_{1,\{5,6\},g_1}=Q_{1,\{6\},g_2}.\\
    \end{align*}
     From equations of $\alpha_{3,I},\alpha_{4,I}$, we have $Q_{1,\{1,2,3,4\},g_2}=-\frac{1}{2}$, that implies there is no integer solution of this linear system, thus $\frac{1}{2}\sum_{i=1}^4f_i$ generates a copy of $\mathbb{Z}/2$ in $Z^1(G,\Pic(\bar{X}))/\bZ\langle f_1,\dots,f_6\rangle$.
	
	To find $\cap_{i=1}^2 V_{I,g_i}$ with index set $I=\{5,6\}$, we need to solve the following linear system:
    \begin{align*}
        &\alpha_{1,I}:Q_{2,\{1,2\},g_1}=-Q_{2,\{1,2,3,4\},g_2},\\
        &\alpha_{2,I}:-Q_{2,\{1,2\},g_1}=Q_{2,\{1,2,3,4\},g_2},\\
        &\alpha_{3,I}:Q_{2,\{3,4\},g_1}=-Q_{2,\{1,2,3,4\},g_2},\\
        &\alpha_{4,I}:-Q_{2,\{3,4\},g_1}=-Q_{2,\{1,2,3,4\},g_2},\\
        &\alpha_{5,I}:1+Q_{2,\{5,6\},g_1}=Q_{2,\{5\},g_2},\\
        &\alpha_{6,I}:-Q_{2,\{5,6\},g_1}=Q_{2,\{6\},g_2}.\\
    \end{align*} 
    There is an integer solution given by
	\begin{align*}
	Q_{2,\{1,2\},g_1}=Q_{2,\{3,4\},g_1}=Q_{2,\{5,6\},g_1}=0,\\
	Q_{2,\{1,2,3,4\},g_2}=Q_{2,\{6\},g_2}=0,\ Q_{2,\{5\},g_2}=1,
	\end{align*}
	thus $\frac{1}{2}\sum_{i=5}^6 f_i$ does not generate a copy of $\mathbb{Z}/2$ in $Z^1(G,\Pic(\bar{X}))/\bZ\langle f_1,\dots,f_6\rangle$.
	
	It follows that
    \begin{align*}
    Z^1(G,\Pic(\bar{X}))/\bZ\langle f_1,\dots,f_6\rangle=\bZ/2.
	\end{align*}
    
	Next, we compute
    \begin{align*}
         \bZ\langle f_{-1},f_1,\dots,f_6\rangle/\bZ\langle f_1,\dots,f_6\rangle,
    \end{align*}
    which requires solving the following linear system:
	\begin{align*}
	f_{-1,g_1}&=\frac{1}{2}\sum_{i=1}^6f_{i,g_1}=f_{2,g_1}+f_{4,g_1}+f_{6,g_1}+\\
	&\bar{Q}_{1,\{1,2\},g_1}(f_{1,g_1}-f_{2,g_1})+\bar{Q}_{1,\{3,4\},g_1}(f_{3,g_1}-f_{4,g_1})+\bar{Q}_{1,\{5,6\},g_1}(f_{5,g_1}-f_{6,g_1}),\\
    \end{align*}
    and
    \begin{align*}
	f_{-1,g_2}&=\frac{1}{2}\sum_{i=1}^6f_{i,g_2}=f_{2,g_2}\\
    +&\bar{Q}_{2,\{1,2,3,4\},g_2}(f_{2,g_2}-f_{3,g_2}-f_{4,g_2}-f_{1,g_2})+\bar{Q}_{2,\{5\},g_2}f_{5,g_2}+\bar{Q}_{2,\{6\},g_2}f_{6,g_2}.
	\end{align*}
	We have $\cap_{i=1}^2V_{I,g_i}=\emptyset$ with $I=\{-1\}$, thus $f_{-1} \notin \bZ\langle f_1,\dots,f_6\rangle$. Therefore, 
    $$
    \bZ\langle f_{-1},f_1,\dots,f_6\rangle/\bZ\langle f_1,\dots,f_6\rangle=\bZ/2,
    $$
    and $\mathrm{H}^1(G,\Pic(\bar{X}))=0$.
\end{itemize}
\end{example}

\begin{example}
Consider a standard conic bundle $X$ over $k=\mathbb{Q}$ with an affine patch,
\begin{align*}
    X_1:\{y^2-5z^2=x^5-2\} \subset \mathbb{A}_k^3(x,y,z).
\end{align*}
and let
\begin{align*}
    P(x)=x^5-2
\end{align*}
The splitting field is $K'=\mathbb{Q}(2^{\frac{1}{5}},\epsilon_5=e^{2\pi i/5})$ and the Galois group $G=\Gal(K'/k)=\mathfrak{F}_5$ which is case 2 in section \ref{sect:classes} with $p=5,r=1$.

There are 6 pairs of degenerate fibers:
\begin{align*}
    &q_i^{\pm}:\{ y=\pm \sqrt{5}z,x=2^{\frac{1}{5}}\epsilon_5^{i-1}\},\ i=1,\dots,5,\\
    &q_6^{\pm}: \text{degenerate fibers over $\infty$}.
\end{align*}

The Galois group $G$ is generated by
\begin{align*}
    &\sigma:2^{\frac{1}{5}} \to 2^{\frac{1}{5}},\quad \epsilon_5 \to \epsilon_5^2,\\
    &\omega:2^{\frac{1}{5}} \to 2^{\frac{1}{5}}\epsilon_5,\quad \epsilon_5 \to \epsilon_5.
\end{align*}

Under the action of $\sigma$,
\begin{align*}
    \sqrt{5}=2(\epsilon_5+\epsilon_5^4)+1 \xrightarrow{\sigma} 2(\epsilon_5^2+\epsilon_5^3)+1=-\sqrt{5}.
\end{align*}

On degenerate fibers, the action of $G$ is given by
\begin{align*}
\sigma:q_1^{\pm} \to q_1^{\mp}, q_2^{\pm} \to q_3^{\mp}, q_3^{\pm} \to q_5^{\mp}, q_4^{\pm} \to q_2^{\mp}, q_5^{\pm} \to q_4^{\mp}, q_6^{\pm} \to q_6^{\mp},\\
\omega:q_1^{\pm} \to q_2^{\pm}, q_2^{\pm} \to q_3^{\pm}, q_3^{\pm} \to q_4^{\pm}, q_4^{\pm} \to q_5^{\pm}, q_5^{\pm} \to q_1^{\pm}, q_6^{\pm} \to q_6^{\pm}.
\end{align*}

In terms of signed permutation representation, the group $G$ is written as
\begin{align*}
    &\sigma=c_1c_2c_3c_4c_5c_6(2,3,5,4),\\
    &\omega=(1,2,3,4,5).
\end{align*}

Under the action of $pr(G)$ on index set $\{1,2,3,4,5,6\}$, there are two orbits,
\begin{align*}
    \{1,2,3,4,5\} \text{ and } \{6\}
\end{align*}

Generator $\sigma$ is discussed in case (1) above, we  have $1,6 \in I$, so we only consider index set
\begin{align*}
    I=\{1,2,3,4,5,6\},
\end{align*}
and
\begin{align*}
    \frac{1}{2}\sum_{i\in I}f_i=f_{-1} \notin \bZ\langle f_1,\dots,f_6\rangle.
\end{align*}

We have
\begin{align*}
    &Z^1(G,\Pic(\bar{X}))/\bZ\langle f_1,\dots,f_n\rangle=\bZ/2,\\
    &\bZ \langle f_{-1},f_1,\dots,f_n\rangle/\bZ\langle f_1,\dots,f_n\rangle=\bZ/2,
\end{align*}
and
\begin{align*}
    \mathrm{H}^1(G,\Pic(\bar{X}))=0.
\end{align*}

\end{example}

\section{Projection with respect to orbit decomposition}
\label{sect:proj}
Let $G \subset W(\mathsf{D}_n)$ be a group satisfying the (H1), relative minimality and minimality conditions.
From \cite{kst}, a relatively $k$-minimal but not $k$-minimal conic bundle can only occur if its degree is equal to 3,5,6,7 or 8.
When the degree of the conic bundle is less than 2, the minimality condition and relative minimality condition coincide.

Since $G$ is minimal, the group $G$ gives a orbit decomposition on set $\{1,\dots,n\}$ and pick an orbit $O$ of length $n'$, without loss of generality, can assume $O=\{1,\dots,n'\}$.

For every group element $g=\beta_{(g,1)}\dots\beta_{(g, R_g)}$ where all $\beta$ are disjoint signed permutation cycles, we assume that the first $r_g$ signed permutation cycles describe the group action of element $g$ on orbit $O$.
 
We construct a set $\overline{G}_O$ defined by
$$
\overline{G}_O=\{P_O(g)|g \in G\},
$$
where
\[
P_O(g)=\overline{g}=
\begin{cases}
\beta_{(g,1)}\dots\beta_{(g,r_g)}c_{n'+1},\quad &\text{if $\sigma(\beta_{(g,1)}\dots\beta_{(g,r_g)})=-1$},\\
\beta_{(g,1)}\dots\beta_{(g,r_g)},\quad &\text{if $\sigma(\beta_{(g,1)}\dots\beta_{(g,r_g)})=1$}.
\end{cases}
\]

\begin{lemma}{\label{lemma 14}}
The set $\overline{G}_O$ is a group and $P_O$ is a group homomorphism
\end{lemma}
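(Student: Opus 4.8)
The plan is to realize $P_O$ as a genuine group homomorphism into $W(\mathsf{D}_{n'+1})$, from which the fact that $\overline{G}_O$ is a group follows for free as the image of a homomorphism. The first step is to isolate the honest restriction map. Because $O=\{1,\dots,n'\}$ is a $G$-orbit it is in particular $G$-invariant, so every $g\in G$ permutes the symbols $\{j^{\pm}:j\in O\}$ among themselves, and the signed permutation cycles $\beta_{(g,1)},\dots,\beta_{(g,r_g)}$ are precisely those supported on $O$. Writing $\mathrm{res}(g)=\beta_{(g,1)}\cdots\beta_{(g,r_g)}\in W(\mathsf{B}_{n'})$, the assignment $g\mapsto\mathrm{res}(g)$ is a homomorphism $\mathrm{res}\colon G\to W(\mathsf{B}_{n'})$: restricting a group action to an invariant subset is again an action, so $\mathrm{res}(gh)=\mathrm{res}(g)\mathrm{res}(h)$. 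Concretely, collecting the $O$-supported cycles of $gh$ agrees with multiplying the $O$-supported parts of $g$ and $h$, since signed permutation cycles supported on $O$ commute with those supported on its complement.

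Next I would rewrite $P_O$ in terms of $\mathrm{res}$. Embedding $W(\mathsf{B}_{n'})\hookrightarrow W(\mathsf{B}_{n'+1})$ by fixing the symbol $n'+1$, the definition reads $P_O(g)=\mathrm{res}(g)\,c_{n'+1}^{\,\epsilon(g)}$, where $\epsilon(g)=\tfrac{1}{2}(1-\sigma(\mathrm{res}(g)))\in\{0,1\}$ equals $1$ exactly when $\mathrm{res}(g)$ has an odd number of minus signs. Since $c_{n'+1}$ is supported on the single symbol $n'+1$, it commutes with every element of the image of $\mathrm{res}$, and $c_{n'+1}^2=1$. By construction $\sigma(P_O(g))=+1$, so $P_O$ indeed takes values in $W(\mathsf{D}_{n'+1})$.

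The heart of the argument is the check $P_O(gh)=P_O(g)P_O(h)$. Using that $c_{n'+1}$ commutes with $\mathrm{res}(h)$,
\[
P_O(g)P_O(h)=\mathrm{res}(g)\,c_{n'+1}^{\,\epsilon(g)}\,\mathrm{res}(h)\,c_{n'+1}^{\,\epsilon(h)}
=\mathrm{res}(gh)\,c_{n'+1}^{\,\epsilon(g)+\epsilon(h)}.
\]
It then remains to match exponents modulo $2$. Because the character $\sigma$ is multiplicative, $\sigma(\mathrm{res}(gh))=\sigma(\mathrm{res}(g))\sigma(\mathrm{res}(h))$, and a direct parity check gives $\epsilon(g)+\epsilon(h)\equiv\epsilon(gh)\pmod 2$: the left side is odd precisely when exactly one of $\mathrm{res}(g),\mathrm{res}(h)$ has sign $-1$, which is exactly when $\sigma(\mathrm{res}(gh))=-1$, i.e. $\epsilon(gh)=1$. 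Combined with $c_{n'+1}^2=1$ this yields $c_{n'+1}^{\,\epsilon(g)+\epsilon(h)}=c_{n'+1}^{\,\epsilon(gh)}$, hence $P_O(gh)=P_O(g)P_O(h)$.

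Once $P_O\colon G\to W(\mathsf{D}_{n'+1})$ is known to be a homomorphism, its image $\overline{G}_O=P_O(G)$ is a subgroup of $W(\mathsf{D}_{n'+1})$, so $\overline{G}_O$ is a group and $P_O$ is the asserted homomorphism. The one point requiring care---the main obstacle---is that $P_O$ is not the bare restriction $\mathrm{res}$ (which is trivially multiplicative) but the restriction twisted by the correction factor $c_{n'+1}$; the whole content of the lemma is that this twist does not destroy multiplicativity, and this is exactly the parity bookkeeping above, made possible by $c_{n'+1}$ being an involution central relative to the image of $\mathrm{res}$ together with the multiplicativity of $\sigma$.
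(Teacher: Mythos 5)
Your proposal is correct, and it is organized rather differently from the paper's proof, in a way that is worth noting. The paper verifies directly that $\overline{G}_O$ satisfies the four group axioms (identity, closure, associativity, inverse), with the key computation being the closure step: since the cycles $\beta_{(g,r_g+1)}\cdots\beta_{(g,R_g)}$ and $\beta_{(h,1)}\cdots\beta_{(h,r_h)}$ are supported on disjoint orbits they commute, so the $O$-part of $gh$ is the product of the $O$-parts of $g$ and $h$; the homomorphism property of $P_O$ is then extracted as a byproduct of that closure calculation, and the cases where the correction factor $c_{n'+1}$ appears are dismissed with ``the arguments are similar.'' You instead factor $P_O$ as the honest restriction homomorphism $\mathrm{res}\colon G\to W(\mathsf{B}_{n'})$ twisted by $c_{n'+1}^{\,\epsilon(g)}$, prove multiplicativity of $P_O$ first by combining the centrality and involutivity of $c_{n'+1}$ with the multiplicativity of the character $\sigma$, and then obtain that $\overline{G}_O$ is a group for free as the image of a homomorphism. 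The underlying computational fact (disjointly supported signed cycles commute) is the same in both arguments, but your version handles the $c_{n'+1}$ twist uniformly and explicitly via the parity identity $\epsilon(g)+\epsilon(h)\equiv\epsilon(gh)\pmod 2$, which is precisely the part the paper waves at; it also makes transparent why $P_O$ lands in $W(\mathsf{D}_{n'+1})$, since $\sigma(P_O(g))=+1$ by construction. The paper's axiom-by-axiom check buys nothing extra here (associativity and inverses are automatic once one knows the image of a homomorphism is a subgroup), so your route is the more economical one.
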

\begin{proof}
	We prove $\overline{G}_O$ is a group by definition.
	\begin{itemize}
		\item Identity:
		
			Since $\mathbb{I} \in G$, then $P_O(\mathbb{I})=\overline{\mathbb{I}} \in \overline{G}_O$.
		\item closure:
		
			If $\overline{g},\overline{h} \in \overline{G}_O$, then there are elements $g,h \in G$ such that $P_O(g)=\overline{g}$ and $P_O(h)=\overline{h}$. 
            
            Assume they have the form
			\begin{align*}
			&g=\beta_{(g,1)}\dots\beta_{(g,r_g)}\beta_{(g,r_g+1)}\dots\beta_{(g,R_g)},\\
			&h=\beta_{(h,1)}\dots\beta_{(h,r_h)}\beta_{(h,r_h+1)}\dots\beta_{(h,R_h)},\\
			&\overline{g}=\beta_{(g,1)}\dots\beta_{(g,r_g)},\\
			&\overline{h}=\beta_{(h,1)}\dots\beta_{(h,r_h)},
			\end{align*}
			then we have
			\begin{align*}
			gh=(\beta_{(g,1)}\dots\beta_{(g,r_g)})(\beta_{(g,r_g+1)}\dots\beta_{(g,R_g)})(\beta_{(h,1)}\dots\beta_{(h,r_h)})(\beta_{(h,r_h+1)}\dots\beta_{(h,R_h)})\\
			\end{align*}
			Since $\beta_{(g,r_g+1)}\dots\beta_{(g,R_g)}$ and $\beta_{(h,1)}\dots\beta_{(h,r_h)}$ describe group action on different orbits, so they commute and
			
			\begin{align*}
			gh=(\beta_{(g,1)}\dots\beta_{(g,r_g)})(\beta_{(h,1)}\dots\beta_{(h,r_h)})(\beta_{(g,r_g+1)}\dots\beta_{(g,R_g)})(\beta_{(h,r_h+1)}\dots\beta_{(h,R_h)}),
			\end{align*}
			then $P_O(gh)=\overline{gh}=\overline{g}\overline{h}=P_O(g)P_O(h)$ and we have $\overline{g}\overline{h} \in \overline{G}_O$.
            
            For the other two cases with $$
            \overline{g}=\beta_{(g,1)}\dots\beta_{(g,r_g)}c_{n'+1},
            $$
            or 
            $$
            \overline{h}=\beta_{(h,1)}\dots\beta_{(h,r_h)}c_{n'+1},
            $$
            the arguments are similar.
		\item associative:
		
			If $\overline{g},\overline{g'},\overline{g''} \in \overline{G}_O$ and $P_O(g)=\overline{g}, P_O(g')=\overline{g'}$ and $P_O(g'')=\overline{g''}$.
			
			Using the proof in the previous point, we have
			\begin{align*}
			(\overline{g}\overline{g'})\overline{g''}=(\overline{gg'}) \overline{g''}=P_O((gg')g'')=P_O(g(g'g''))=\overline{g}(\overline{g'g''})=\overline{g}(\overline{g'}\ \overline{g''})
			\end{align*}
		\item Inverse:
		
			Let $\overline{g} \in \overline{G}_O$ and $P_O(g)=\overline{g}$.
			
			In $G$, we have the inverse of $g$ exists and $gg^{-1}=\mathbb{I}$, then $P_O(gg^{-1})=P_O(\mathbb{I})$.
			We have
			\[
			\overline{g}\overline{g^{-1}}=\overline{\mathbb{I}}
			\]
			and
			\[
			\overline{g}^{-1}=\overline{g^{-1}} \in \overline{G}_O 
			\]
	\end{itemize}
 
	In the proof of the closure property of $\overline{G}_O$, we already have
	\[
	P_O(gh)=\overline{gh}=\overline{g}\overline{h}=P_O(g)P_O(h) \quad \forall g,h \in G,
	\]
	
	and it shows that $P_O$ is a group homomorphism.
\end{proof}

Group $\overline{G}_O$ is either a subgroup of $W(\mathsf{D}_{n'})$ with transitive orbit or a subgroup of $W(\mathsf{D}_{n'+1})$ with two orbits of length $n'$ and $1$.

From Lemma \ref{lemma 14}, given an orbit $O$, there is a map
\begin{align*}
P_{O}:\{H:H\leq G\} \to \{\overline{H}_O:\overline{H}_O\leq \overline{G}_O\}
\end{align*}
given by
\[
P_O(H)=\overline{H}_O=\{\overline{h}=P_O(h)| h \in H\}
\]

\begin{lemma}
The group $\overline{G}_O$ satisfies both relative minimality and $\mathrm{(H1)}$ conditions.
\end{lemma}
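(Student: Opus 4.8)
The plan is to prove two separate assertions about $\overline{G}_O$: that it is relatively minimal, and that it satisfies the (H1) condition. For \emph{relative minimality}, I would work directly from the definition $\Pic(\bar{X})^{\Gal}=\bZ l_0\oplus\bZ K_X$. By the parametrization of section \ref{sect:conic}, relative minimality of a conic bundle group is equivalent to the condition that the induced permutation action $pr(\overline{G}_O)$ on $\{1,\dots,n'\}$ (together with the index $n'+1$ appended by $P_O$) is transitive on the original orbit $O$, and that for each pair $q_j^\pm$ the two components lie in a single $\overline{G}_O$-orbit. Since $O$ was chosen as a \emph{single} orbit of $G$, transitivity on $\{1,\dots,n'\}$ is inherited from $G$ by construction of $P_O$; the point needing care is that when $\sigma(\beta_{(g,1)}\dots\beta_{(g,r_g)})=-1$ we append $c_{n'+1}$, which ensures the image sits in $W(\mathsf{D}_{n'+1})$ (the image must have an even total number of sign changes), and one checks the appended index $n'+1$ does not destroy transitivity or the requirement that $q_j^+,q_j^-$ stay in one orbit. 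The key observation is that minimality of $G$ guarantees, for each $j\in O$, an element $g\in G$ whose restriction to $O$ still contains $c_j$, so $P_O(g)$ interchanges $q_j^+$ and $q_j^-$.

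For the \emph{(H1) condition}, the natural strategy is to relate cocycles and coboundaries of $\overline{G}_O$ to those of $G$ through the homomorphism $P_O$ of Lemma \ref{lemma 14}. Concretely, I would invoke Lemma \ref{lem:sylow2} to reduce to Sylow $2$-subgroups, and then use the machinery developed in section \ref{sect:noncyclic}: an obstruction to (H1) for $\overline{G}_O$ is detected by an index set $\overline{I}$, a union of orbits of $pr(\overline{G}_O)$ on $\{1,\dots,n'+1\}$, with $\frac12\sum_{i\in\overline{I}}f_i\notin\bZ\langle f_1,\dots,f_{n'+1}\rangle$. The plan is to show that any such obstructing $\overline{I}$ for $\overline{G}_O$ pulls back, via $P_O$, to an obstructing index set $I$ for $G$ itself (namely $I=\overline{I}\cap O$, or $I=\overline{I}\setminus\{n'+1\}$ read inside $\{1,\dots,n\}$), contradicting the hypothesis that $G$ satisfies (H1). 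The mechanism is that $P_O$ simply forgets the signed permutation cycles of $g$ supported outside $O$ (and records their collective sign via the bookkeeping index $c_{n'+1}$), so the local computation of $V_{I,g}$ restricted to the cycles acting on $O$ is \emph{identical} for $g$ and for $\overline{g}=P_O(g)$ on the relevant coordinates.

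The main obstacle I anticipate is the role of the auxiliary index $n'+1$. Because $P_O$ inserts $c_{n'+1}$ precisely when the $O$-part of $g$ has an odd number of sign changes, the extra coordinate is what repairs membership in $W(\mathsf{D}_{n'+1})$, but it also alters the $f_{-1}$ relation and the count of signed cycles with $\sigma=-1$. I must therefore check carefully that the correspondence between obstructions is faithful in \emph{both} directions: that no new obstruction is created for $\overline{G}_O$ by the presence of $c_{n'+1}$ that did not already exist inside $G$. The cleanest way to control this is to track, for each generator and each candidate index set, the column $f_{i,\overline{g}}$ against $f_{i,g}$ entrywise on the indices of $O$, noting that the entries at index $n'+1$ in $\overline{g}$ mirror the combined parity of the sign changes that $g$ effects on all of $\{1,\dots,n\}\setminus O$ contributing to the fiber-class coordinate; since $g\in W(\mathsf{D}_n)$ forces the global sign-change count to be even, the parity carried by $c_{n'+1}$ is exactly the complementary parity, so the solvability of the linear system $\xi_{\overline g}=\sum\alpha_{j}f_{j,\overline g}$ matches that of $\xi_g=\sum\alpha_j f_{j,g}$. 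Once this entrywise matching is established, the equivalence of (H1) obstructions for $G$ and $\overline{G}_O$ follows, and relative minimality combined with (H1) for $\overline{G}_O$ is immediate.
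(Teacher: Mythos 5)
Your overall strategy for the (H1) half --- transferring obstructions through $P_O$ --- matches the argument in the text, but your treatment of the auxiliary index $n'+1$ contains a genuine error. When $\overline{G}_O\subset W(\mathsf{D}_{n'+1})$ and an obstructing index set $\overline{I}$ contains $n'+1$, you propose to pull it back to $I=\overline{I}\setminus\{n'+1\}=\overline{I}\cap O$. This fails: for an element $h\in H=P_O^{-1}(\overline{H}_O)$ whose restriction to $O$ has an odd number of sign changes, the index-$0$ entry of $\sum_{i\in\overline{I}\cap O}f_{i,h}$ is odd, so $\frac12\sum_{i\in\overline{I}\cap O}f_i$ is not even integral, let alone a cocycle --- the $c_{n'+1}$ term in $\overline{h}$ was precisely what restored the mod-$2$ condition, and discarding it discards the obstruction. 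The correct substitute is to replace $\overline{f}_{n'+1}$ by $\sum_{i=n'+1}^{n}f_i$, i.e., to adjoin \emph{all} indices outside $O$, whose collective parity is complementary because $h\in W(\mathsf{D}_n)$; one must then check separately that $\frac12\bigl(\sum_{i\in\overline{I}\cap O}f_i+\sum_{i=n'+1}^{n}f_i\bigr)\notin\bZ\langle f_1,\dots,f_n\rangle$, using that the cycles of $h$ outside $O$ must contain a signed cycle with one minus sign. You articulate the relevant parity fact in your final paragraph, but it points to the opposite pullback from the one you wrote down. A smaller omission: (H1) quantifies over all subgroups $\overline{H}_O\subseteq\overline{G}_O$, and the Sylow-$2$ reduction does not by itself dispose of this; the argument must run over each $\overline{H}_O$ with $H=P_O^{-1}(\overline{H}_O)$ and establish injectivity of the induced map $Z^1(\overline{H}_O,\cdot)/\bZ\langle\overline{f}_1,\dots\rangle\to Z^1(H,\cdot)/\bZ\langle f_1,\dots,f_n\rangle$, which you do not address.

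For relative minimality you take a genuinely different route: you invoke an equivalence between $\Pic(\bar{X})^{G}=\bZ l_0\oplus\bZ K_X$ and the condition that $q_j^+$ and $q_j^-$ lie in a single orbit for every $j$ (the transitivity you also demand is not part of this equivalence and is not needed). That criterion is standard and your deduction from it is sound --- the witnessing elements restrict to $O$ unchanged under $P_O$, and in the $W(\mathsf{D}_{n'+1})$ case some image contains $c_{n'+1}$ by construction --- but the equivalence itself is proved nowhere in this paper and would need to be supplied (it follows from computing the rank of invariants via the permutation representations on components and on fibers). The text instead argues directly and more elementarily: a hypothetical invariant $\overline{v}\notin\bZ\overline{l_0}\oplus\bZ\overline{K}$ is normalized (in the $W(\mathsf{D}_{n'+1})$ case using the relation $x_{-1}+2x_{n'+1}=0$ forced by an element containing $c_{n'+1}$) and extended by zero to a $G$-invariant class outside $\bZ l_0\oplus\bZ K_X$, contradicting relative minimality of $G$. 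Either route works, but yours is complete only once the orbit criterion is justified.
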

\begin{proof}
We divide the proof into two cases:
\begin{enumerate}
\item If $\overline{G}_O \subset W(\mathsf{D}_{n'})$.
	\begin{itemize}
		\item Relative minimality condition of $\overline{G}_O$.
		
		      If $\overline{G}_O$ is not relatively minimal, then 
			\[
			(\mathbb{Z}^{n'+2})^{\overline{G}_O}\neq (\mathbb{Z}\overline{l_0}\oplus\mathbb{Z}\overline{K}) 
			\]
			where
			\[
			\overline{l_0}=[0,1,0,\dots,0]^T
			\]
                and
			\[
			\overline{K}=[-2,-2,1,\dots,1]^T
			\] having length $n'+2$.

			Therefore, we can find an element $\overline{v} \in (\mathbb{Z}^{n'+2})^{\overline{G}_O}\setminus (\mathbb{Z}\overline{l_0}\oplus\mathbb{Z}\overline{K})$ of the form
			\begin{align*}
			\overline{v}=[x_{-1},0,x_1,\dots,x_{n'}]^T,
			\end{align*}
			such that
			\begin{align*}
			2\overline{v}+x_{-1}(\overline{K}+2\overline{l_0})=[0,0,2x_1+x_{-1},\dots,2x_{n'}+x_{-1}]^T=\overline{\zeta}.
			\end{align*}
   
			The element $\overline{\zeta} \neq 0$, otherwise $\overline{v}=-\frac{x_{-1}}{2}(\overline{K}+2\overline{l_0})$. In this setting, either $\overline{v}$ is in $\mathbb{Z}\overline{l_0}\oplus\mathbb{Z}\overline{K}$ or it contains half-integers entries. Both cases lead to a contradiction.
			
			Extend $\overline{\zeta}$ by $0$ to produce an element $\zeta\in\Pic(\bar{X})=\mathbb{Z}^{n+2}$ where
			\[
			\zeta=[0,0,2x_1+x_{-1},\dots,2x_{n'}+x_{-1},0,\dots,0]^T,	
			\]
			and it is invariant under the action of $G$, i.e., $\zeta \in (\Pic(\bar{X}))^G$.
            
			Obviously $\zeta \notin \mathbb{Z}l_0\oplus\mathbb{Z}K_X$, where
			\[
			l_0=[0,1,0,\dots,0]^T
			\]and
			\[
			K_X=[-2,-2,1,\dots,1]^T
			\]
			having length $n+2$.
   
            Therefore,
            $$
            \Pic(\bar{X})^G \neq \mathbb{Z}l_0\oplus\mathbb{Z}K_X,
            $$
            and $G$ is not relatively minimal, which contradicts our assumption.
		\item (H1) condition of $\overline{G}_O$.
		
			Let $\overline{H}_O$ be a subgroup of $\overline{G}_O$ and let $H=P_O^{-1}(\overline{H}_O)$ and assume 
            \begin{align*}
            \mathrm{H}^1(\overline{H}_O,\mathbb{Z}^{n'+2})=(\mathbb{Z}/2)^{\overline{\Lambda}} \text{ and } \rH^1(H,\Pic(\bar{X}))=(\mathbb{Z}/2)^{\Lambda}.
            \end{align*}
			
			We treat $H$( or $\overline{H}_O$) as a set and use this set as the set of generators of $H$( or $\overline{H}_O$).
			
			By the same argument in Theorem \ref{thm:13}, there is a group element of $H$( or $\overline{H}_O$) having a signed permutation cycle with one minus sign, then we have
			\begin{align*}
			f_{-1} \notin \bZ\langle f_1,\dots,f_n\rangle,
			\end{align*}
			and
			\begin{align*}
			\overline{f}_{-1} \notin \bZ\langle \overline{f}_1,\dots,\overline{f}_{n'}\rangle,
			\end{align*}
			where $\overline{f}_i$ are columns of $B^1(\overline{H}_O,\bZ^{n'+2})$.
   
			Furthermore, we have
			\begin{align*}
			&Z^1(H,\Pic(\bar{X}))/\bZ\langle f_1,\dots,f_n\rangle=(\mathbb{Z}/2)^{\Lambda+1},\\
			&Z^1(\overline{H}_O,\mathbb{Z}^{n'+2})/\bZ\langle \overline{f}_1,\dots,\overline{f}_{n'}\rangle=(\mathbb{Z}/2)^{\overline{\Lambda}+1}.
			\end{align*}
		
            If there is a subset $I \subset \{1,\dots,n' \}$ such that
		\begin{align*}
		\frac{1}{2}\sum_{i \in I}\overline{f}_i \in Z^1(\overline{H}_O,\mathbb{Z}^{n'+2})\setminus \bZ\langle \overline{f}_1,\dots,\overline{f}_{n'}\rangle,
			\end{align*}
			where $\overline{f}_i$ are columns of $B^1(\overline{H}_O,\mathbb{Z}^{n'+2})$, we conclude:
			\begin{enumerate}
				\item For every element $\overline{h}\in\overline{H}_O$, we have
				\begin{align*}
				\sum_{i \in I}\overline{f}_{i,\overline{h}} = 0 \mod 2.
				\end{align*}
				\item There is no integer solution $\{\alpha_i\}$ such that
				\begin{align*}
				\frac{1}{2}\sum_{i \in I}\overline{f}_{i,\overline{h}}=\sum_{i=1}^{n'}\alpha_i\overline{f}_{i,\overline{h}} \in \bZ\langle \overline{f}_{1,\overline{h}},\dots,\overline{f}_{n',\overline{h}}\rangle,
				\end{align*}
				for all $\overline{h} \in \overline{H}_O$.
			\end{enumerate}
			We observe that
			\begin{align*}
			(\overline{f}_{i,\overline{h}})_j=(f_{i,h})_j,\quad j\in \{-1,\dots,n'\}
			\end{align*}
			for all pairs of $h,\overline{h}$ satisfy $P_O(h)=\overline{h}$. Here, $f_{i,h}$ extends $\overline{f}_{i,\overline{h}}$ by adding zeros. Consequently, we have
			\[
			\sum_{i \in I}f_{i,h} = 0 \mod 2 \quad \forall h\in H,
			\]
			and there is no integer solution $\{\alpha_i\}$ such that
			\begin{align*}
			\frac{1}{2}\sum_{i \in I}f_{i,h}=\sum_{i=1}^{n'}\alpha_if_{i,h} \in \bZ\langle f_{1,h},\dots,f_{n',h}\rangle
			\end{align*}
			for all $h \in H$.
			
			Therefore, we have
			\begin{align*}
			\frac{1}{2}\sum_{i \in I}f_i \notin \bZ\langle f_1,\dots,f_{n'}\rangle.
			\end{align*}
			It is obvious that
                $$
                \frac{1}{2}\sum_{i \in I}f_i \notin \bZ\langle f_{n'+1},\dots,f_n\rangle,
                $$
                and we have
			\[
			\frac{1}{2}\sum_{i \in I}f_i\notin \bZ\langle f_1,\dots,f_n\rangle.
			\]
			
			We define a map
            \begin{align*}
                \kappa:Z^1(\overline{H}_O,\mathbb{Z}^{n'+2}) \to Z^1(H,\Pic(\bar{X}))
            \end{align*}
            by
            \begin{align*}
            \kappa(\frac{1}{2}\sum_{i\in I}\overline{f}_i)=\frac{1}{2}\sum_{i\in I}f_i,
            \end{align*}
			and it is obvious
            \begin{align*}
            \kappa(\overline{f}_i)=f_i,\quad i=1,\dots,n'.
            \end{align*}
            With abuse of notation, define the map
            \begin{align*}
            \kappa:Z^1(\overline{H}_O,\mathbb{Z}^{n'+2})/\bZ\langle \overline{f}_1,\dots,\overline{f}_{n'}\rangle \to Z^1(H,\Pic(\bar{X}))/\bZ \langle f_1,\dots,f_n\rangle,
            \end{align*}
			and we show this map $\kappa$ is injective.
   
            If there are two subsets $I\ne I' \subset \{1,\dots,n'\}$ such that
            
			\begin{align*}
			\frac{1}{2}(\sum_{i \in I}f_i-\sum_{i \in I'}f_i) \in \bZ\langle f_1,\dots,f_n\rangle,
			\end{align*}
			then
			\begin{align*}
			\frac{1}{2}(\sum_{i \in I}f_i-\sum_{i \in I'}f_i)=\sum_{i=1}^{n'}\alpha_if_i \in \bZ \langle f_1,\dots,f_{n'}\rangle,   
			\end{align*}
			and it implies
			\begin{align*}
			\frac{1}{2}(\sum_{i \in I}f_{i,h}-\sum_{i \in I'}f_{i,h})=\sum_{i=1}^{n'}\alpha_if_{i,h}, \quad \forall h \in H.
			\end{align*}
			By deleting 0 entries corresponding to other orbits from columns $f_{1,h},\dots,f_{n',h}$, we have
			\begin{align*}
			\frac{1}{2}(\sum_{i \in I}\overline{f}_{i,\overline{h}}-\sum_{i \in I'}\overline{f}_{i,\overline{h}})=\sum_{i=1}^{n'}\alpha_i\overline{f}_{i,\overline{h}}, \quad \forall \overline{h} \in \overline{H}_O,
			\end{align*}
			which implies
			\begin{align*}
			\frac{1}{2}(\sum_{i \in I}\overline{f}_i-\sum_{i \in I'}\overline{f}_i) \in \bZ \langle \overline{f}_1,\dots,\overline{f}_{n'}\rangle.
			\end{align*}
			We show this map is injective. Consequently, we have $\Lambda+1 \geq \overline{\Lambda}+1 \Longleftrightarrow \Lambda \geq \overline{\Lambda}$.
			
			Since $G$ satisfies (H1) condition, so does $\overline{G}_O$.
	\end{itemize}
\item If $\overline{G}_O \subset W(\mathsf{D}_{n'+1})$
	\begin{itemize}
		\item Relative minimality condition of $\overline{G}_O$.
            
			If $\overline{G}_O$ is not relative minimal, then
   
			\begin{align*}
			(\mathbb{Z}^{n'+3})^{\overline{G}_O} \neq \mathbb{Z}\overline{l_0}+\mathbb{Z}\overline{K} ,
			\end{align*}
			where
			\begin{align*}
			\overline{l_0}=[0,1,0,\dots,0]^T,
			\end{align*}
            and
			\begin{align*}
			\overline{K}=[-2,-2,1,\dots,1]^T,
			\end{align*}
            having length $n'+3$.
			
			We can find an element $\overline{v} \in (\mathbb{Z}^{n'+3})^{\overline{G}_O}\setminus (\mathbb{Z}\overline{l_0}+\mathbb{Z}\overline{K})$ of the form
			\[
			\overline{v}=[x_{-1},0,x_1,\dots,x_{n'},x_{n'+1}]^T.
			\]
                Since $\overline{G}_O$ is considered as a subgroup of $W(\sf{D}_{n'+1})$, there exists an element $\overline{g}$ of the form $\overline{g}=\beta_1\dots\beta_{r}c_{n'+1}$, then we have
			\[
			\overline{g}-\mathbb{I}=
			\left(
			\begin{array}{cccc}
			0 & 0 & 0 & 0\\
			* & 0 & B_{\Psi(\beta_1\dots\beta_r)} & 1\\
			A_{\Psi(\beta_1\dots\beta_r)} & 0 & \Psi(\beta_1\dots\beta_r)-\mathbb{I} & 0\\
			-1 & 0 & 0 & -2
			\end{array}
			\right),
			\]
            where $\Psi:W(\mathsf{D}_{n'}) \to \GL_{n'}(\mathbb{Z})$ is the representation given in section \ref{sect:conic}.
           
			From the last row of $\overline{g}-\mathbb{I}$, we have relation:
			\[
			x_{-1}+2x_{n'+1}=0,
			\]
			and therefore
			\begin{align*}
			\overline{v}-x_{n'+1}(\overline{K}+2\overline{l_0})
            =[0,0,x_1-x_{n'+1},\dots,x_{n'}-x_{n'+1},0]^T=\overline{\zeta}\neq 0.
			\end{align*}
            
			Extend $\overline{\zeta}$ by zero to produce an element $\zeta$ in $\Pic(\bar{X})=\mathbb{Z}^{n+2}$ where
			\begin{align*}
			\zeta=[0,0,x_1-x_{n'+1},\dots,x_{n'}-x_{n'+1},0,\dots,0]^T,
			\end{align*}
			and it is invariant under the action of $G$, i.e., $\zeta \in (\Pic(\bar{X}))^G$.
            
			Obviously $\zeta \notin \mathbb{Z}l_0\oplus\mathbb{Z}K_X$, where
			\[
			l_0=[0,1,0,\dots,0]^T
			\]and
			\[
			K_X=[-2,-2,1,\dots,1]^T
			\]
			are of length $n+2$.
            
            Thus
            $$
            \Pic(\bar{X})^G \neq \mathbb{Z}l_0\oplus\mathbb{Z}K_X,
            $$
            and $G$ is not relatively minimal, which contradicts our assumption.
		\item (H1) condition of $\overline{G}_O$.
		
			Let $\overline{H}_O$ be a subgroup of $\overline{G}_O$ and let $H=P_O^{-1}(\overline{H}_O)$ and assume \begin{align*}
            \mathrm{H}^1(\overline{H}_O,\mathbb{Z}^{n'+3})=(\mathbb{Z}/2)^{\overline{\Lambda}} \text{ and } \rH^1(H,\Pic(\bar{X}))=(\mathbb{Z}/2)^{\Lambda}.
            \end{align*}
			
			We treat $H$( or $\overline{H}_O$) as a set and use this set as the set of generators of $H$( or $\overline{H}_O$).
		
			By the same argument in Theorem \ref{thm:13}, there is a group element of $H$( or $\overline{H}_O$) having a signed permutation cycle with one minus sign.
		
			Then we have
			\begin{align*}
			f_{-1} \notin \bZ \langle f_1,\dots,f_n\rangle,
			\end{align*}
			and
			\begin{align*}
			\overline{f}_{-1} \notin \bZ\langle \overline{f}_1,\dots,\overline{f}_{n'}\rangle,
			\end{align*}
		  where $\overline{f}_i$ are columns of $B^1(\overline{H}_O,\bZ^{n'+3})$ and
			\begin{align*}
			    &Z^1(H,\Pic(\bar{X}))/\bZ \langle f_1,\dots,f_n\rangle =(\mathbb{Z}/2)^{\Lambda+1},\\
                &Z^1(\overline{H}_O,\mathbb{Z}^{n'+2})/\bZ \langle \overline{f}_1,\dots,\overline{f}_{n'}\rangle=(\mathbb{Z}/2)^{\overline{\Lambda}+1}.
			\end{align*}
			
			To show $\bar{\Lambda} \leq \Lambda$, we divide the proof into two cases:
			\begin{enumerate}
				\item For a subset $I \subset \{1,\dots,n'\}$ such that
				\begin{align*}
				\frac{1}{2}\sum_{i \in I}\overline{f}_i \in Z^1(\overline{H}_O,\mathbb{Z}^{n'+3})\setminus \bZ \langle \overline{f}_1,\dots,\overline{f}_{n'+1}\rangle,
				\end{align*}
				by the same argument in the previous case, we have $\frac{1}{2}\sum_{i \in I}f_i$ generates a nonzero element in $Z^1(H,\Pic(\bar{X}))/\bZ\langle f_1,\dots,f_n\rangle$.
				\item Assume there is a subset $I \subset \{1,\dots,n'\}$ such that
				\begin{align*}
				\frac{1}{2}(\sum_{i \in I}\overline{f}_i+\overline{f}_{n'+1}) \in Z^1(\overline{H}_O,\mathbb{Z}^{n'+3})\setminus \bZ \langle \overline{f}_1,\dots,\overline{f}_{n'+1}\rangle,
				\end{align*}

                then it implies
				\begin{align*}
				\sum_{i \in I}f_{i,h}+\sum_{i=n'+1}^nf_{i,h} \equiv 0 \mod 2,
				\end{align*}
				for all elements $h\in H$ and
				\begin{align*}
				\frac{1}{2}(\sum_{i \in I}f_i+\sum_{i=n'+1}^nf_i) \in Z^1(H,\Pic(\bar{X})).
				\end{align*}
				
	            There exists an element $h \in H$ of the form
                $$
                h=\beta_{(h,1)}\dots\beta_{(h,r_h)}\beta_{(h,r_h+1)}\dots\beta_{(h,R_h)},
                $$
                with $\sigma(\beta_{(h,1)}\dots\beta_{(h,r_h)})=-1$.
                This element corresponds to
                \begin{align*}
                P_O(h)=\overline{h}=\beta_{(h,1)}\dots\beta_{(h,r_h)}c_{n'+1}.
                \end{align*}
                Since $\sigma(\beta_{(h,r_h+1)}\dots\beta_{(h,R_h)})=-1$, it follows that among the signed permutation cycles $\beta_{(h,r_h+1)},\dots,\beta_{(h,R_h)}$, there is a signed permutation cycle with one minus sign, thus we have 
			    $$
                \frac{1}{2}(\sum_{i \in I}f_i+\sum_{i=n'+1}^nf_i) \notin \bZ \langle f_1,\dots,f_n\rangle,
                $$
				which induces a nonzero element in
                $$
                Z^1(H,\Pic(\bar{X}))/\bZ\langle f_1,\dots,f_n \rangle.
                $$
				
				We define a map
                \begin{align*}
                    \kappa:Z^1(\overline{H}_O,\bZ^{n'+3}) \to Z^1(H,\Pic(\bar{X})),
                \end{align*}
                by
                \begin{align*}
                    &\kappa(\frac{1}{2}\sum_{i\in I}\overline{f}_i)=\frac{1}{2}\sum_{i \in I}f_i,\\
                    &\kappa(\frac{1}{2}(\sum_{i\in I}\overline{f}_i+\overline{f}_{n'+1}))=\frac{1}{2}(\sum_{i\in I}f_i+\sum_{i=n'+1}^nf_i).
                \end{align*}
                Obviously, we have
                \begin{align*}
                    &\kappa(\overline{f}_i)=f_i \in \bZ\langle f_1,\dots,f_n\rangle,\quad \forall 1\leq i\leq n',\\
                    &\kappa(\overline{f}_{n'+1})=\sum_{i=n'+1}^nf_i \in \bZ\langle f_1,\dots,f_n\rangle.
                \end{align*}
                With abuse of notation, define the map
				\begin{align*}
				\kappa:Z^1(\overline{H}_O,\mathbb{Z}^{n'+3})/\bZ \langle \overline{f}_1,\dots,\overline{f}_{n'+1}\rangle \to Z^1(H,\Pic(\bar{X}))/\bZ \langle f_1,\dots,f_n\rangle,
				\end{align*}
				and we show this map is injective.
    
				If there are two subsets $I,I'\subset \{1,\dots,n'\}$ such that they give nonzero elements in $Z^1(H,\Pic(\bar{X}))/\bZ\langle f_1,\dots,f_n\rangle$.
    
				There are two cases:
				\begin{enumerate}
					\item If
					\begin{align*}
					&\frac{1}{2}(\sum_{i \in I}f_i+\sum_{i=n'+1}^nf_i)-\frac{1}{2}(\sum_{i \in I'}f_i+\sum_{i=n'+1}^nf_i)\\
					=&\frac{1}{2}(\sum_{i \in I}f_i-\sum_{i \in I'}f_i) \in \bZ\langle f_1,\dots,f_n\rangle
                    \end{align*}
					By the same argument as in the previous case, we have
					\begin{align*}
					&\frac{1}{2}(\sum_{i \in I}\overline{f}_i+\overline{f}_{n'+1})-\frac{1}{2}(\sum_{i \in I'}\overline{f}_i+\overline{f}_{n'+1})\\
					=&\frac{1}{2}(\sum_{i \in I}\overline{f}_i-\sum_{i \in I'}\overline{f}_i) \in \bZ\langle \overline{f}_1,\dots,\overline{f}_{n'}\rangle
					\end{align*}

					\item By a similar argument in case $(b)$, we could find an element $h \in H$ of the form
                    $$
                    h=\beta_{(h,1)}\dots\beta_{(h,r_h)}\beta_{(h,r_h+1)}\dots\beta_{(h,R_h)}, 
                    $$
                    with $\sigma(\beta_{(h,1)}\dots\beta_{(h,r_h)})=-1$
                    and
                    \begin{align*}
                    P_O(h)=\overline{h}=\beta_{(h,1)}\dots\beta_{(h,r_h)}c_{n'+1}.
                    \end{align*}
                    We have $\sigma(\beta_{(h,r_h+1)}\dots\beta_{(h,R_h)})=-1$, thus among signed permutation cycles $\beta_{(h,r_h+1)},\dots,\beta_{(h,R_h)}$, there is a signed permutation cycle with one minus sign and
                    \begin{align*}
                        \frac{1}{2}\sum_{i=n'+1}^nf_{i,h} \notin \bZ \langle f_{n'+1,h},\dots, f_{n,h}\rangle,
                    \end{align*}
                    and as a consequence,
                    \begin{align*}
                        \frac{1}{2}\sum_{i=n'+1}^nf_i \notin \bZ \langle f_{n'+1},\dots,f_n\rangle,
                    \end{align*}
					which implies
                    \begin{align*}
					\frac{1}{2}(\sum_{i \in I}f_i+\sum_{i=n'+1}^nf_i-\sum_{i \in I'}f_i) \notin \bZ \langle f_1,\dots,f_n\rangle. 
					\end{align*}
				\end{enumerate}

				The map $\kappa$ is injective, and we have $\Lambda+1 \geq \bar{\Lambda}+1 \Longleftrightarrow \Lambda \geq \bar{\Lambda}$.
				
				Since $G$ satisfies (H1) condition, so does $\overline{G}_O$.
			\end{enumerate}
	\end{itemize}
\end{enumerate}
\end{proof}

\section{Building blocks and some general classes}
\label{sect:classes}

For those classes with name $*$, we don't find a general description of groups in the class. By Lemma \ref{lem:sylow2}, the Sylow 2-subgroup of $G$ controls the (H1) condition of group $G$.

If the Sylow 2-subgroup of $G$ is contained in some cyclic group, then by the proposition \ref{h1:cyclic}, we could easily check the (H1) condition. If the Sylow 2-subgroup of $G$ is abelian, we could use Proposition \ref{prop:h1bicyc} to check the (H1) condition. By corollary \ref{h1:cyclic} and Proposition \ref{prop:h1bicyc}, we verify the (H1) condition for all classes except classes 11, 18, 19, 22 and 24.

Next, we verify the (H1) condition for classes 11, 18, 22 and 24 below. Their Sylow 2-subgroups are $\mathfrak{D}_4$, and all proper subgroups of $\mathfrak{D}_4$ are either cyclic or bicyclic. From Proposition \ref{h1:cyclic} and their generators, they all have trivial group cohomology. We only need to verify
\begin{align*}
    \mathrm{H}^1(\mathfrak{D}_4,\Pic(\bar{X}))=0.
\end{align*}
with corresponding action of $\mathfrak{D}_4$ on $\Pic(\bar{X})$.

\begin{example}
\label{exam:caseD4}
    We compute 
    \begin{align*}
        \mathrm{H}^1(\mathfrak{D}_4,\Pic(\bar{X})),
    \end{align*}
    with different groups $\mathfrak{D}_4$ described in classes 11, 18, 22 and 24.
    \begin{itemize}
        \item [(11)]: The Sylow 2-subgroup $\mathfrak{D}_4$ is generated by $g_1,g_3$ where
        \begin{align*}
            &g_1=c_1\dots c_{2n+1}c_{2n+2}(2,3)\dots(2n,2n+1)(2n+2,2n+3),\\
            &g_3=(2n+2,2n+3).
        \end{align*}
        Under the action of $pr(\mathfrak{D}_4)$ on set $\{1,\cdots,2n+3\}$, there are $n+2$ orbits:
        \begin{align*}
            O_1=\{1\},\ O_i=\{2i-2,2i-1\},\quad i=2,\cdots,n+2.
        \end{align*}
        Over generator $g_1$, orbits $O_1$ and $O_{n+2}$ have signed permutation cycle with one minus sign, thus we only consider index sets
        \begin{align*}
            I_1=\{1,2n+2,2n+3\},I_i=\{2i-2,2i-1\},\quad i=2,\cdots,n+1,
        \end{align*}
        where $I_1$ is union of $O_1$ and $O_{n+2}$.

        Over generator $g_1$, index set $I_1$ has two signed permutation cycles with one minus sign, $c_1$ and $c_{2n+2}(2n+2,2n+3)$, then
        \begin{align*}
            \frac{1}{2}\sum_{i\in I_1}f_{i}\notin \bZ \langle f_1,\cdots,f_{2n+3}\rangle.
        \end{align*}

        For other index sets $I_j$ with $j=2,\cdots,n+1$, we have
        \begin{align*}
            \frac{1}{2}\sum_{i \in I_j}f_i=f_{2j-1} \in \bZ \langle f_1,\cdots,f_{2n+3} \rangle.
        \end{align*}

        Then we have
        \begin{align*}
            Z^1(\mathfrak{D}_4,\Pic(\bar{X}))/\bZ\langle f_1,\cdots,f_{2n+3}\rangle=\bZ/2,
        \end{align*}
        where the nonzero element is induced by index set $I_1$.
        
        The element $f_{-1} \notin \bZ\langle f_1,\cdots,f_{2n+3}\rangle$,
        which implies
        \begin{align*}
            \bZ\langle f_{-1},\cdots,f_{2n+3}\rangle/\bZ\langle f_1,\cdots,f_{2n+3}\rangle=\bZ/2,
        \end{align*}
        and we have
        \begin{align*}
            \mathrm{H}^1(\mathfrak{D}_4,\Pic(\bar{X}))=0.
        \end{align*}
            
    \item [(18)]: The Sylow 2-subgroup $\mathfrak{D}_4$ is generated by $g_1,g_2,g_3$ where 
    \begin{align*}
        &g_1=c_1\dots c_{2n+1}c_{4n+3}(2,3)\dots(2n,2n+1),\\
	   &g_2=c_{2n+2}\dots c_{4n+2}c_{4n+3}(2n+3,2n+4)\dots(4n+1,4n+2),\\
	   &g_3=(1,2n+2)\dots (2n+1,4n+2).
    \end{align*}

    Under the action of $pr(\mathfrak{D}_4)$ on set $\{1,\cdots,4n+3\}$, there are $n+2$ orbits:
    \begin{align*}
        &O_1=\{1,2n+2\},\ O_{n+2}=\{4n+3\},\\
        &O_i=\{2i-2,2i-1,2n+2i-1,2n+2i\}, \quad i=2,\cdots,n+1,
    \end{align*}

    Over generator $g_1$, orbits $O_1$ and $O_{n+2}$ have signed permutation cycle with one minus sign, thus we only consider index sets
    \begin{align*}
        &I_1=O_1 \cup O_{n+2}=\{1,2n+2,4n+3\},\\
        &I_i=O_i=\{2i-2,2i-1,2n+2i-1,2n+2i\},\quad i=2,\cdots,n+1.
    \end{align*}

    Over generator $g_1$, index set $I_1$ has two signed permutation cycles with one minus sign, $c_1, c_{4n+3}$, then
    \begin{align*}
        \frac{1}{2}\sum_{i \in I_1}f_i \notin \bZ \langle f_1,\cdots,f_{4n+3}\rangle.
    \end{align*}
    For index sets $I_j$ with $j=2,\cdots,n+1$, we have
    \begin{align*}
        \frac{1}{2}\sum_{i \in I_j}f_i=f_{2j-2}+f_{2n+2j-1} \in \bZ \langle f_1,\cdots,f_{4n+3}\rangle.
    \end{align*}

    Then we have
        \begin{align*}
            Z^1(\mathfrak{D}_4,\Pic(\bar{X}))/\bZ\langle f_1,\cdots,f_{4n+3}\rangle=\bZ/2,
        \end{align*}
        where the nonzero element is induced by index set $I_1$.
        
        The element $f_{-1} \notin \bZ\langle f_1,\cdots,f_{4n+3}\rangle$, which implies
        \begin{align*}
            \bZ\langle f_{-1},\cdots,f_{4n+3}\rangle/\bZ\langle f_1,\cdots,f_{4n+3}\rangle=\bZ/2,
        \end{align*}
        and
        \begin{align*}
            \mathrm{H}^1(\mathfrak{D}_4,\Pic(\bar{X}))=0.
        \end{align*}

    \item [(22)]: The Sylow 2-subgroup $\mathfrak{D}_4$ is generated by $g_1,g_2,g_3$ where
    \begin{align*}
    	&g_1=c_1\dots c_{4n_1+2}(2,3)\dots(2n_1,2n_1+1)(2n_1+3,2n_1+4)\dots(4n_1+1,4n_1+2),\\
    	&g_2=c_{2n_1+2}\dots c_{4n_1+2n_2+3}(2n_1+3,2n_1+4)\dots(4n_1+1,4n_1+2)\\
    	&\times (4n_1+4,4n_1+5)\dots(4n_1+2n_2+2,4n_1+2n_2+3),\\
    	&g_3=(1,2n_1+2)\dots(2n_1+1,4n_1+2).
    \end{align*}

    Under the action of $pr(\mathfrak{D}_4)$ on set $\{1,\cdots,4n_1+2n_2+3\}$, there are $n_1+n_2+2$ orbits:
    \begin{align*}
        &O_1=\{1,2n_1+2\},\ O_{i}=\{2i-2,2i-1,2n_1+2i-1,2n_1+2i\},\quad i=2,\cdots,n_1+1,\\
        &O_{n_1+n_2+2}=\{4n_1+3\},\ O_{n_1+i+1}=\{4n_1+2i+2,4n_2+2i+3\},\quad i=1,\cdots,n_2.
    \end{align*}

    Over generator $g_2$, orbits $O_{1}, O_{n_1+n_2+2}$ have signed permutation cycles with one minus sign, thus we only consider index sets,
    \begin{align*}
        I_1=O_{1} \cup O_{n_1+n_2+2},\ I_i=O_{i}\quad i=2,\cdots,n_1+n_2+1.
    \end{align*}
    
    Over generator $g_1$, index set $I_1$ has two signed permutation cycles with one minus sign $c_1, c_{2n_1+2}$, then
    \begin{align*}
        \frac{1}{2}\sum_{i \in I_1}f_i \notin \bZ \langle f_1,\cdots,f_{4n_1+2n_2+3}\rangle
    \end{align*}

    For index sets $I_j$ with $j=2,\cdots,n_1+1$, we have
    \begin{align*}
        \frac{1}{2}\sum_{i \in I_j}f_i=f_{2j-2}+f_{2n_1+2j-1} \in \bZ \langle f_1,\cdots,f_{4n_1+2n_2+3}\rangle,
    \end{align*}
    and for index sets $I_{n_1+j+1}$ with $j=1,\cdots,n_2$, we have
    \begin{align*}
        \frac{1}{2}\sum_{i \in I_{n_1+j+1}}f_i=f_{4n_1+2j+2} \in \bZ \langle f_1,\cdots,f_{4n_1+2n_2+3}\rangle.
    \end{align*}

    Then we have
        \begin{align*}
            Z^1(\mathfrak{D}_4,\Pic(\bar{X}))/\bZ\langle f_1,\cdots,f_{4n_1+2n_2+3}\rangle=\bZ/2,
        \end{align*}
        where the nonzero element is induced by index set $I_1$.
        
        The element $f_{-1} \notin \bZ\langle f_1,\cdots,f_{4n_1+2n_2+3}\rangle$, which implies
        \begin{align*}
            \bZ\langle f_{-1},\cdots,f_{4n_1+2n_2+3}\rangle/\bZ\langle f_1,\cdots,f_{4n_1+2n_2+3}\rangle=\bZ/2,
        \end{align*}
        and
        \begin{align*}
            \mathrm{H}^1(\mathfrak{D}_4,\Pic(\bar{X}))=0.
        \end{align*}
    \item [(24)]: The Sylow 2-subgroup $\mathfrak{D}_4$ is generated by $g_1,g_2,g_3$ which is a special case of the previous class, class (22), with $n_1=n_2=n$.
    \end{itemize}
\end{example}

\begin{example}
\label{exam:case19}
    This example verifies the group $G=\mathfrak{C}_2^2:\mathfrak{F}_{p^r}$ described in case (19), satisfies the (H1) condition.

    The Sylow 2-subgroup is contained in subgroup $\mathfrak{C}_2^2:\mathfrak{C}_{p^r-1}$ generated by $g_1,g_3,g_4$ where
    \begin{align*}
        &g_1=c_1\dots c_{p^r+1}(1,lab_p(\Gamma),\dots,lab_p(\Gamma^{p^r-2}))(p^r+1,p^r+2),\\
    	&g_3=(p^r+1,p^r+2),\\
    	&g_4=c_{p^r+1}c_{p^r+2}.
    \end{align*}

    and we have
    \begin{align*}
        &g_1^{p^r-1}=g_2^2=g_3^2=id,\\
       &g_3g_4=g_4g_3,\\
       &g_1g_3g_1^{-1}=g_4g_3,\\
       &g_1g_4g_1^{-1}=g_4.
    \end{align*}
    
    Every group element $g\in \mathfrak{C}_2^2:\mathfrak{C}_{p^r-1}$ can be written as
    \begin{align*}
        g=g_1^ig_3^jg_4^k,\ 0\leq i \leq p^r-2,\ j,k \in \{0,1\}.
    \end{align*}
    
    Write $g_1=c_1c_{p^r+1}(p^r+1,p^r+2)\cdot\beta$ where $\beta$ is a product of signed permutation cycles with an even number of minus signs.
    
    We have
    \begin{align*}
        g_1^i=
        \begin{cases}
            \beta^i,\ &i \equiv 0 \mod 4,\\
            c_1c_{p^r+1}(p^r+1,p^r+2)\beta^i,\ &i \equiv 1 \mod 4,\\
            c_{p^r+1}c_{p^r+2}\beta^i,\ &i \equiv 2 \mod 4,\\
            c_1c_{p^r+2}(p^r+1,p^r+2)\beta^i,\ &i \equiv 3 \mod 4.
        \end{cases}
    \end{align*}
    
    Analysis for the other 3 cases $g_1^ig_3,g_1^ig_4,g_1^ig_3g_4$ are similar.

    A subgroup $H \subseteq \mathfrak{C}_2^2:\mathfrak{C}_{p^r-1}$ can be presented as 
    \begin{align*}
        H =\langle g_1^{i_1}g_3^{j_1}g_4^{k_1},\cdots,g_1^{i_T}g_3^{j_T}g_4^{k_T}\rangle.
    \end{align*}
    
    From analysis above, depending on those $T$ generators of $H$, there are 2 possibilities:
    \begin{enumerate}
        \item [(1)] If all generators $g_1^{i_t}g_3^{j_t}g_4^{k_t}=\beta_{t}^{i_t}$ for $1\leq t\leq T$, then
        \begin{align*}
            Z^1(H,\Pic(\bar{X}))/\bZ \langle f_1,\cdots,f_{p^r+2} \rangle=0
        \end{align*}
        and
        \begin{align*}
            \mathrm{H}^1(H,\Pic(\bar{X}))=0
        \end{align*}
        \item [(2)] Otherwise, there is only one index set $I$ such that
        \begin{align*}
            \frac{1}{2}\sum_{i \in I}f_i \in Z^1(H,\Pic(\bar{X}))\setminus \bZ \langle f_1,\cdots,f_{p^r+2}),
        \end{align*}
        where the index set $I$ is one of the sets below,
        \begin{align*}
             \{1,p^r+1\},\{1,p^r+2\},\{p^r+1,p^r+2\},\{1,p^r+1,p^r+2\}.
        \end{align*}
        This element is the nonzero element of
        \begin{align*}
            Z^1(H,\Pic(\bar{X}))/ \bZ \langle f_1,\cdots,f_{p^r+2}) = \bZ/2
        \end{align*}
        We also have
        \begin{align*}
            f_{-1} \notin \bZ \langle f_1,\cdots,f_{p^r+2}\rangle,
        \end{align*}
        and thus
        \begin{align*}
            \bZ \langle f_{-1},f_1,\cdots,f_{p^r+2}\rangle/\bZ \langle f_1,\cdots,f_{p^r+2}\rangle =\bZ/2,
        \end{align*}
        which implies
        \begin{align*}
            \mathrm{H}^1(H,\Pic(\bar{X}))=0.
        \end{align*}
    \end{enumerate}
    The group $\mathfrak{C}_2^2:\mathfrak{C}_{p^r-1}$ satisfies $\mathrm{(H1)}$ condition, so does the Sylow 2-subgroup of $G$ and so does the group $G$ described in class (19) below.
\end{example}

Up to conjugation, there are two classes of groups that form the building blocks of other classes:

\begin{itemize}
	\item [(1):] $\mathfrak D_{2n+1}$: the dihedral group of order $2(2n+1)$, with generators
	\begin{align*}
	g_1&=c_1c_2c_3\ldots c_{2n+2}(2,3)(4,5)\dots(2n,2n+1),\\
	g_2&=(1,2,4,\dots,2n,2n+1,2n-1,\dots,3).
	\end{align*}
	It has 2 orbits of length: $(2n+1)+1$.
        \newline

	\item[(2):] $\mathfrak F_{p^r}$: The Frobenius group of order $p^r(p^r-1)$, where $p$ is a prime integer greater than $2$.
	$$
	 \mathfrak F_{p^r}\cong AGL(1,p^r) \cong \mathbb{F}_{p^r} \rtimes \mathbb{F}^*_{p^r}.
	$$
	We may assume that $\mathbb{F}_{p^r}=(\mathbb{Z}/p)[x]/(h(x))$, 
	where $h$ is a degree $r$ irreducible monic polynomial, pick a representative of every class as $a_0+a_1x+\ldots+a_{r-1}x^{r-1}, a_i \in \{0,\ldots,p-1\}$ and label every class by 
	\[
	lab_p([a_0+a_1x+\ldots+a_{r-1}x^{r-1}])=
	\begin{cases}
	p^r, \quad \text{if }a_0=a_1=\ldots=a_{r-1}=0\\
	a_0+a_1p+\ldots+a_{r-1}p^{r-1}, \quad \text{otherwise}
	\end{cases}
	\]
	 
	Choosing a generator $\Gamma$ of the cyclic group $\mathbb{F}_{p^r}^*$. The order $p^r-1$ element of $\mathfrak{F}_{p^r}$ can by written as
	\[
	g_1=
	c_1\dots c_{p^r-1}c_{p^r}c_{p^r+1}
	\begin{pmatrix}
	1 & lab_p(\Gamma) & \dots & lab_p(\Gamma^{p^r-3}) & lab_p(\Gamma^{p^r-2})\\
	lab_p(\Gamma) & lab_p(\Gamma^2) & \dots & lab_p(\Gamma^{p^r-2}) & 1
	\end{pmatrix}.
	\]
	The order $p$ elements in $\mathfrak{F}_{p^r}$ has the form
	\begin{align*}
	g_2=
	\begin{pmatrix}
	p^r & 1 & \dots & p-2 & p-1\\
	1 & 2 & \dots & p-1 & p^r
	\end{pmatrix}
	\begin{pmatrix}
	p & p+1 & \dots & 2p-2 & 2p-1\\
	p+1 & p+2 & \dots & 2p-1 & p
	\end{pmatrix}\\
	\cdots
	\begin{pmatrix}
	p^r-p & p^r-p+1 & \dots & p^r-2 & p^r-1\\
	p^r-p+1 & p^r-p+2 & \dots & p^r-1 & p^r-p
	\end{pmatrix}.
	\end{align*}
 
	These two elements generate $\mathfrak{F}_{p^r}$.
	
	It has 2 orbits of length: $p^r+1$.
\end{itemize}

\begin{example}
     For dihedral group $\mathfrak{D}_{2n+1}$ of order $2(2n+1)$, the orbits are as follows:
     \begin{figure}[H]
    		\centering
    		\includegraphics[width=80mm]{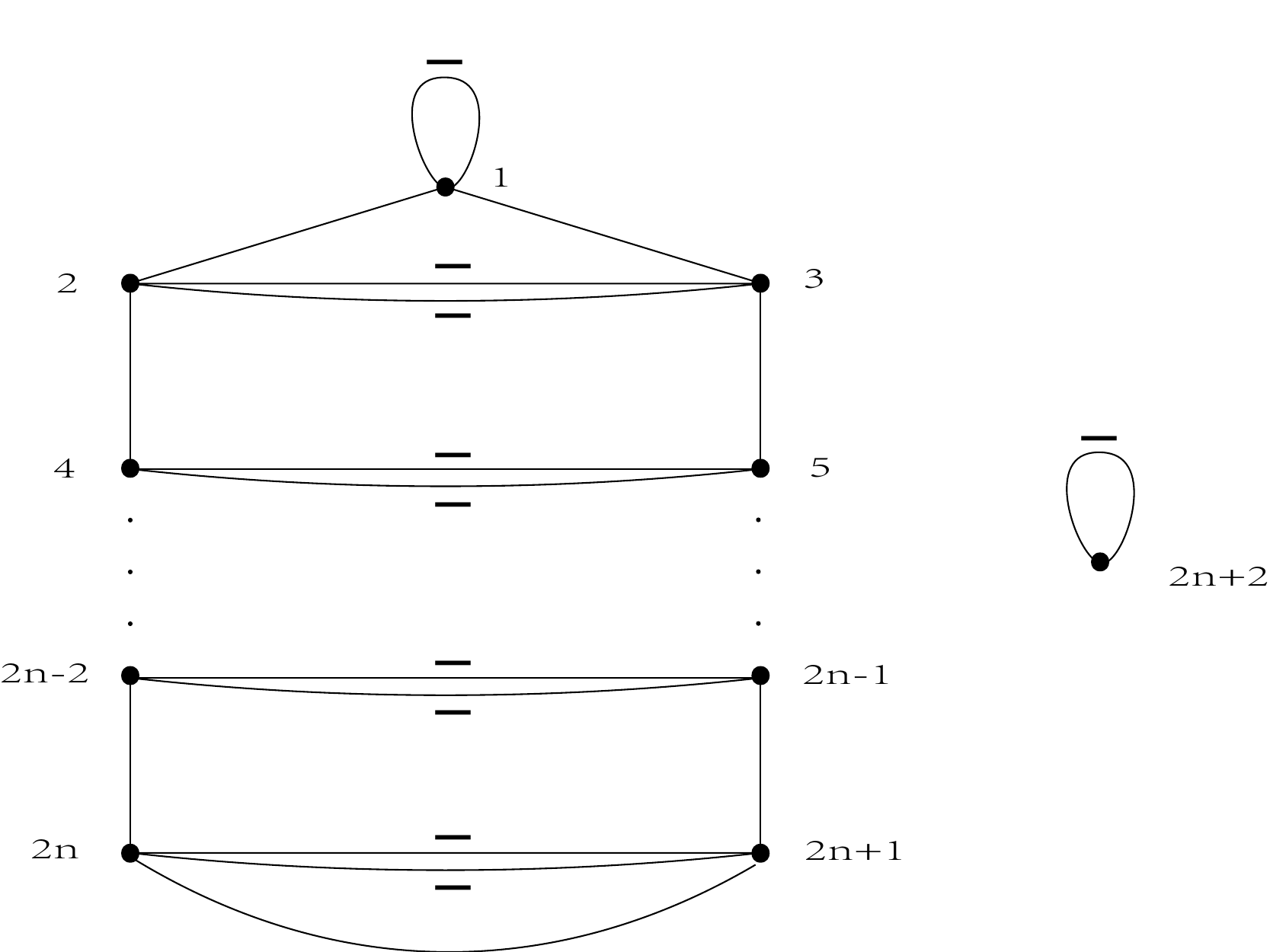}
    		\caption{Dihedral group $\mathfrak D_{2n+1}$ with orbits $(2n+1)+1$}
    	\end{figure}
    When $p=7, r=1$, and $\Gamma=3$, the group $\mathfrak F_7$ can be generated by
	\begin{align*}
	&g_1=c_1\ldots c_8(1,3,2,6,4,5),\\
	&g_2=(1,2,3,4,5,6,7).
	\end{align*}
    The orbits are as follows:
	\begin{figure}[H]
		\centering
		\includegraphics[width=80mm]{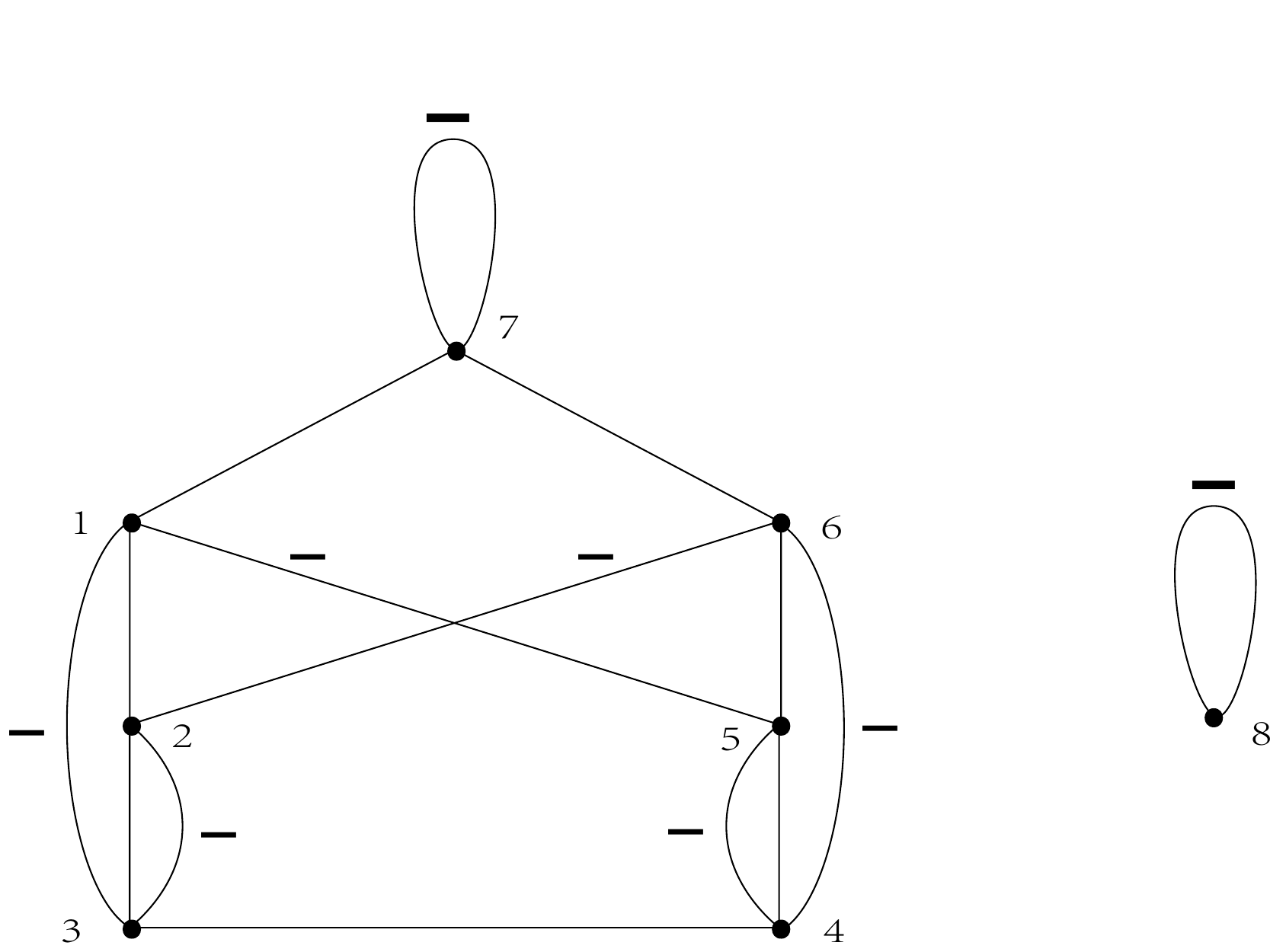}
		\caption{Frobenius group $\mathfrak F_7$ with orbits $7+1$}
	\end{figure}
    For these two classes of groups, their Sylow 2-subgroups are contained in the cyclic group $\langle g_1\rangle$, and they satisfy the (H1) condition and the relative minimality condition.  
\end{example}

\begin{theorem}
Let $G, G'$ be $\mathfrak{D}_{2n+1}$ or $\mathfrak{F}_{p^r}$, for positive integer $n,r$ and odd prime $p$, with generators $g_1,g_2$ for $G$ and $g_1',g_2'$ for $G'$, as above. If we identify the length one orbit of these groups, then $\mathfrak{D}_{2n_1+1}\times\mathfrak{D}_{2n_2+1}$ and $\mathfrak{D}_{2n+1}\times \mathfrak{F}_{p^r}$ satisfy $\mathrm{(H1)}$ 
and the relative minimality condition. 
\end{theorem}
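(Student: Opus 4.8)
The plan is to treat the group obtained by identifying the length-one orbits as an abstract direct product $G\times G'$ acting on $\{1,\dots,N\}$ with exactly three $pr$-orbits: the long orbit $O_1$ of $G$ (of size $2n_1+1$ or $p^r$), the long orbit $O_2$ of $G'$, and the single shared index $N$ arising from the two identified length-one orbits. I would first record that the generators $g_1,g_2$ of $G$ and $g_1',g_2'$ of $G'$ pairwise commute: $g_2,g_2'$ are disjoint permutations, while $g_1,g_1'$ overlap only in the flip $c_N$, and flips commute; since each factor acts faithfully on its own long orbit, the two copies intersect trivially, so the constructed group is genuinely $G\times G'$. There are two things to prove: the $\mathrm{(H1)}$ condition for every subgroup, and $\Pic(\bar X)^{G\times G'}=\mathbb Z l_0\oplus\mathbb Z K_X$.

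For $\mathrm{(H1)}$ I would reduce to the Sylow $2$-subgroup by Lemma \ref{lem:sylow2}. The Sylow $2$-subgroup of $\mathfrak D_{2n+1}$ is $\langle g_1\rangle\cong\mathbb Z/2$ (one checks $g_1^2=1$ using $\tau c_j\tau^{-1}=c_{\tau(j)}$ together with the fact that the permutation part of $g_1$ is an involution), while the Sylow $2$-subgroup of $\mathfrak F_{p^r}\cong\mathbb F_{p^r}\rtimes\mathbb F_{p^r}^{*}$ is cyclic, equal to a power $\gamma=g_1'^{\,m}$ with $m=(p^r-1)/2^{a}$ odd, since the normal part $\mathbb F_{p^r}$ has odd order and $\mathbb F_{p^r}^{*}$ is cyclic. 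Hence in both cases the Sylow $2$-subgroup of the product is the direct product of the (cyclic) Sylow $2$-subgroups of the factors, and in particular it is abelian. By Proposition \ref{prop:h1bicyc} it then suffices to show $\mathrm H^1(\langle g\rangle,\Pic(\bar X))=0$, equivalently $\Lambda(g)\in\{0,2\}$ by Proposition \ref{h1:cyclic}, for every element $g$ of this Sylow $2$-subgroup.

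The heart of the argument is this $\Lambda$-count, and it is where the oddness of $p$ and the precise form of $g_1'$ are essential. Write a Sylow element as $g=g_1^{\varepsilon}\gamma^{j}$. On $O_1$ the factor $g_1^{\varepsilon}$ contributes one signed cycle with $\sigma=-1$ (the lone $c_1$) when $\varepsilon=1$ and none when $\varepsilon=0$; at the shared index $N$ the net flip is $c_N^{\,\varepsilon+\delta}$ with $\delta=[\,j\text{ odd}\,]$, contributing $1$ to $\Lambda$ exactly when $\varepsilon+\delta$ is odd. The delicate part is the contribution of $\gamma^{j}=g_1'^{\,mj}$ on $O_2$. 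Modelling the long signed cycle of $g_1'$ as $\beta\colon e_i\mapsto -e_{i+1}$ (indices cyclic of length $p^r-1$), together with the extra fixed-point flip $c_{p^r}$, I would compute $\beta^{mj}\colon e_i\mapsto(-1)^{mj}e_{i+mj}$ and check that every cycle of $\beta^{mj}$ has $\sigma=+1$: for $j$ even, $\beta^{mj}$ is an honest permutation, while for $j$ odd each cycle has length $2^{a}$, so its total sign is $(-1)^{2^{a}}=+1$ (here $a\ge 1$ because $p^r-1$ is even). Thus $\beta^{mj}$ never contributes to $\Lambda$, and the only $O_2$-contribution is the flip $c_{p^r}$, present iff $j$ is odd. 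Adding the three contributions gives $\Lambda(g_1^{\varepsilon}\gamma^{j})=\varepsilon+\delta+[\,\varepsilon+\delta\text{ odd}\,]\in\{0,2\}$ in every case; the dihedral--dihedral case is the identical count with $\gamma^{j}$ replaced by $g_1'^{\,\varepsilon'}$ and $\delta=\varepsilon'$. Combining this with Proposition \ref{prop:h1bicyc} and Lemma \ref{lem:sylow2} yields $\mathrm{(H1)}$. I expect this signed-cycle bookkeeping to be the main obstacle: one must rule out a cycle of $\beta^{mj}$ carrying an odd number of minus signs, which would push some $\Lambda$ to $4$, and invoking only $\mathrm{(H1)}$ of the building blocks is \emph{not} sufficient for this — the explicit $2$-adic computation is needed.

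For relative minimality, let $v=[x_{-1},x_0,\dots]$ be $G\times G'$-invariant. Restricting the action to the subgroup $G=\langle g_1,g_2\rangle$, which fixes every coordinate in $O_2$ and whose action on $l_{-1}$ involves only $l_0$ and the indices it flips (all in $O_1\cup\{N\}$), I would observe that the sub-vector of $v$ on the coordinates $\{-1,0\}\cup O_1\cup\{N\}$ is invariant under the building block $\mathfrak D_{2n_1+1}$ (resp. $\mathfrak F_{p^r}$). By the relative minimality of the building blocks it lies in $\mathbb Z\hat l_0\oplus\mathbb Z\hat K$, forcing $x_{-1}=-2b$, $x_0=a-2b$, and every $l_i$-coordinate with $i\in O_1\cup\{N\}$ equal to $b$. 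The symmetric argument for $G'$ forces the coordinates on $O_2\cup\{N\}$ to equal some $b'$ with $x_{-1}=-2b'$, $x_0=a'-2b'$; matching the shared entries $x_{-1},x_0,x_N$ gives $a=a'$ and $b=b'$, so all $l_i$-coordinates coincide and $v=a\,l_0+b\,K_X$. Hence $\Pic(\bar X)^{G\times G'}=\mathbb Z l_0\oplus\mathbb Z K_X$, which completes the proof.
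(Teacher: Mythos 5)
Your proposal is correct and follows essentially the same route as the paper: reduce to the Sylow $2$-subgroup via Lemma \ref{lem:sylow2}, note that it lies in the abelian group $\langle g_1\rangle\times\langle g_1'\rangle$, check that every element there has $\Lambda\in\{0,2\}$ (Proposition \ref{h1:cyclic}), and conclude with Proposition \ref{prop:h1bicyc}. The paper states the last two steps in one line and calls relative minimality ``easy to check,'' whereas you carry out the signed-cycle $2$-adic count and the restriction-to-each-factor minimality argument explicitly; both verifications are accurate.
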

\begin{proof}
The Sylow 2-subgroup of $G \times G'$ is contained in bicyclic group $\langle g_1 \rangle \times \langle g_1'\rangle$ and for any group element in $\langle g_1 \rangle \times \langle g_1'\rangle$, it has trivial group cohomology. By Proposition \ref{prop:h1bicyc}, the Sylow 2-subgroup of $G \times G'$ satisfies (H1) condition.

The relative minimality condition is easy to check.
\end{proof} 

\begin{remark}
\label{rem:14}
Let groups $G,G'$ be defined as above, with generators $g_1,g_2$ and $g_1',g_2'$, acting on $n_1+1$, respectively $n_2+1$, pairs of degenerate fibers. We have $G \times G' \subset W(\mathsf{D}_{n_1+n_2+1})$. 
Consider the subgroup $H \subset G \times G'$ with 3 generators $g_1g_1',g_2,g_2'$.
Since $G \times G'$ satisfies the $\mathrm{(H1)}$ condition, so does $H$.
All three generators of $H$ have trivial action on length one orbit, so 
$H$ leaves the length one orbit invariant, thus it does not satisfy the relative minimality condition.

This means the group representation contains a trivial representation. After eliminating the trivial representation, the group $H$, considered as a subgroup of $W(\mathsf{D}_{n_1+n_2})$,
satisfies both $\mathrm{(H1)}$ and relative minimality conditions.
\end{remark}

From the theorem and remark above, there are 8 classes of groups satisfying the (H1) and relative minimality conditions.

Assume that $g_1$ is the generator of $\mathfrak{D}_{2n_1+1}$ having order $2$(or the generator of $\mathfrak{F}_{p_1^{r_1}}$ having order $p_1^{r_1}-1$) and $g_2$ is the generator of $\mathfrak{D}_{2n_1+1}$ having order $2n_1+1$(or of $\mathfrak{F}_{p_1^{r_1}}$ having order $p_1^{r_1}$). $g_1'$ and $g_2'$ are generators of group $\mathfrak{D}_{2n_2+1}$( or $\mathfrak{F}_{p_2^{r_2}}$).
\begin{itemize}
	\item [(3):]$\mathfrak{D}_{2n_1+1}\times \mathfrak{D}_{2n_2+1}$
	
	The group is generated by $g_1,g_2,g_1',g_2'$.
	
	It has 3 orbits of length:$(2n_1+1)+(2n_2+1)+1$.
        \newline
        
	\item [(4):]$\mathfrak{D}_{2n+1}\times \mathfrak{F}_{p^r}$
	
	The group is generated by $g_1,g_2,g_1',g_2'$.
	
	It has 3 orbits of length: $(2n+1)+p^r+1$.
        \newline
        \item[(5):]$\mathfrak{C}_{2n_1+1}\rtimes\mathfrak{D}_{2n_2+1}$
	
	The group is generated by $g_1g_1',g_2,g_2'$.
	
	It has 2 orbits of length: $(2n_1+1)+(2n_2+1)$.
	\newline
        \item[(6):]$\mathfrak{C}_{2n+1}:\mathfrak{F}_{p^r}$
	
	The group is generated by $g_1g_1',g_2,g_2'$.
	
	It has 2 orbits of length: $(2n+1)+p^r$.
	\newline
        \item[(7):]$\mathfrak{C}_{2n_1+1}\rtimes\mathfrak{D}_{2n_2+1}$ or $\mathfrak{D}_{2n_1+1}$
 
	The group is generated by $g_1g_1',g_2g_2'$.

	It has 2 orbits of length: $(2n_1+1)+(2n_2+1)$.

        If $2n_1+1$ and $2n_2+1$ are coprime, this class is the same as class $(5)$, if $2n_2+1$ divides $2n_1+1$, then the group is $\mathfrak{D}_{2n_1+1}$
        \newline
        
	\item[(8):]$*$
	
	The group is generated by $g_1g_1',g_2g_2'$.
	
	It has 2 orbits of length: $(2n+1)+p^r$.

        If $2n+1$ and $p$ are coprime, this class is the same as class $(6)$.
\end{itemize}

In the following, we give more classes of groups that satisfy both (H1) and relative minimality conditions.
\begin{itemize}
	\item [(9):]$\mathfrak{D}_{4n+2}$
	
	The group is generated by
	\begin{align*}
	&g_1=c_1\dots c_{2n+1}c_{2n+2}(2,3)\dots(2n,2n+1),\\
	&g_2=(1,2,4,\dots,2n,2n+1,2n-1,\dots,3),\\
	&g_3=c_{2n+2}c_{2n+3}.	
	\end{align*}
 
	The Sylow 2-subgroup is $\mathfrak{C}_2\times \mathfrak{C}_2$, generated by $g_1,g_3$.
	
	It has 3 orbits of length: $(2n+1)+1+1$.
	\newline
        
        \item[(10):]$\mathfrak{C}_{2n+1}:\mathfrak{C}_4$
	
	The group is generated by
	\begin{align*}
	&g_1=c_1\dots c_{2n+1}c_{2n+2}(2,3)\dots (2n,2n+1)(2n+2,2n+3),\\
	&g_2=(1,2,4,\dots,2n,2n+1,2n-1,\dots,3).
	\end{align*}
 
	The Sylow 2-subgroup is $\mathfrak{C}_4$, generated by $g_1$.
	
	It has 2 orbits of length: $(2n+1)+2$.
	\newline
        \item[(11):]$\mathfrak{C}_{2n+1}:\mathfrak{D}_4$
	
	The group is generated by
	\begin{align*}
	&g_1=c_1\dots c_{2n+1}c_{2n+2}(2,3)\dots(2n,2n+1)(2n+2,2n+3),\\
	&g_2=(1,2,4,\dots,2n,2n+1,2n-1,\dots,3),\\
	&g_3=(2n+2,2n+3).
	\end{align*}
 
	The Sylow 2-subgroup is $\mathfrak{D}_4$, generated by $g_1,g_3$.
	
	It has 2 orbits of length: $(2n+1)+2$.
        \newline
        
	\item [(12):]$\mathfrak{D}_{4n+2}$
	
	The group is generated by
	\begin{align*}
	&g_1=c_1\dots c_{4n+2}(2,3)\dots(2n,2n+1)(2n+3,2n+4)\dots(4n+1,4n+2),\\
	&g_2=(1,2,4,\dots,2n,2n+1,2n-1,\dots,3)\\
 &\quad \times(2n+2,2n+3,2n+5,\dots,4n+1,4n+2,4n,\dots,2n+4),\\
	&g_3=(1,2n+2)\dots(2n+1,4n+2).
	\end{align*}
 
	The Sylow 2-subgroup is $\mathfrak{C}_2\times\mathfrak{C}_2$, generated by $g_1,g_3$.
	
	It has 1 orbit of length: $(4n+2)$.
	\newline
 
	\item [(13):]$(\mathfrak{D}_{2n+1})^2$
	
	The group is generated by 
	\begin{align*}
	&g_1=c_1\dots c_{4n+2}(2,3)\dots(2n,2n+1)(2n+3,2n+4)\dots(4n+1,4n+2),\\
	&g_2=(1,2,4,\dots,2n,2n+1,2n-1,\dots,3),\\
	&g_3=(2n+2,2n+3,\dots,4n+1,4n+2,\dots,2n+4),\\
	&g_4=(1,2n+2)\dots(2n+1,4n+2).
	\end{align*}
 
	The Sylow 2-subgroup is $\mathfrak{C}_2\times\mathfrak{C}_2$, generated by $g_1,g_4$.
	
	It has 1 orbit of length: $(4n+2)$.
        \newline
        
	\item [(14):]$\mathfrak{F}_{p^r}$ or $\mathfrak{C}_2 \times \mathfrak{F}_{p^r}$
	
	The group is generated by
	\begin{align*}
	&g_1=c_1\dots c_{p^r-1}c_{p^r}c_{p^r+1}(1,lab_p(\Gamma),\dots,lab_p(\Gamma^{p^r-2}))(p^r+1,p^r+2),\\
	&g_2=(p^r,1,\dots,p-1)(p,\dots,2p-1)\dots(p^r-p,\dots,p^r-1).
	\end{align*}
 
	The Sylow 2-subgroup is contained in the cyclic group generated by $g_1$.
	
	It has 2 orbits of length: $p^r+2$.
    
        If $4$ divides $p^r$, then the group is $\mathfrak{F}_{p^r}$, otherwise, the group is $\mathfrak{C}_2 \times \mathfrak{F}_{p^r}$.
        \newline
        
	\item[(15):]$(\mathfrak{C}_{2n+1} \rtimes \mathfrak{D}_{2n+1}) \rtimes \mathfrak{C}_2$
	
	The group is generated by
	\begin{align*}
	&g_1=c_1\dots c_{2n+1}c_{4n+3}(1,2n+2)(2,2n+3,3,2n+4)\dots(2n,4n+1,2n+1,4n+2),\\
	&g_2=(1,2,4,\dots,2n,2n+1,2n-1,\dots,3),\\
	&g_3=(2n+2,2n+3,2n+5,\dots,4n+1,4n+2,4n\dots,2n+4).
	\end{align*}
 
	The Sylow 2-subgroup is $\mathfrak{C}_4$, generated by $g_1$.
	
	It has 2 orbits of length: $(4n+2)+1$.
	\newline
 
	\item[(16):]$\mathfrak{C}_2 \times \mathfrak{F}_{p^r}$ or $\mathfrak{C}_{p^r} \rtimes \mathfrak{C}_{2(p^r-1)}$
	
	The group is generated by
	\begin{align*}
	&g_1=c_1\dots c_{p^r-1}c_{p^r}c_{p^r+1}(1,lab_p(\Gamma),\dots,lab_p(\Gamma^{p^r-2}))(p^r+1,p^r+2),\\
	&g_2=(p^r,1,\dots,p-1)(p,\dots,2p-1)\dots(p^r-p,\dots,p^r-1),\\
	&g_3=c_{p^r+1}c_{p^r+2}.
	\end{align*}
 
	The Sylow 2-subgroup is contained in $\langle g_1\rangle \times \langle g_3 \rangle$.
	
	It has 2 orbits of length: $p^r+2$.

        If $4$ divides $p^r-1$, then the group is $\mathfrak{C}_2 \times \mathfrak{F}_{p^r}$, otherwise, the group is $\mathfrak{C}_{p^r} \rtimes \mathfrak{C}_{2(p^r-1)}$.
        \newline
        
	\item[(17):]$\mathfrak{C}_2\times \mathfrak{F}_{p^r}$
	
	The group is generated by
	\begin{align*}
	&g_1=c_1\dots c_{p^r+1}(1,lab_p(\Gamma),\dots,lab_p(\Gamma^{p^r-2})),\\
	&g_2=(p^r,1,\dots,p-1)(p,\dots,2p-1)\dots(p^r-p,\dots,p^r-1),\\
	&g_3=c_{p^r+1}c_{p^r+2}.
	\end{align*}
 
	The Sylow 2-subgroup is contained in $\langle g_1\rangle \times \langle g_3\rangle$.
	
	It has 3 orbits of length: $p^r+1+1$.
        \newline
        
	\item[(18):]$\mathfrak{D}_{2n+1}\wr \mathfrak{C}_2$
	
	The group is generated by
	\begin{align*}
	&g_1=c_1\dots c_{2n+1}c_{4n+3}(2,3)\dots(2n,2n+1),\\
	&g_2=c_{2n+2}\dots c_{4n+2}c_{4n+3}(2n+3,2n+4)\dots(4n+1,4n+2),\\
	&g_3=(1,2n+2)\dots (2n+1,4n+2),\\
	&g_4=(1,2,4,\dots,2n,2n+1,2n-1,\dots,3),\\
	&g_5=(2n+2,2n+3,2n+5,\dots,4n+1,4n+2,4n,\dots,2n+4).
	\end{align*}
 
	The Sylow 2-subgroup is $\mathfrak{D}_4$, generated by $g_1,g_2,g_3$.
	
	It has 2 orbits of length: $(4n+2)+1$.
        \newline
	\item[(19):]$\mathfrak{C}_2^2:\mathfrak{F}_{p^r}$
	
	The group is generated by
	\begin{align*}
	&g_1=c_1\dots c_{p^r+1}(1,lab_p(\Gamma),\dots,lab_p(\Gamma^{p^r-2}))(p^r+1,p^r+2),\\
	&g_2=(p^r,1,\dots,p-1)(p,\dots,2p-1)\dots(p^r-p,\dots,p^r-1),\\
	&g_3=(p^r+1,p^r+2),\\
	&g_4=c_{p^r+1}c_{p^r+2}.
	\end{align*}
 
	The Sylow 2-subgroup is contained in $\mathfrak{C}_2^2:\mathfrak{C}_{p^r-1}$, generated by $g_1,g_3,g_4$.
	
	It has 2 orbits of length: $p^r+2$.
        \newline
 
	\item[(20):]$(\mathfrak{C}_{2n_1+1}\times \mathfrak{C}_{2n_2+1}\times \mathfrak{C}_{2n_3+1}):\mathfrak{C}_2^2$
	
	The group is generated by
	\begin{align*}
	&g_1=c_1\dots c_{2n_1+2n_2+2}(2,3)\dots(2n_1,2n_1+1)(2n_1+3,2n_1+4)\dots(2n_1+2n_2+1,2n_1+2n_2+2),\\
	&g_2=c_{2n_1+2}\dots c_{2n_1+2n_2+2n_3+3}(2n_1+3,2n_1+4)\dots(2n_1+2n_2+1,2n_1+2n_2+2)\\
	&\times (2n_1+2n_2+4,2n_1+2n_2+5)\dots(2n_1+2n_2+2n_3+2,2n_1+2n_2+2n_3+3),\\
	&g_3=(1,2,4,\dots,2n_1,2n_1+1,2n_1-1,\dots ,3),\\
	&g_4=(2n_1+2,2n_1+3,2n_1+5,\dots,2n_1+2n_2+1,2n_1+2n_2+2,2n_1+2n_2,\dots,2n_1+4),\\
	&g_5=(2n_1+2n_2+3,2n_1+2n_2+4,\dots,2n_1+2n_2+2n_3+2,2n_1+2n_2+2n_3+3,\dots,2n_1+2n_2+5).
	\end{align*}
 
	The Sylow 2-subgroup is $\mathfrak{C}_2\times \mathfrak{C}_2$, generated by $g_1,g_2$.
	
	It has 3 orbits of length: $(2n_1+1)+(2n_2+1)+(2n_3+1)$.
        \newline
 
	\item[(21):]$\mathfrak{C}_{2n_1+1}^2:(\mathfrak{C}_{2n_2+1}:\mathfrak{C}_4)$
	
	The group is generated by
	\begin{align*}
	&g_1=c_{2n_1+2}\dots c_{4n_1+2n_2+3}(1,2n_1+2)(2,2n_1+3,3,2n_1+4)\dots(2n_1,4n_1+2,2n_1+1,4n_1+2)\\
	&\times(4n_1+4,4n_1+5)\dots(4n_1+2n_3+2,4n_1+2n_3+3),\\
	&g_2=(1,2,4\dots,2n_1,2n_1+1,2n_1-1,\dots,3),\\
	&g_3=(2n_1+2,2n_1+3,\dots,4n_1+1,4n_1+2,\dots, 2n_1+4),\\
	&g_4=(4n_1+3,4n_1+4,\dots,4n_1+2n_2+2,4n_1+2n_2+3,\dots,4n_1+5).
	\end{align*}
 
	The Sylow 2-subgroup is $\mathfrak{C}_4$, generated by $g_1$.
	
	It has 2 orbits of length: $(4n_1+2)+(2n_2+1)$.
        \newline
	\item[(22):]$(\mathfrak{C}_{2n_1+1}^2\times\mathfrak{C}_{2n_2+1}):\mathfrak{D}_4$
	
	The group is generated by
	\begin{align*}
	&g_1=c_1\dots c_{4n_1+2}(2,3)\dots(2n_1,2n_1+1)(2n_1+3,2n_1+4)\dots(4n_1+1,4n_1+2),\\
	&g_2=c_{2n_1+2}\dots c_{4n_1+2n_2+3}(2n_1+3,2n_1+4)\dots(4n_1+1,4n_1+2)\\
	&\times (4n_1+4,4n_1+5)\dots(4n_1+2n_2+2,4n_1+2n_2+3),\\
	&g_3=(1,2n_1+2)\dots(2n_1+1,4n_1+2),\\
	&g_4=(1,2,4,\dots,2n_1,2n_1+1,2n_1-2,\dots,3),\\
	&g_5=(2n_1+2,2n_1+3,\dots,4n_1+1,4n_1+2,\dots,2n_1+4),\\
	&g_6=(4n_1+3,4n_1+4,\dots,4n_1+2n_2+2,4n_1+2n_2+3,\dots,4n_1+5).
	\end{align*}
 
	The Sylow 2-subgroup is $\mathfrak{D}_4$, generated by $g_1,g_2,g_3$.
	
	It has 2 orbits of length: $(4n_1+2)+(2n_2+1)$.
	\newline
        \item[(23):]$\mathfrak{C}_{2n+1}^3:\mathfrak{C}_2^2:\mathfrak{C}_3$
	
	The group is generated by
	\begin{align*}
	&g_1=c_1\dots c_{4n+2}(2,3)\dots(2n,2n+1)(2n+3,2n+4)\dots(4n+1,4n+2),\\
	&g_2=c_{2n+2}\dots c_{6n+3}(2n+3,2n+4)\dots(4n+1,4n+2)(4n+4,4n+5)\dots(6n+2,6n+3),\\
	&g_3=(1,2,4,\dots,2n,2n+1,2n-1,\dots,3),\\
	&g_4=(2n+2,2n+3,2n+5,\dots,4n+1,4n+2,4n,\dots, 2n+4),\\
	&g_5=(4n+3,4n+4,4n+6,\dots,6n+2,6n+3,6n+1,\dots,4n+5),\\
	&g_6=(1,2n+2,4n+3)\dots(2n+1,4n+2,6n+3).
	\end{align*}
 
	The Sylow 2-subgroup is $\mathfrak{C}_2\times \mathfrak{C}_2$, generated by $g_1,g_2$.
	
	It has 1 orbit of length: $(6n+3)$.
	\newline
        \item[(24):]$\mathfrak{C}_{2n+1}^3:\mathfrak{D}_4:\mathfrak{C}_3$
	
	The group is generated by
	\begin{align*}
	&g_1=c_1\dots c_{4n+2}(2,3)\dots(2n,2n+1)(2n+3,2n+4)\dots(4n+1,4n+2),\\
	&g_2=c_{2n+2}\dots c_{6n+3}(2n+3,2n+4)\dots(4n+1,4n+2)(4n+4,4n+5)\dots(6n+2,6n+3),\\
	&g_3=(1,2n+2)\dots(2n+1,4n+2),\\
	&g_4=(1,2,4,\dots,2n,2n+1,2n-1,\dots,3),\\
	&g_5=(2n+2,2n+3,2n+5,\dots,4n+1,4n+2,4n,\dots, 2n+4),\\
	&g_6=(4n+3,4n+4,4n+6,\dots,6n+2,6n+3,6n+1,\dots,4n+5),\\
	&g_7=(1,2n+2,4n+3)\dots(2n+1,4n+2,6n+3).
	\end{align*}
 
	The Sylow 2-subgroup is $\mathfrak{D}_4$, generated by $g_1,g_2,g_3$.
	
	It has 1 orbit of length: $(6n+3)$.
\end{itemize}

\section{Numerical results}
\label{sect:num}
In this section, we report on {\tt Magma} computations to identify relevant subgroups of $W(\mathsf{D}_n)$ satisfying $\rm (H1)$ condition, for $n=4,\ldots, 9$. For more details on orbit decomposition and orbit stabilizer group, please refer to \cite{YangWeb-conic}. 

\begin{table}[H]
	\begin{tabular}{|c|c|c|c|c|c|}
		\hline
		\multicolumn{3}{|c|}{$W(\mathsf{D}_4)$} & \multicolumn{3}{c|}{$W(\mathsf{D}_5)$} \\ \hline
	Group Name & Class Number & Condition & Group Name & Class Number & Condition\\ \hline
	D4(1):$\mathfrak{S}_3$& 1 & $n=1$ & D5(1):$\mathfrak{D}_6$ & 9 & $n=1$\\ \hline
		&       &       & D5(2):$\mathfrak{C}_3:\mathfrak{C}_4$ & 10 & $n=1$\\ \hline
		&       &       & D5(3):$\mathfrak{C}_3:\mathfrak{D}_4$ & 11 & $n=1$\\ \hline
	\end{tabular}
\end{table}
\begin{table}[H]
    \small
	\begin{tabular}{|c|c|c|c|c|c|}
		\hline
		\multicolumn{3}{|c|}{$W(\mathsf{D}_6)$} & \multicolumn{3}{c|}{$W(\mathsf{D}_7)$} \\ \hline
	Group Name & Class Number & Condition & Group Name & Class Number & Condition\\ \hline
	D6(1):$\mathfrak{S}_3$ & 7 & $n_1=n_2=1$ & 
    D7(1):$\mathfrak{C}_5\rtimes \mathfrak{C}_4$ & 10 & $n=2$\\ \hline
    D6(2):$\mathfrak{D}_5$ & 1 & $n=2$ &
    D7(2):$\mathfrak{D}_{10}$ & 9 & $n=2$\\ \hline
	D6(3):$\mathfrak{D}_6$ & 12 & $n=2$ &
    D7(3):$\mathfrak{F}_5$ & 14 & $p=5,r=1$\\
  \hline
    D6(4):$\mathfrak{D}_6$ &       &      &
	D7(4):$\mathfrak{S}_3^2$ & 3 & $n_1=n_2=1$\\ \hline
    D6(5):$\mathfrak{C}_3\rtimes\mathfrak{S}_3$	& 5 & $n_1=n_2=1$ & 
	D7(5):$(\mathfrak{C}_3\rtimes \mathfrak{S}_3)\rtimes \mathfrak{C}_2$ & 15 & $n=1$\\ \hline
    D6(6):$\mathfrak{F}_5$ & 2 & $p=5,r=1$ & 
	D7(6):$\mathfrak{C}_2\times \mathfrak{F}_5$ & 17 & $p=5,r=1$\\ \hline
	D6(7):$\mathfrak{S}_4$ &       &      & D7(7):$\mathfrak{C}_5\rtimes \mathfrak{D}_4$ & 11 & $n=2$\\ \hline
	D6(8):$\mathfrak{S}_4$ &       &      & D7(8):$\mathfrak{C}_2\times \mathfrak{F}_5$ & 14 or 16 & $p=5,r=1$\\ \hline
	D6(9):$\mathfrak{S}_4$ &       &      & D7(9):$\mathfrak{S}_3\wr \mathfrak{C}_2$ & 18 & $n=1$\\ \hline
    D6(10): $\mathfrak{S}_4$ &     &     & D7(10):$\mathfrak{C}_2^2\rtimes \mathfrak{F}_5$ & 19 & $p=5,r=1$\\ \hline
	D6(11):$\mathfrak{S}_3^2$ & 13 & $n=1$ & & & \\ \hline
	D6(12):$\mathfrak{C}_2\times \mathfrak{S}_4$ &       &      &       &       &      \\ \hline
	D6(13):$\mathfrak{C}_2\times \mathfrak{S}_4$	&       &      &       &       &      \\ \hline
	D6(14):$\mathfrak{S}_5$	&       &      &       &       &      \\ \hline
	D6(15):$\mathfrak{S}_5$	&       &      &       &       &      \\ \hline
	\end{tabular}
\end{table}
\begin{table}[H]
    \footnotesize
	\begin{tabular}{|c|c|c|c|c|c|}
		\hline
		\multicolumn{3}{|c|}{$W(\mathsf{D}_8)$} & \multicolumn{3}{c|}{$W(\mathsf{D}_9)$} \\ \hline
	Group Name & Class Number & Condition & Group Name & Class Number & Condition\\ \hline
	D8(1):$\mathfrak{F}_7$ & 2 & $p=7,r=1$ & D9(1):$\mathfrak{C}_7\rtimes \mathfrak{C}_4$ & 10 & $n=3$\\ \hline
	D8(2):$\mathfrak{D}_{15}$	& 5 or 7 & $n_1=1,n_2=2$ & D9(2):$\mathfrak{D}_{14}$ & 9 & $n=3$\\ \hline
	D8(3):$\mathfrak{D}_7$ & 1 & $n=3$ & D9(3):$\mathfrak{D}_{14}\rtimes \mathfrak{C}_2$ & 11 & $n=3$\\ \hline
	D8(4):$\mathfrak{C}_3\rtimes \mathfrak{F}_5$ & 6 or 8 & $n=1,p=5,r=1$ & D9(4):$\mathfrak{S}_3\times \mathfrak{D}_5$ & 3 & $n_1=1,n_2=2$\\ \hline
		&       &       & D9(5):$\mathfrak{C}_7 \rtimes \mathfrak{C}_{12}$ & 16 & $p=7,r=1$\\ \hline
		&       &       & D9(6):$\mathfrak{C}_2\times \mathfrak{F}_7$ & 17 & $p=7,r=1$\\ \hline
		&       &       & D9(7):$\mathfrak{C}_3^3 \rtimes \mathfrak{C}_2^2$ & 20 & $n_1=n_2=n_3=1$\\ \hline
		&       &       & D9(8):$\mathfrak{C}_3^2\rtimes (\mathfrak{C}_3\rtimes \mathfrak{C}_4)$ & 21 & $n_1=n_2=1$\\ \hline
		&       &       & D9(9):$\mathfrak{S}_3\times \mathfrak{F}_5$ & 4 & $n=1,p=5,r=1$\\ \hline
		&       &       & D9(10):$\mathfrak{C}_2^2\rtimes \mathfrak{F}_7$ & 19 & $p=7,r=1$\\ \hline
		&       &       & D9(11):$\mathfrak{S}_3^2\rtimes \mathfrak{S}_3$ & 22 & $n_1=n_2=1$\\ \hline
		&       &       & D9(12):$\mathfrak{C}_3^3\rtimes \mathfrak{C}_2^2\rtimes \mathfrak{C}_3$ & 23 & $n=1$\\ \hline
		&       &       & D9(13):$\mathfrak{C}_3^3.\mathfrak{S}_4$ & 24 & $n=1$\\ \hline
	\end{tabular}
\end{table}

\begin{remark}
    The case D4(1): $\fS_3$ corresponds to the family of stably rational, nonrational conic bundles over $\bQ$ described in \cite{StabRatNonRat}.
\end{remark}

\section{Further Questions}
\begin{enumerate}
	\item We have D6(6):$\mathfrak{S}_4 \subset W(\mathsf{D}_6)$ and we guess that case D6(6) lies in the class generated by
	\begin{align*}
	&g_1=c_1\cdots c_{4n+2}(2,3)\cdots(2n,2n+1)(2n+3,2n+4)\cdots(4n+1,4n+2),\\
	&g_2=(1,2,\cdots,2n,2n+1,\cdots,3)(2n+2,2n+3,\cdots,4n+1,4n+2,\cdots,2n+4),\\
	&g_3=(1,2n+2)(2,2n+3)(4,2n+5)\cdots(2n,4n+1),\\
	&g_4=(1,2n+2)(3,2n+4)(5,2n+6)\cdots(2n+1,4n+2).
	\end{align*}
	
	When $n=1$, the group is $\mathfrak{S}_4$, the Sylow 2-subgroup is $\mathfrak{D}_4$. When $n=2$, the group is $\mathfrak{C}_2\times \mathfrak{C}_2^4\rtimes \mathfrak{D}_5$, the Sylow 2-subgroup is $\mathfrak{C}_2\times \mathfrak{C}_2^2\wr\mathfrak{C}_2$. When $n=3$, the group is $\mathfrak{C}_2^6.\mathfrak{D}_7$, the Sylow 2-subgroup is $\mathfrak{C}_2^6.\mathfrak{C}_2$.
	
	As we vary parameter $n$, the order of the Sylow 2-subgroup gets larger and the Sylow 2-subgroup has more complicated structure.
	
	We need more tools and theorems to show it satisfies the (H1) condition.
	
    \item Find more classes to contain the rest cases of $W(\sf D_6)$.
    \item Find stably $k$-rational but not $k$-rational conic bundle $X$ of degree $d=8-n \leq 3$ such that Galois group of the splitting field of $X$ is described in section \ref{sect:num}.
\end{enumerate}

\bibliographystyle{plain}
\bibliography{conic}
\end{document}